\documentclass[11pt]{article}
\usepackage[margin=1.0in]{geometry}
\usepackage{graphicx}
\usepackage{enumitem}
\usepackage{amsmath,amssymb,amsfonts,amsthm}
\usepackage{algorithm,algpseudocode}
\usepackage{array,longtable,booktabs}
\usepackage{url}
\usepackage{hyperref}
\usepackage{fancyhdr}
\usepackage{threeparttable}
\usepackage{multirow}
\usepackage{makecell}

\usepackage{microtype}
\newtheorem{theorem}{Theorem}
\newtheorem{lemma}[theorem]{Lemma}
\newtheorem{proposition}[theorem]{Proposition}

\theoremstyle{definition}

\theoremstyle{remark}

\numberwithin{equation}{section}

\allowdisplaybreaks

\setlist[enumerate,1]{label=(\roman*),font=\normalfont}

\title{The fixed-point bundle method over product-of-simplex domains arising from game equilibria}

\author{
  Hongbo Sun\thanks{
  Experiment codes and results are available at \url{https://github.com/shb20tsinghua/FiberBundle_VI}.}\\
  \texttt{shb20@tsinghua.org.cn}
}

\begin{document}
\date{}
\maketitle

\begin{abstract}
  This paper extends the fixed-point bundle framework for finite-dimensional variational inequalities (VIs) from the simplex domain to the product-of-simplex domain, which is directly applicable to solving Nash equilibria. The fixed-point bundle for VIs on the product-of-simplex domain reveals a composite fiber bundle structure. The key innovation is to construct an equivalent VI on the simplex domain and establish the equivalence between the two fixed-point bundle frameworks via a fiber bundle isomorphism. Exploiting this geometric equivalence, the predictor-corrector path-following algorithm for the VI on the product-of-simplex domain is shown to inherit the convergence guarantee of the simplex-domain framework, namely, global convergence with linear gap reduction near solutions. Numerical experiments on 5600 randomly generated instances with dimensions ranging from 2-player 128-action to 128-player 2-action demonstrate robust performance. The algorithm converges in every tested instance.
\end{abstract}

\begin{keywords}
  variational inequality, Nash equilibrium, interior point method, fiber bundle, path-following
\end{keywords}

\begin{MSCcodes}
  49J40, 90C33, 90C51, 91A06
\end{MSCcodes}

\tableofcontents

\section{Introduction}

Game equilibrium problems, introduced by Nash \cite{nash1951non}, are central in economics \cite{kreps1990game}, engineering \cite{han2012game}, artificial intelligence \cite{hazra2022applications}, etc. They describe noncooperative settings in which each agent selects a strategy to maximize its own utility given the strategies of others, and an equilibrium is a strategy profile from which no agent has a unilateral incentive to deviate. Under certain conditions, a game equilibrium problem can be equivalently reformulated as a variational inequality (VI) \cite{dual_gap}. Let $K\subseteq\mathbb{R}^n$ be a nonempty compact convex set (a convex body) and $H:K\to\mathbb{R}^n$ be a continuous function. The VI problem ${\rm VI}(H,K)$ asks for $x^\star\in K$ such that
$$\left\langle H(x^\star),x'-x^\star\right\rangle \geq0\quad {\rm for~all~} x'\in K.$$
When $K$ has a product structure such that $K=\prod_iK_i$, and $H$ is a block-wise gradient field such that $H=[H_i]=[\nabla_{x_i} f_i]$, the problem ${\rm VI}(H,K)$ is a game equilibrium problem with $K_i$ as each player's strategy set and $f_i$ as each player's payoff function.

In terms of iterative methods with asymptotic convergence, existing algorithms for game equilibrium problems are largely the same as those for VIs \cite{facchinei2007generalized}. VI algorithms fall into several categories, each with its application to game equilibrium problems. Projection \cite{belgioioso2018projected} and proximal point methods \cite{yi2018distributed} reformulate the equilibrium conditions as a fixed-point equation and iterate projections onto the feasible set. Merit-function approaches \cite{dreves2013globalized} convert the game into an optimization problem via merit functions and apply gradient or Newton updates. Interior point methods \cite{dreves2011solution} transform the game into a complementarity problem and follow a central path subject to the perturbed KKT system. Homotopy methods \cite{herings2010homotopy} deform a simple operator into the target game operator. Operator splitting methods \cite{yi2019operator} exploit decompositions such as sums of individual monotonic operators. Modern variants incorporate adaptive step sizes, inertial terms, or stochastic estimates for large-scale strategic interactions \cite{vinh2019inertial,franci2022stochastic}.

This paper mainly deals with VIs with the product-of-simplex domain, which is the canonical domain for mixed strategies of finite normal-form games. We extend the fixed-point bundle framework in Paper 1 \cite{mine} for VIs on simplex domain to VIs on product-of-simplex domain. In Paper 1, the fixed-point bundle framework formulates the solution space of a VI on simplex domain as a fiber bundle, where starting point selection, path-following, and singularity avoidance are systematically integrated. The path-following algorithm applies to VIs with general continuous functions and general compact convex domains after approximate reduction, and it guarantees global convergence with linear gap reduction near solutions. This paper establishes an equivalence between the fixed-point bundle framework over product-of-simplex domain and that over simplex domain, showing that the path-following algorithm with product-of-simplex domain has the same convergence result as in Paper 1.

The proposed approach begins by approximately reducing the general ${\rm VI}(H,K)$ to a smooth ${\rm VI}(F,\Delta_n^m)$ with domain $\Delta_n^m$ (product of $m$ $n$-dimensional simplices) and a real-analytic operator $F$, in correspondence to the VI with real-analytic operator and simplex domain that Paper 1 deals with.

The three equivalent characterizations in Paper 1 can be generalized to ${\rm VI}(F,\Delta_n^m)$ via the decomposable structure of the gap function of ${\rm VI}(F,\Delta_n^m)$. Then, we derive the fixed-point bundle of ${\rm VI}(F,\Delta_n^m)$, which turns out to have a composite structure on top of the ordinary fiber bundle structure.

An operator $\bar{F}$ can be constructed based on the homeomorphism $\phi:\Delta_n^m\times\Delta_m^\circ\to\Delta_{mn}^+$, such that ${\rm VI}(\bar{F},\Delta_{mn}^+)$ and ${\rm VI}(F,\Delta_n^m)$ have the same solutions. The central innovation of this paper is to establish the equivalence between the fixed-point bundle frameworks of ${\rm VI}(F,\Delta_n^m)$ and ${\rm VI}(\bar{F},\Delta_{mn}^+)$ via a fiber bundle isomorphism. However, there is a gap: $\bar{F}$ is only defined on $\Delta_{mn}^+$, instead of being a smooth map on the closed simplex $\Delta_{mn}$. Thus, we can only have the fixed-point bundle framework of ${\rm VI}(\bar{F},\Delta_{mn}^+)$ as a formality by substituting $\bar{F}$ into Paper 1, while the assertions concerning this framework do not directly transfer.

The predictor-corrector path-following algorithm for ${\rm VI}(F,\Delta_n^m)$ and ${\rm VI}(\bar{F},\Delta_{mn}^+)$ turns out to fix the variable on a specific slice given by $w_{\rm init}\in\Delta_m^\circ$. The map $\bar{F}$ restricted to this slice can be extended to a map $\tilde{F}$ real-analytic on the closed simplex $\Delta_{mn}$, such that all the results in Paper 1 apply to ${\rm VI}(\tilde{F},\Delta_{mn})$. Then, transferring from ${\rm VI}(\tilde{F},\Delta_{mn})$ to ${\rm VI}(\bar{F},\Delta_{mn}^+)$ and to ${\rm VI}(F,\Delta_n^m)$, the path-following algorithm for ${\rm VI}(F,\Delta_n^m)$ has the same convergence guarantee as in Paper 1.

In experiments, numerical experiments on 5600 randomly generated instances with dimensions ranging from 2-player 128-action to 128-player 2-action show that the algorithm succeeds in every case.

The remainder of the paper is organized as follows. Section 2 details the approximation reductions. Section 3 generalizes the three equivalent characterizations and the fixed-point bundle. Section 4 constructs a VI on simplex and establishes its equivalence to the VI on product-of-simplex. Section 5 transforms the predictor-corrector scheme and transfers convergence. Section 6 presents experimental results. Concluding remarks appear in Section 7.

For vectors, we use $a\circ b$ for componentwise multiplication and $a/b$ for componentwise division. Inequality $a\geq b$ is also interpreted componentwise. For matrix $A$, componentwise multiplication is interpreted as $A\circ b=A{\rm diag}(b)$ and $b\circ A={\rm diag}(b)A$. For tensors, $\mathbb{R}^{mn}$ is a vector space with dimension $mn$, $\mathbb{R}^{m\times n}$ is a tensor space with shape $m\times n$, and the operation ${\rm reshape}(\cdot)$ always represents switching between these two shapes. For block stacking, $[a_i]$ represents concatenating small vectors $a_i$ into a large vector, and ${\rm Diag}[x_iI]$ represents a block diagonal matrix. The operation ${\rm norm}([x_i])=[x_i]/(\sum_j x_j)$ denotes normalization. The operation $w\odot\pi$ is defined by $w\in\mathbb{R}^m$, $\pi\in\mathbb{R}^{m\times n}$, $w\odot\pi\in\mathbb{R}^{m\times n}$, and $(w\odot\pi)_i=w_i\pi_i$. Finally, $\mathbf{1}$ is the all-one vector; it sometimes has dimension $n$, sometimes dimension $mn$, but its dimension can always be inferred from context.

\section{Approximate reduction to analytic VI on simplex product}

Similar to Paper 1, we first introduce the approximate reduction from general ${\rm VI}(H,K)$ to ${\rm VI}(F,\Delta_n^m)$.

\begin{description}
  \item[Approximating convex bodies via simplex product]
\end{description}

In Paper 1, the fixed-point bundle method applies to VIs on general convex bodies based on the fact that any convex body can be inner-approximated arbitrarily well by polytopes, which are linear transformations of simplices. In fact, polytopes and simplices are in one-to-one correspondence via linear transformations when they have the same number of vertices. The linear transformation $x=\left[v_1,\dots,v_n\right]\sigma$ transforms a simplex $\Delta$ to a polytope with vertices $v_1,\dots,v_n$. In this paper, we need to approximate convex bodies with linear transformations of a simplex product, which yield the Minkowski sum of polytopes. The linear transformation
$$x=V{\rm reshape}(\pi)=\sum_{i=1}^m V_i\pi_i=\sum_{i=1}^m \left[v_{i1},\dots,v_{in}\right]\pi_i$$
transforms a simplex product $\Delta_n^m$ into a Minkowski sum of $m$ polytopes, where each summand has vertices $v_{i1},\dots,v_{in}$.

The Minkowski sum of polytopes is still a polytope, and if each summand polytope has $n_i$ vertices, the resulting polytope can have at most $\prod_in_i$ vertices. For instance, each simplex $\Delta_n$ has $n$ vertices, and their Cartesian product $\Delta_n^m$, which is a Minkowski sum in higher-dimensional space, has $n^m$ vertices. Thus, the Minkowski sum of polytopes can potentially lead to an enormous compression of the input vertex budget from $\prod_in_i$ to $\sum_in_i$, resulting in a dimension reduction for the fixed-point bundle method.

For inner approximation of general convex bodies, since the Minkowski sum of polytopes includes ordinary polytopes as single-summand cases, approximating convex bodies with a Minkowski sum of polytopes is trivially at least as good as approximating with an ordinary polytope. However, the caveat is that this is not true if a specific number of summands is required in the Minkowski sum. Even for a Minkowski sum of $n_i$-vertex polytopes, $\prod_i n_i$ is only an upper bound on its vertex number; its vertex number can still be less than $\sum_i n_i$.

There are indeed cases where approximation by Minkowski sums has a strict advantage. For $d$-dimensional convex bodies with some regularity condition, the approximation error using a polytope with $n$ vertices is $O(n^{-2/(d-1)})$ \cite{convex_body_approximation}. Then, it can be shown that for a $d$-dimensional convex body $K=K_1\times\dots\times K_m$ that is the Cartesian product of $d_i$-dimensional convex bodies $K_i$, the Minkowski sum of individual polytopes approximating $K_i$ has strictly smaller approximation error than directly approximating $K$ with ordinary polytopes under the same input vertex budget.

In this paper, we stop at the trivial guarantee that approximation with a linear image of a simplex product is at least as good as approximation with a linear image of a simplex. We do not study for which kinds of convex bodies the former has a strict advantage, and we do not develop a mechanism for discovering and exploiting nontrivial decomposition structures of a target convex body.

\begin{description}
  \item[Approximating continuous functions via analytic functions on simplex product]
\end{description}

The linear transformation $x=V{\rm reshape}(\pi)$ transforms the VI as follows.
$$\left\langle H(x),x'-x\right\rangle=\sum_i\left\langle H_i(\textstyle\sum_i V_i\pi_i),V_i\pi_i^\prime-V_i\pi_i\right\rangle=\sum_i\left\langle V_i^\top H_i(\textstyle\sum_i V_i\pi_i),\pi_i^\prime-\pi_i\right\rangle\geq0.$$
Similar to Paper 1, we then smooth the continuous mapping $F(\pi)={\rm reshape}(V^\top H(V{\rm reshape}(\pi)))$ using Dirichlet distribution averaging. The Dirichlet distribution ${\rm Dir}(\alpha)$ on the simplex $\Delta$ is given by
\begin{equation}
  {\rm Dir}(y;\alpha)=\frac{1}{B(\alpha)}\prod_{i=1}^n y_i^{\alpha_i-1},\quad\alpha>0,
\end{equation}
where $y\in\Delta$ is the variable, $\alpha>0$ is a vector of concentration parameters, and $B(\alpha)$ is the Beta function. We need a ${\rm Dir}(\alpha_i)$ for each $\pi_i\in\Delta_n$, so the map $F(\pi)$ is actually smoothed with the product Dirichlet distribution $\prod_i{\rm Dir}(\alpha_i)$. For each component $F_{i,a}$ of $F$, we define the following operation.
\begin{equation}
  \begin{aligned}
    (T_j F_{i,a})(\pi_1,\dots,\pi_m)= & \mathbb{E}_{Y_j\sim{\rm Dir}(\alpha_j)}\left[F_{i,a}(\pi_1,\dots,Y_j,\dots,\pi_m)\right] \\
    =                                 & \int_{\Delta_n} F_{i,a}(\pi_1,\dots,y_j,\dots,\pi_m){\rm Dir}(y_j;\alpha_j)\,dy_j        \\
  \end{aligned}
\end{equation}

Then, the product Dirichlet distribution smoothing of a continuous map $F:\Delta\to\mathbb{R}^n$ is the map $F_\epsilon:\Delta\to\mathbb{R}^n$ given componentwise by
\begin{equation}
  \label{equ_mollifi}
  F_{\epsilon,i,a}(\pi)=\mathbb{E}_{Y\sim\prod_{j=1}^m{\rm Dir}(\pi_j/\epsilon+c_j)}\left[F_{i,a}(Y)\right]=(T_1\circ\dots\circ T_m F_{i,a})(\pi),
\end{equation}
where $\epsilon>0$ is a scalar that measures approximation error, and $c_i>0$ is a constant vector ensuring $\pi_i/\epsilon+c_i>0$ for all $\pi_i\in\Delta_n$.

Following Paper 1, we then prove that $F_\epsilon(\pi)$ is real-analytic on $\Delta_n^m$. Instead of proving it from scratch, we reuse the analyticity result for single-variable Dirichlet distribution averaging in Paper 1. Then, Hartogs' theorem on separate holomorphy \cite{krantz2001function} from complex analysis can be used to extend this to analyticity for multiple-variable Dirichlet distribution averaging. This theorem states that for a function $g:U\times V\to\mathbb{C}$ on $U\subset\mathbb{C}^m$ and $V\subset\mathbb{C}^n$, if for every fixed $u\in U$, the map $v\mapsto g(u,v)$ is holomorphic on $U$, and for every fixed $v\in V$, the map $u\mapsto g(u,v)$ is holomorphic on $V$, then $g$ is holomorphic on $U\times V$.

\begin{proposition}
  \label{thm_analytic}
  For every $\epsilon>0$, $F_\epsilon(\pi)$ is real-analytic on $\Delta_n^m$.
\end{proposition}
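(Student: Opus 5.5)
The plan is to reduce the statement to the single-simplex analyticity result of Paper 1, applied one block $\pi_j$ at a time, and then use Hartogs' theorem to pass from separate to joint analyticity. By \eqref{equ_mollifi}, each component can be written as one integral against the product density,
$$F_{\epsilon,i,a}(\pi)=\int_{\Delta_n^m}F_{i,a}(y_1,\dots,y_m)\prod_{j=1}^m{\rm Dir}(y_j;\pi_j/\epsilon+c_j)\,dy_1\cdots dy_m .$$
Fubini applies because $F$ is continuous on the compact set $\Delta_n^m$ and the density is integrable. I would not view this as a function of $\pi$ only on $\Delta_n^m$. Instead I would regard it as a function of the concentration parameters $\alpha=(\alpha_1,\dots,\alpha_m)$, with $\alpha_j=\pi_j/\epsilon+c_j$, and complexify each $\alpha_j$ to a neighbourhood $U_j\subset\mathbb{C}^n$ of the compact set $\{\pi_j/\epsilon+c_j:\pi_j\in\Delta_n\}$, chosen so that ${\rm Re}\,\alpha_j>0$ throughout. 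Real-analyticity of $F_\epsilon$ on $\Delta_n^m$ then follows if $G(\alpha)=\int F_{i,a}(y)\prod_j{\rm Dir}(y_j;\alpha_j)\,dy$ is holomorphic on $U_1\times\dots\times U_m$, since $\pi\mapsto\alpha$ is affine.

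\textbf{Step 1 (separate holomorphy).} Fix all blocks except $\alpha_j$, taking the other $\alpha_k$ complex in $U_k$. Integrating over $y_j$ last, write
$$G=\int_{\Delta_n}g(y_j)\,{\rm Dir}(y_j;\alpha_j)\,dy_j ,$$
where $g(y_j)=\int F_{i,a}\prod_{k\ne j}{\rm Dir}(y_k;\alpha_k)\,dy_{-j}$. This $g$ is a bounded continuous (complex-valued) function of $y_j$. By dominated convergence, continuity holds because $|y^{\alpha-1}|=y^{{\rm Re}\,\alpha-1}$ is integrable uniformly for $\alpha$ in a compact subset of $U_k$. The single-variable analyticity result of Paper 1 says that Dirichlet averaging of a continuous function is analytic in the concentration parameter. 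Its proof, via holomorphy of $\alpha\mapsto y^{\alpha-1}$ together with dominated convergence/Morera and holomorphy and nonvanishing of $1/B(\alpha)$ for ${\rm Re}\,\alpha>0$, gives holomorphy of $\alpha_j\mapsto G$ on $U_j$. If Paper 1's result is stated only for real-valued $g$, I would apply it to ${\rm Re}\,g$ and ${\rm Im}\,g$ separately, or simply rerun its holomorphy argument, which is linear in $g$.

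\textbf{Step 2 (joint holomorphy).} $G$ is holomorphic in each block $\alpha_j$ separately on the product domain $U_1\times\dots\times U_m$. Hartogs' theorem, applied inductively by grouping $(\alpha_1,\dots,\alpha_{j})$ against $\alpha_{j+1}$, gives joint holomorphy on the product domain. Restricting to real $\alpha$ and composing with the affine map $\pi\mapsto(\pi_j/\epsilon+c_j)_j$ yields real-analyticity of each $F_{\epsilon,i,a}$ on a neighbourhood of $\Delta_n^m$, hence of $F_\epsilon$.

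The main obstacle is Step 1. Paper 1's lemma is stated for real parameters and real continuous functions. Hartogs, however, requires separate holomorphy in genuinely complex variables, with the remaining variables also allowed to be complex. So the key point is that the single-block argument remains valid when the fixed blocks carry complex concentration parameters. This reduces to the uniform integrability bound $|{\rm Dir}(y;\alpha)|\le |B(\alpha)|^{-1}\prod y_i^{{\rm Re}\,\alpha_i-1}$ on compact subsets of $\{{\rm Re}\,\alpha>0\}$, which I would state explicitly. A cleaner alternative, if that bookkeeping gets heavy, is to bypass Hartogs altogether. One proves joint holomorphy of the product density directly, via Morera in each variable plus continuity, which by Osgood's lemma suffices. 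Continuity of $G$ is immediate from dominated convergence with the same bound.
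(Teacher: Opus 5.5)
Your proposal is correct and follows essentially the same route as the paper. Both arguments establish continuity of the partially averaged integrand in the remaining block, then use Paper 1's single-simplex holomorphy result to get separate holomorphy in each block with the other blocks held at complex values, and finally invoke Hartogs' theorem for joint holomorphy. Your explicit bound $|{\rm Dir}(y;\alpha)|\le|B(\alpha)|^{-1}\prod_i y_i^{{\rm Re}\,\alpha_i-1}$ is a worthwhile refinement: the paper's continuity estimate ends with $\int\prod_{j\neq k}{\rm Dir}(y_j;z_j/\epsilon+c_j)\,dy_j=1$ without a modulus on the density, which is only legitimate for real $z_j$, and your bound supplies the correct finite constant.
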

\begin{proof}
  In the proof of Proposition 1, it is shown that if $f(\sigma)$ is continuous on the simplex $\Delta$, then for every $\sigma\in\Delta$, $(Tf)(z)$ is holomorphic on the complex neighborhood $U_\sigma=\{z\in\mathbb{C}^n|\lVert z-\sigma\rVert\leq d\}$.

  In our case, for every $\pi\in\Delta_n^m$, we study $(T_1\circ\dots\circ T_m F_{i,a})(z_1,\dots,z_m)$ on the product complex neighborhood $U_\pi=\prod_j U_{\pi_j}$ with $U_{\pi_j}=\{z_j\in\mathbb{C}^n|\lVert z_j-\pi_j\rVert\leq d_j\}$. We denote $(y_k^\prime,y_{\hat{k}}):=(y_1,\dots,y_{k-1},y_k^\prime,y_{k+1},\dots,y_m)$ and $T_{\hat{k}}:=T_1\circ\dots\circ T_{k-1}\circ T_{k+1}\circ\dots\circ T_m$.

  First, the operators $T_j$ are commutative. The operation $(T_1\circ\dots\circ T_m F_{i,a})(z_1,\dots,z_m)$ is an integral of the continuous $F_{i,a}(y_1,\dots,y_m)$ over the compact region $\Delta_n^m$ against the product measure $\prod_{j=1}^m{\rm Dir}(y_j;z_j/\epsilon+c_j)dy_j$. By the Fubini-Tonelli theorem \cite{analytic}, if a multiple integral integrates a continuous integrand over a compact region, the order of integration can be interchanged. Thus, for every index $k$, we can denote $F_{\epsilon,i,a}=T_k\circ T_{\hat{k}}F_{i,a}$.

  Next, we prove separate holomorphy for every index $k$. Since $F_{i,a}(y)$ is continuous on the compact set $\Delta_n^m$, it is uniformly continuous, i.e., for any $\epsilon>0$ there exists $\delta>0$ such that whenever $\lVert y_k-y_k^\prime\rVert<\delta$, we have $\lvert F_{i,a}(y_k^\prime,y_{\hat{k}})-F_{i,a}(y_k,y_{\hat{k}})\rvert<\epsilon$. Then, given fixed $z_j\in U_{\pi_j}$ for $j\neq k$, we have
  \begin{align*}
    \lvert (T_{\hat{k}}F_{i,a})(y_k,z_{\hat{k}})-(T_{\hat{k}}F_{i,a})(y_k^\prime,z_{\hat{k}})\rvert\leq & \int_{\Delta_n^{m-1}} \lvert F_{i,a}(y_k^\prime,y_{\hat{k}})-F_{i,a}(y_k,y_{\hat{k}})\rvert\prod_{j\neq k}{\rm Dir}(y_j;z_j/\epsilon+c_j)\,dy_j \\
    <                                                                                                   & \epsilon\int_{\Delta_n^{m-1}}\prod_{j\neq k}{\rm Dir}(y_j;z_j/\epsilon+c_j)\,dy_j=\epsilon
  \end{align*}
  Thus, $(T_{\hat{k}}F_{i,a})(y_k,z_{\hat{k}})$ is (uniformly) continuous in $y_k$ on $\Delta$. Then, the analyticity result for single-variable Dirichlet distribution averaging in the proof of Proposition 1 in Paper 1 applies to $F_{\epsilon,i,a}=T_k(T_{\hat{k}}F_{i,a})$. We obtain that for every fixed $z_j\in U_{\pi_j}$ for $j\neq k$, the map $z_k\mapsto F_{\epsilon,i,a}(z_k,z_{\hat{k}})$ is holomorphic in $z_k$ on $U_{\pi_k}$.

  Then, Hartogs' theorem on separate holomorphy applies, and we obtain that $F_{\epsilon,i,a}(z)$ is holomorphic in $z$ on $U_\pi$. Since $\pi\in\Delta_n^m$ is arbitrary, it follows that $F_{\epsilon,i,a}(\pi)$ is real-analytic in $\pi$ on $\Delta_n^m$. The vector function $F_\epsilon(\pi)$ is real-analytic in $\pi$ on $\Delta_n^m$ because each of its components indexed by $(i,a)$ is real-analytic.
\end{proof}

\section{The fixed-point bundle of VI on simplex product}

\subsection{Generalizing the three equivalent characterizations}

As in Paper 1, we deal with ${\rm VI}(F,\Delta_n^m)$ assuming $F$ is real-analytic on $\Delta_n^m$ throughout the following developments. In Paper 1, the analysis is built on the three equivalent characterizations of ${\rm VI}(\bar{F},\Delta)$. Thus, we begin by transforming these three equivalent characterizations to characterize ${\rm VI}(F,\Delta_n^m)$. First, we show that ${\rm VI}(F,\Delta_n^m)$ can be decomposed into several VIs on $\Delta_n$ via its gap function.
\begin{theorem}
  \label{thm_decompose}
  For every $\pi\in\Delta_n^m$, we have
  \begin{equation}
    {\rm gap}(\pi)=\sum_i{\rm gap}_i(\pi_i,\pi_{\hat{i}}),\quad{\rm where~}{\rm gap}_i(\pi_i,\pi_{\hat{i}}):=\sup_{\pi_i^\prime\in\Delta_n}\left\langle F_i(\pi_i,\pi_{\hat{i}}),\pi_i-\pi_i^\prime\right\rangle
  \end{equation}
  Consequently, for every $\pi\in\Delta_n^m$, ${\rm gap}(\pi)=0$ if and only if ${\rm gap}_i(\pi_i,\pi_{\hat{i}})=0$ for every index $i$.
\end{theorem}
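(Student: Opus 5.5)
The plan is to start from the standard gap function of ${\rm VI}(F,\Delta_n^m)$, namely
$${\rm gap}(\pi)=\sup_{\pi'\in\Delta_n^m}\left\langle F(\pi),\pi-\pi'\right\rangle,$$
and exploit two facts: the domain is a Cartesian product, and the inner product splits blockwise.

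First, I write $\pi'=(\pi_1',\dots,\pi_m')$ with each $\pi_i'\in\Delta_n$ ranging independently. The pairing then decomposes as
$$\langle F(\pi),\pi-\pi'\rangle=\sum_i\langle F_i(\pi_i,\pi_{\hat i}),\pi_i-\pi_i'\rangle.$$
In this sum the $i$-th term depends on $\pi'$ only through $\pi_i'$, while $\pi$ is held fixed.

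Second, the supremum of a sum of functions of independent variables over a product set equals the sum of the separate suprema. The inequality ``$\le$'' is immediate. For ``$\ge$'', each term is a continuous linear function of $\pi_i'$ on the compact set $\Delta_n$, so its supremum is attained at some maximizer $\pi_i^{\prime\star}$. The profile $(\pi_1^{\prime\star},\dots,\pi_m^{\prime\star})$ lies in $\Delta_n^m$ and achieves the sum of the suprema. This gives ${\rm gap}(\pi)=\sum_i{\rm gap}_i(\pi_i,\pi_{\hat i})$.

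For the consequence, I note that each ${\rm gap}_i\ge 0$, since choosing $\pi_i'=\pi_i$ makes the $i$-th term zero. A finite sum of nonnegative terms vanishes exactly when every term vanishes, which gives the ``if and only if''.

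No step is a real obstacle; the only care needed is justifying the exchange of supremum and sum, which compactness and attainment handle as described.
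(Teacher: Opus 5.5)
Your proof is correct and takes essentially the same route as the paper: split the pairing blockwise over the product domain, exchange the supremum with the sum, and conclude from nonnegativity of each ${\rm gap}_i$. The only difference is that the paper writes each block's supremum explicitly as $\pi_i^\top F_i(\pi)-\min_a F_{i,a}(\pi)$ (attained at a vertex of $\Delta_n$), whereas you justify the exchange through compactness and attainment.
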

\begin{proof}
  First, we derive ${\rm gap}(\pi)$ as follows.
  \begin{align*}
    {\rm gap}(\pi)= & \sup_{\pi'\in\Delta_n^m}\left\langle F(\pi),\pi-\pi'\right\rangle=\sum_i\left(\pi_i^\top F_i(\pi)-\min_a F_{i,a}(\pi)\right)     \\
    =               & \sum_i\sup_{\pi_i^\prime\in\Delta_n}\left\langle F_i(\pi),\pi_i-\pi_i^\prime\right\rangle=\sum_i{\rm gap}_i(\pi_i,\pi_{\hat{i}})
  \end{align*}
  Thus, we obtain ${\rm gap}(\pi)=\sum_i{\rm gap}_i(\pi_i,\pi_{\hat{i}})$.

  For every $\pi\in\Delta_n^m$, we have ${\rm gap}(\pi)\geq0$ and ${\rm gap}_i(\pi_i,\pi_{\hat{i}})\geq0$ by definition. Then, by ${\rm gap}(\pi)=\sum_i{\rm gap}_i(\pi_i,\pi_{\hat{i}})$, we have ${\rm gap}(\pi)=0$ if and only if ${\rm gap}_i(\pi_i,\pi_{\hat{i}})=0$ for every index $i$.
\end{proof}

In Theorem~\ref{thm_decompose}, ${\rm gap}_i(\pi_i,\pi_{\hat{i}})$ can be viewed as the gap function of a parametric variational inequality ${\rm VI}(F_i(\cdot,\pi_{\hat{i}}),\Delta_n)$ with $\pi_{\hat{i}}$ as the parameter. The theorem implies that $\pi$ is a solution of ${\rm VI}(F,\Delta_n^m)$ if and only if $\pi_i$ is a solution of ${\rm VI}(F_i(\cdot,\pi_{\hat{i}}),\Delta_n)$ for each index $i$.

Since ${\rm VI}(F_i(\cdot,\pi_{\hat{i}}),\Delta_n)$ is a VI on simplex, the three equivalent characterizations in Paper 1 apply to it. However, they are compatible over index $i$ only pointwise for individual $\pi\in\Delta_n^m$, at which point all the parameters $\pi_{\hat{i}}$ and variables $\pi_i$ across index $i$ come from the same $\pi$. In Paper 1, the three equivalent characterizations are indeed pointwise. Their definitions consist only of functions, equalities, and inequalities that are essentially pointwise objects assigning values to each individual $(\pi_i,\tau_i)$. The equivalence is also in pointwise form, stating that for each individual $(\pi_i,\tau_i)$, the properties of satisfying the three characterizations are equivalent. Thus, stacking the three equivalent characterizations of ${\rm VI}(F_i(\cdot,\pi_{\hat{i}}),\Delta_n)$ over index $i$ at each individual $\pi\in\Delta_n^m$, we obtain the generalization characterizing ${\rm VI}(F,\Delta_n^m)$. We briefly go through them since the generalization merely adds a subscript $i$ to those in Paper 1.

\noindent\textbf{Linear programming form.}
Equation~\eqref{equ_fpkkt} is the fixed-point KKT system (FPKKT). It consists of the perturbed KKT conditions~\eqref{equ_kkt} of a parametric linear programming problem and the fixed-point condition $\hat{\pi}=\pi$, where $\tau$ is the barrier parameter.
\begin{subequations}
  \label{equ_fpkkt}
  \begin{equation}
    \label{equ_kkt}
    \begin{bmatrix}
      \hat{\pi}_i\circ r_i-\tau_i           \\
      r_i-F_i(\pi)-v_i\mathbf{1}            \\
      \mathbf{1}^\top\hat{\pi}_i-\mathbf{1} \\
    \end{bmatrix}=0
  \end{equation}
  \begin{equation}
    \hat{\pi}=\pi
  \end{equation}
\end{subequations}

\noindent\textbf{Brouwer fixed-point form.}
Equation~\eqref{equ_brouwer} is the Brouwer function. It is given by the correspondence from $(\pi,\tau)$ to $(\hat{\pi},r,v)$ subject to perturbed KKT conditions~\eqref{equ_kkt}. It can be shown that this correspondence is a continuous function such that the Brouwer fixed-point theorem applies to it.
\begin{equation}
  \label{equ_brouwer}
  (\hat{\pi},r,v)=M(\pi,\tau),\quad{\rm s.t.~perturbed~KKT~conditions~\eqref{equ_kkt}}.
\end{equation}
The Brouwer function is computed by solving the bisection problem~\eqref{equ_bisection} for every $i$.
\begin{equation}
  \label{equ_bisection}
  q_i(v_i)=\mathbf{1}^\top\frac{\tau_i}{F_i(\pi)+v_i\mathbf{1}}-1,\quad v_i\in[-\min_a F_{i,a}(\pi),-\min_a F_{i,a}(\pi)+\mathbf{1}^\top\tau_i]
\end{equation}

\noindent\textbf{Mixed complementarity problem form.}
Equation~\eqref{equ_mcp} is the (barriered) mixed complementarity problem (MCP), which comes from FPKKT~\eqref{equ_fpkkt} by making the complementarity term $\pi_i\circ r_i$ the objective function. The $\tau=0$ case is the original problem, and the $\tau>0$ case is its barrier problem.
\begin{equation}
  \label{equ_mcp}
  \begin{aligned}
    \min_{(\pi,r,v)} \quad & \sum_i\pi_i^\top r_i-\tau_i^\top\ln\pi_i-\tau_i^\top\ln r_i \\
    \textrm{s.t.}\quad     & r_i=F_i(\pi)+v_i\mathbf{1}                                  \\
                           & \mathbf{1}^\top\pi_i=1                                      \\
                           & (\pi_i,r_i)\geq0                                            \\
  \end{aligned}
\end{equation}

\noindent\textbf{Equivalence.}
The three characterizations are equivalent in the sense that for each variable-barrier pair $(\pi,\tau)$, the properties of satisfying the three characterizations are equivalent.
\begin{theorem}
  For $(\pi,\tau)\in\Delta_n^m\times\{\tau\in\mathbb{R}^{m\times n}|\tau>0\}$, the following statements are equivalent.
  \begin{enumerate}
    \item $(\pi,\tau)$ satisfies FPKKT~\eqref{equ_fpkkt} for some $r,v$.
    \item $\pi$ is a fixed point such that $\hat{\pi}=\pi$ of the Brouwer function $(\hat{\pi},r,v)=M(\pi,\tau)$.
    \item $(\pi,\tau)$ satisfies $\check{\pi}=\pi$ and the perturbed KKT conditions of MCP~\eqref{equ_mcp}, where $\check{\pi}$ is the Lagrangian multiplier associated with the constraint $r\geq0$.
  \end{enumerate}
\end{theorem}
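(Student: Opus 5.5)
The plan is to reduce the statement to the single-simplex equivalence of Paper 1, applied blockwise. Everything in the three characterizations decouples over the index $i$ once $\pi$ is fixed. Fix $(\pi,\tau)$ with $\pi\in\Delta_n^m$ and $\tau>0$, and for each $i$ set $G_i:=F_i(\pi)\in\mathbb{R}^n$. With $\pi$ fixed, this is a constant vector. Then:

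\begin{itemize}
\item The perturbed KKT conditions~\eqref{equ_kkt} split into $m$ independent systems in the unknowns $(\hat{\pi}_i,r_i,v_i)$. Each is the perturbed KKT system of the linear program $\min\{G_i^\top\hat{\pi}_i-\tau_i^\top\ln\hat{\pi}_i : \mathbf{1}^\top\hat{\pi}_i=1\}$.
\item The fixed-point condition $\hat{\pi}=\pi$ is the conjunction of $\hat{\pi}_i=\pi_i$ over $i$.
\item The MCP~\eqref{equ_mcp} has a separable objective and separable constraints once the coupling through $F(\pi)$ is frozen at the given $\pi$. Its perturbed KKT conditions with respect to $(r,v)$ and the multiplier $\check{\pi}$ of $r\geq0$ therefore also split into $m$ blocks.
\end{itemize}

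The first step makes this decoupling precise. I will write each of the three conditions as a conjunction over $i$ of the corresponding condition for the single-simplex problem ${\rm VI}(F_i(\cdot,\pi_{\hat{i}}),\Delta_n)$, evaluated at the point $(\pi_i,\tau_i)$ with $\pi_{\hat{i}}$ equal to the components of the same $\pi$.

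The second step establishes well-definedness of the Brouwer function $M$ blockwise. For each $i$, the function $q_i$ in~\eqref{equ_bisection} is continuous and strictly decreasing on the stated interval. It is positive at the left endpoint, where some denominator vanishes and $q_i\to+\infty$, and nonpositive at the right endpoint, since each term is at most $\tau_{i,a}/\mathbf{1}^\top\tau_i$. Hence $q_i$ has a unique root $v_i$, and setting $r_i=G_i+v_i\mathbf{1}$ and $\hat{\pi}_i=\tau_i/r_i$ gives the unique solution of block $i$ of~\eqref{equ_kkt}. This is exactly the single-simplex argument of Paper 1 applied to the vector $G_i$. It shows that $M$ is a genuine function, so the statements are meaningful.

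The third step is the equivalences themselves.

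\textbf{(i)$\Leftrightarrow$(ii).} By the uniqueness just shown, any $(r,v)$ satisfying FPKKT~\eqref{equ_fpkkt} must coincide with $M(\pi,\tau)$. Conversely, a fixed point of $M$ supplies such $(r,v)$.

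\textbf{(i)$\Leftrightarrow$(iii).} Writing the Lagrangian of~\eqref{equ_mcp} with $\pi$ fixed in $F(\pi)$, stationarity with respect to $r_i$ gives $\pi_i-\tau_i/r_i-\check{\pi}_i+\lambda_i=0$, where $\lambda_i$ is the multiplier of the equality $r_i=F_i(\pi)+v_i\mathbf{1}$. Stationarity with respect to $v_i$ gives $\mathbf{1}^\top\lambda_i=0$, and the remaining conditions give the barrier complementarity. I will then invoke Paper 1's item (iii) for each block and stack the results over $i$. Since the conditions are conjunctions of block conditions, the equivalence for each block at $(\pi_i,\tau_i)$ yields the equivalence for $(\pi,\tau)$.

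I expect the main obstacle to be the third characterization. One must make sure that the MCP's perturbed KKT conditions are interpreted exactly as in Paper 1, that is, with $F(\pi)$ treated pointwise rather than differentiated, so that no cross-block Jacobian terms $\partial F_i/\partial\pi_j$ appear. I would check this first against Paper 1's derivation. If that holds, the equivalence reduces cleanly to the block statements.
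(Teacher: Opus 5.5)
Your overall route (fix $\pi$, decouple over $i$, and stack Paper 1's single-simplex equivalence) is exactly the paper's justification, which consists of nothing beyond that pointwise stacking. The one step you spell out for (iii), however, is set up inconsistently. In $\pi_i-\tau_i/r_i-\check{\pi}_i+\lambda_i=0$ the constraint $r\geq0$ is counted twice: once through the barrier gradient $-\tau_i/r_i$, and once through a separate multiplier $\check{\pi}_i$. Imposing $\check{\pi}_i=\pi_i$ there gives $\lambda_i=\tau_i/r_i>0$, which contradicts your $v_i$-stationarity $\mathbf{1}^\top\lambda_i=0$. So under your setup (iii) can never hold, and (i)$\Leftrightarrow$(iii) would fail. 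In the barrier problem the multiplier of $r\geq0$ is $\check{\pi}_i:=\tau_i/r_i$; equivalently, drop the $\ln r$ term and impose $\check{\pi}_i\circ r_i=\tau_i$. Then $r_i$-stationarity reads $\pi_i-\check{\pi}_i+\lambda_i=0$, so $\check{\pi}=\pi$ holds iff $\lambda=0$, and $\check{\pi}=\pi$ is literally $\pi_i\circ r_i=\tau_i$.

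This correction also removes the obstacle you flagged, so you need not freeze $F$. Keep $\pi$ as a variable and let $\nu_i$ be the multiplier of $\mathbf{1}^\top\pi_i=1$. Then $\pi_i$-stationarity is $r_i-\tau_i/\pi_i+\nu_i\mathbf{1}-\sum_j(\partial F_j/\partial\pi_i)^\top\lambda_j=0$. The cross-block Jacobian terms enter only through $\lambda$, so they vanish when $\check{\pi}=\pi$, and what remains forces $\nu_i=0$ since $r_i=\tau_i/\pi_i$. Conversely, an FPKKT solution gives a KKT point with $\lambda=0$, $\nu=0$, $\check{\pi}=\tau/r=\pi$.

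A smaller point for (i)$\Rightarrow$(ii): block $i$ of~\eqref{equ_kkt} by itself need not have a unique solution. The function $q_i$ can have further roots below $-\min_aF_{i,a}(\pi)$, giving $r_i$ and $\hat{\pi}_i$ of mixed sign. What you actually use is that $\hat{\pi}_i=\pi_i\geq0$ and $\tau_i>0$ force $r_i>0$, i.e.\ $v_i>-\min_aF_{i,a}(\pi)$. On that half-line $q_i$ is strictly decreasing, so the $v_i$ in any FPKKT solution is the bisection root.
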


Analogous to Paper 1, FPKKT~\eqref{equ_fpkkt} characterizes paths leading to solutions of ${\rm VI}(F,\Delta_n^m)$, the Brouwer function $(\hat{\pi},r,v)=M(\pi,\tau)$ guarantees the existence of paths subject to FPKKT~\eqref{equ_fpkkt} via the Brouwer fixed-point theorem, and MCP~\eqref{equ_mcp} characterizes the paths as its special central paths. It can be shown that these statements indeed hold for ${\rm VI}(F,\Delta_n^m)$, but we only use the formality and the equivalence of the three characterizations. In this paper, we intend to reuse the theoretical guarantees in Paper 1 by establishing an equivalence between ${\rm VI}(F,\Delta_n^m)$ and some ${\rm VI}(\bar{F},\Delta)$ to which Paper 1 applies, instead of directly transferring the theoretical guarantees onto ${\rm VI}(F,\Delta_n^m)$.

\subsection{The composite fixed-point bundle}

As in Paper 1, the solution space of FPKKT~\eqref{equ_fpkkt} can be constructed as a fiber bundle called the fixed-point bundle, which is the central geometric object of the path-following framework.

For every $\pi\in\Delta_n^m$, the admissible $\tau$ value satisfying FPKKT~\eqref{equ_fpkkt} can be expressed as
$$\tau_i=\pi_i\circ(F_i(\pi)+v_i\mathbf{1})=\pi_i\circ F_i(\pi)-(\pi_i^\top F_i(\pi))\pi_i+(v_i+\pi_i^\top F_i(\pi))\pi_i.$$
Then, letting $\tilde{\tau}_i(\pi):=\pi_i\circ F_i(\pi)-(\pi_i^\top F_i(\pi))\pi_i$ and $\bar{v}w_i:=v_i+\pi_i^\top F_i(\pi)$, part of the solution space of FPKKT~\eqref{equ_fpkkt} has the structure shown in equation~\eqref{equ_solution_space}.
\begin{equation}
  \label{equ_solution_space}
  \begin{aligned}
     & E=\bigcup_{\pi\in\Delta_n^m}\{\pi\}\times B(\pi)                                                      \\
     & B(\pi)=\bigcup_{w\in\Delta_m^\circ}\tilde{B}(\pi,w)                                                   \\
     & \tilde{B}(\pi,w)=\left\{\tilde{\tau}(\pi)+\bar{v}w\odot\pi|\bar{v}\in\mathbb{R}\setminus\{0\}\right\} \\
  \end{aligned}
\end{equation}

Note that $E$ is not the entire solution space of FPKKT~\eqref{equ_fpkkt}; it only contains $\tau$ such that $\mathbf{1}^\top\tau_i=\bar{v}w_i$ with $\bar{v}\neq0$ and $w$ ranging over $\Delta_m^\circ$, which indicates that the $\mathbf{1}^\top\tau_i$ have the same sign for all $i$. We denote this set as
$$\mathbb{R}^{m\times n}_{\pm}:=\{\tau\in\mathbb{R}^{m\times n}|(\mathbf{1}^\top\tau_i)(\mathbf{1}^\top\tau_j)>0,~\forall i,j\},$$
then for every $(\pi,\tau)\in E$, we have $\tau\in\mathbb{R}^{m\times n}_{\pm}$.

As equation~\eqref{equ_solution_space} shows, $E$ is a fiber bundle with projection $\alpha:E\to\Delta_n^m$ onto its base space $\Delta_n^m$ such that $\alpha((\pi,\tau))=\pi$, where the fiber over $\pi\in\Delta_n^m$ is $\alpha^{-1}(\pi)=\{\pi\}\times B(\pi)$. This resembles the fiber bundle structure in Paper 1. In our case, equation~\eqref{equ_solution_space} additionally shows a composite fiber bundle structure, such that each fiber $B(\pi)$ is itself a fiber bundle.

\begin{lemma}
  \label{thm_disjoint}
  For any $\pi\in\Delta_n^m$ and $w_1,w_2\in\Delta_m^\circ$, if $w_1\neq w_2$ then $\tilde{B}(\pi,w_1)\cap \tilde{B}(\pi,w_2)=\varnothing$.
\end{lemma}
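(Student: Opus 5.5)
Suppose a common element $\tau$ exists and derive $w_1=w_2$. The key invariant is the row-sum map $\tau\mapsto[\mathbf{1}^\top\tau_i]_i\in\mathbb{R}^m$, which annihilates $\tilde{\tau}(\pi)$ and recovers the scalar $\bar{v}$ together with the direction $w$.

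The first step is to compute $\mathbf{1}^\top\tilde{\tau}_i(\pi)$ for each block $i$. Since $\pi_i\in\Delta_n$, we have $\mathbf{1}^\top\pi_i=1$, so
\begin{equation*}
\mathbf{1}^\top\tilde{\tau}_i(\pi)=\pi_i^\top F_i(\pi)-(\pi_i^\top F_i(\pi))\,\mathbf{1}^\top\pi_i=0 .
\end{equation*}
For an element $\tau=\tilde{\tau}(\pi)+\bar{v}\,w\odot\pi\in\tilde{B}(\pi,w)$ this gives
\begin{equation*}
\mathbf{1}^\top\tau_i=\bar{v}\,w_i\,\mathbf{1}^\top\pi_i=\bar{v}\,w_i .
\end{equation*}
This is exactly the identity $\mathbf{1}^\top\tau_i=\bar{v}w_i$ noted after equation~\eqref{equ_solution_space}.

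Now suppose $\tau\in\tilde{B}(\pi,w_1)\cap\tilde{B}(\pi,w_2)$. Then $\tau=\tilde{\tau}(\pi)+\bar{v}_1w_1\odot\pi=\tilde{\tau}(\pi)+\bar{v}_2w_2\odot\pi$ with $\bar{v}_1,\bar{v}_2\neq0$. Applying the row-sum map gives $\bar{v}_1w_1=\bar{v}_2w_2$ as vectors in $\mathbb{R}^m$. Summing the components and using $\mathbf{1}^\top w_1=\mathbf{1}^\top w_2=1$ (both lie in $\Delta_m^\circ$) yields $\bar{v}_1=\bar{v}_2$. Since this common value is nonzero, we can divide by it to get $w_1=w_2$. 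This contradicts $w_1\neq w_2$, so the intersection is empty.

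There is no serious obstacle. The one point needing care is that $\bar{v}\neq0$ is essential: with $\bar{v}=0$ every $\tilde{B}(\pi,w)$ would contain $\tilde{\tau}(\pi)$, which is why $0$ is excluded in \eqref{equ_solution_space}. The argument also uses only that $\mathbf{1}^\top\pi_i=1$ and $\mathbf{1}^\top w=1$, not positivity of $w$. One should state explicitly that the row-sum map is linear and applied blockwise to $\tau\in\mathbb{R}^{m\times n}$.
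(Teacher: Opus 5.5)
Your proof is correct and follows the paper's argument: the paper cancels the common $\tilde{\tau}(\pi)$ term on both sides, applies $\mathbf{1}^\top$ blockwise to get $\bar{v}_1w_{1,i}=\bar{v}_2w_{2,i}$, sums over $i$ to obtain $\bar{v}_1=\bar{v}_2$, and divides by the nonzero $\bar{v}$ to conclude $w_1=w_2$. The only difference is cosmetic. Because that term cancels by subtraction, your computation $\mathbf{1}^\top\tilde{\tau}_i(\pi)=0$ is harmless but not needed.
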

\begin{proof}
  Suppose for $(\pi,w_1)$ and $(\pi,w_2)$, there exist $\bar{v}_1\neq0$ and $\bar{v}_2\neq0$ such that $\pi_i\circ F_i(\pi)+\bar{v}_1w_{1,i}\pi_i=\pi_i\circ F_i(\pi)+\bar{v}_2w_{2,i}\pi_i$ for every $i$. Then $\bar{v}_1w_{1,i}\pi_i=\bar{v}_2w_{2,i}\pi_i$. Multiplying by $\mathbf{1}^\top$ and summing over $i$, we have $\bar{v}_1w_{1,i}=\bar{v}_2w_{2,i}$ and $\bar{v}_1=\bar{v}_2$. Since $\bar{v}_1\neq0$ and $\bar{v}_2\neq0$, we have $w_1=w_2$. By contraposition, if $w_1\neq w_2$, we have $\tilde{B}(\pi,w_1)\cap \tilde{B}(\pi,w_2)=\varnothing$.
\end{proof}

Lemma~\ref{thm_disjoint} shows that for every $\pi$, $B(\pi)$ is indeed a fiber bundle. Given that $\bar{v}=0$ is excluded from $\tilde{B}(\pi,w)$, $B(\pi)$ is the disjoint union of $\tilde{B}(\pi,w)$ over $w\in\Delta_m^\circ$, where $\tilde{B}(\pi,w)$ are the fibers of $B(\pi)$. If $\bar{v}=0$ were not excluded, all the $\tilde{B}(\pi,w)$ over $w\in\Delta_m^\circ$ would share a common point $\tilde{\tau}(\pi)$. Then, $\tilde{B}(\pi,w)$ would no longer be fibers of $B(\pi)$, but $B(\pi)$ could still be fibers of $E$.

Therefore, $E$ in equation~\eqref{equ_solution_space} is a composite fiber bundle as equation~\eqref{equ_comp_fpbund} shows. We refer to this as the composite fixed-point bundle.
\begin{equation}
  \label{equ_comp_fpbund}
  \begin{aligned}
     & E\xrightarrow{\alpha_1}\Delta_n^m\times\Delta_m^\circ\xrightarrow{\alpha_2}\Delta_n^m \\
     & \alpha_1\left((\pi,\tau)\right)=(\pi,{\rm norm}([\mathbf{1}^\top\tau_i]))             \\
     & \alpha_2\left((\pi,w)\right)=\pi                                                      \\
  \end{aligned}
\end{equation}

There are three fiber bundles in the composite fixed-point bundle $E\to\Delta_n^m\times\Delta_m^\circ\to\Delta_n^m$.
\begin{itemize}
  \item Bundle $E\to\Delta_n^m\times\Delta_m^\circ$: Each fiber $\{\pi\}\times\tilde{B}(\pi,w)$ is a one-dimensional affine line spanned by $\bar{v}\in\mathbb{R}$ excluding a single point $(\pi,\tilde{\tau}(\pi))$.
        The projection map $\alpha_1$ projects each $(\pi,\tau)\in E$ to $(\pi,{\rm norm}([\mathbf{1}^\top\tau_i]))\in\Delta_n^m\times\Delta_m^\circ$.
  \item Bundle $\Delta_n^m\times\Delta_m^\circ\to\Delta_n^m$: Each fiber $\Delta_m^\circ$ is the interior of a simplex.
        The projection map $\alpha_2$ projects each $(\pi,w)\in\Delta_n^m\times\Delta_m^\circ$ to $\pi\in\Delta_n^m$.
  \item Bundle $E\to\Delta_n^m$: Each fiber $\{\pi\}\times B(\pi)$ is an $m$-dimensional affine cone spanned by $\bar{v}w$ with $w\in\Delta_m^\circ$ and $\bar{v}\in\mathbb{R}$ excluding a single point $(\pi,\tilde{\tau}(\pi))$.
        The projection map $\alpha_2\circ\alpha_1$ projects each $(\pi,\tau)\in E$ to $\pi\in\Delta_n^m$.
\end{itemize}

In short, $E\to\Delta_n^m\times\Delta_m^\circ$ has a large base space and small fibers, whereas $E\to\Delta_n^m$ has a small base space and large fibers. Each fiber $\{\pi\}\times B(\pi)$ of $E\to\Delta_n^m$ is the union of all the fibers $\{\pi\}\times\tilde{B}(\pi,w)$ of $E\to\Delta_n^m\times\Delta_m^\circ$ over $w\in\Delta_m^\circ$.

As in Paper 1, we then derive the fixed-point bundle equation and define two sections of the fiber bundle. The fixed-point bundle equation $G(\pi,\tau)=0$ is obtained by eliminating $r$ and $v$ in FPKKT~\eqref{equ_fpkkt} with the projection matrix $I-\pi_i\mathbf{1}^\top$, where $G(\pi,\tau)$ is as equation~\eqref{equ_g} shows.
\begin{equation}
  \label{equ_g}
  G_i(\pi,\tau)=\left(I-\pi_i\mathbf{1}^\top\right)\left(\pi_i\circ F_i(\pi)-\tau_i\right),\quad(\pi,\tau)\in\Delta_n^m\times\mathbb{R}^{m\times n}_{\pm}
\end{equation}

The two sections $\check{\tau}(\pi)$ and $\tilde{\tau}(\pi)$ are sections of the bundle $E\cup\{(\pi,\tilde{\tau}(\pi))|\pi\in\Delta_n^m\}\to\Delta_n^m$. The bundle $E\cup\{(\pi,\tilde{\tau}(\pi))|\pi\in\Delta_n^m\}\to\Delta_n^m$ adds back the point excluded by $\bar{v}\neq0$ in each fiber of $E\to\Delta_n^m$. Section $\check{\tau}(\pi)$ satisfies $\check{\tau}(\pi)\geq0$, and it characterizes the gap of ${\rm VI}(F,\Delta_n^m)$ such that ${\rm gap}_i(\pi_i,\pi_{\hat{i}})=\mathbf{1}^\top\check{\tau}_i(\pi)$. Section $\tilde{\tau}(\pi)$ satisfies $\mathbf{1}^\top\tilde{\tau}_i(\pi)=0$, and it governs the differential properties of the composite fixed-point bundle as will be discussed later.
\begin{equation}
  \label{equ_section}
  \begin{aligned}
    \check{\tau}_i(\pi) & =\pi_i\circ\left(F_i(\pi)-\left(\min_a F_{i,a}(\pi)\right)\mathbf{1}_a\right)                                                                   \\
    \tilde{\tau}_i(\pi) & =\pi_i\circ\left(F_i(\pi)-\left(\pi_i^\top F_i(\pi)\right)\mathbf{1}\right)=\left(I-\pi_i\mathbf{1}^\top\right)\left(\pi_i\circ F_i(\pi)\right) \\
  \end{aligned}
\end{equation}

\section{Equivalence between VI on simplex product and VI on simplex}

The equivalence between ${\rm VI}(F,\Delta_n^m)$ and ${\rm VI}(F_i(\cdot,\pi_{\hat{i}}),\Delta_n)$ in the last section relies on a fixed parameter $\pi_{\hat{i}}$, so the equivalence only holds pointwise for each individual $\pi$, and does not apply when $\pi$ varies or in global situations. In this section, we construct a ${\rm VI}(\bar{F},\Delta_{mn}^+)$ preserving the solutions of ${\rm VI}(F,\Delta_n^m)$, and establish the equivalence between their fixed-point bundle frameworks.

\subsection{Transforming \texorpdfstring{\({\rm VI}(F,\Delta_n^m)\)}{VI on simplex product} into \texorpdfstring{\({\rm VI}(\bar{F},\Delta_{mn}^+)\)}{VI on simplex}}

We begin by constructing a VI on simplex preserving the solutions of ${\rm VI}(F,\Delta_n^m)$. First, we lift ${\rm VI}(F,\Delta_n^m)$ to a VI defined on $\Delta_n^m\times\Delta_m^\circ$ with operator $\check{F}(\pi,w)=(F(\pi),0)$. Then, the gap function of ${\rm VI}(\check{F},\Delta_n^m\times\Delta_m^\circ)$ is derived as follows.
\begin{align*}
  {\rm gap}_{\check{F}}(\pi,w)= & \sup_{(\pi',w')\in\Delta_n^m\times\Delta_m^\circ}\left\langle(F(\pi),0),(\pi',w')-(\pi,w)\right\rangle \\
  =                             & \sup_{\pi'\in\Delta_n^m}\left\langle F(\pi),\pi'-\pi\right\rangle={\rm gap}(\pi)
\end{align*}
Thus, for any $w^\star\in\Delta_m^\circ$, $(\pi^\star,w^\star)$ is a solution of ${\rm VI}(\check{F},\Delta_n^m\times\Delta_m^\circ)$ if and only if $\pi^\star$ is a solution of ${\rm VI}(F,\Delta_n^m)$.

Viewing $\pi_i$ as conditional distributions and $w$ as a marginal distribution, we can transform $(\pi,w)$ into a joint distribution $\sigma$ that takes values on a simplex. In fact, this is a real-analytic homeomorphism $\phi:\Delta_n^m\times\Delta_m^\circ\to\Delta_{mn}^+$ defined as follows.
\begin{equation}
  \label{equ_phi_base}
  \begin{aligned}
    \phi(\pi,w)=\sigma,\quad        & {\rm s.t.}~\sigma_{i,a}=w_i\pi_{i,a}                                                                           \\
    \phi^{-1}(\sigma)=(\pi,w),\quad & {\rm s.t.}~(\pi_{i,a},w_i)=\left(\sigma_{i,a}/\textstyle\sum_a\sigma_{i,a},\textstyle\sum_a\sigma_{i,a}\right) \\
  \end{aligned}
\end{equation}
We denote
$$\Delta_{mn}^+=\{\sigma\in\Delta_{mn}|\textstyle\sum_a\sigma_{i,a}>0\}.$$
The map $\phi$ is a bijection between $\Delta_n^m\times\Delta_m^\circ$ and $\Delta_{mn}^+$ because $\phi^{-1}$ as defined is its inverse. Both $\phi$ and $\phi^{-1}$ are real-analytic because they consist only of elementary operations and the denominator $\sum_a\sigma_{i,a}>0$ on $\Delta_{mn}^+$. Thus, $\phi:\Delta_n^m\times\Delta_m^\circ\to\Delta_{mn}^+$ is a real-analytic homeomorphism.

Next, we can use the homeomorphism $\phi:\Delta_n^m\times\Delta_m^\circ\to\Delta_{mn}^+$ to transform ${\rm VI}(\check{F},\Delta_n^m\times\Delta_m^\circ)$ into a VI on $\Delta_{mn}^+$. This is performed using the standard procedure of changing the variable of a VI \cite{normal_cone}. For a general variational inequality ${\rm VI}(H,K)$, a standard theorem states that $x^\star\in K$ solves ${\rm VI}(H,K)$ if and only if $-H(x^\star)\in N_K(x^\star)$, where $N_K(x)=\{v\in\mathbb{R}^n|\langle v,x'-x\rangle\leq0,\forall x'\in K\}$ is the normal cone. Another standard theorem on change of variables for normal cones states that if $z=\psi(x)$ is a diffeomorphism between $K$ and $\psi(K)$, then $N_K(x)=(\partial\psi(x)/\partial x)^\top N_{\psi(K)}(\psi(x))$. Thus, if we define $\bar{H}(z)=(\partial\psi(x)/\partial x)^{-\top}H(x)~{\rm s.t.}~x=\psi(z)$, then $x^\star\in K$ is a solution of ${\rm VI}(H,K)$ if and only if $z^\star=\psi(x^\star)$ is a solution of ${\rm VI}(\bar{H},\psi(K))$, which is a transformation obtained by change of variables.

Note that $\bar{H}(z)=(\partial\psi(x)/\partial x)^{-\top}H(x)$ is equivalent to $\langle\bar{H}(z),dz\rangle=\langle H(x),dx\rangle$ given $z=\psi(x)$ and $dz=(\partial\psi(x)/\partial x)dx$. Then, we can use this equation to find $\bar{F}(\sigma)$, changing the variable of ${\rm VI}(\check{F},\Delta_n^m\times\Delta_m^\circ)$ based on $\sigma=\phi(\pi,w)$, which is derived as follows.
\begin{align*}
  \langle\bar{F}(\sigma),d\sigma\rangle                                                               & =\langle\check{F}(\pi,w),d(\pi,w)\rangle         \\
  \sum_i\left\langle\bar{F}_i(\sigma),w_i d\pi_i+\pi_i dw_i\right\rangle                              & =\sum_i\left\langle F_i(\pi),d\pi_i\right\rangle \\
  \sum_i\left\langle w_i\bar{F}_i(\sigma),d\pi_i\right\rangle+\sum_i(\pi_i^\top\bar{F}_i(\sigma))dw_i & =\sum_i\left\langle F_i(\pi),d\pi_i\right\rangle
\end{align*}
The coefficients of $d\pi_i$ and $dw_i$ must be identically $0$ for the equality to hold. Then, for every index $i$, $ w_i\bar{F}_i(\sigma)-F_i(\pi)=k\mathbf{1}$ and $\pi_i^\top\bar{F}_i(\sigma)=0$. Therefore, we arrive at the function $\bar{F}(\sigma)$ in equation~\eqref{equ_F_simplex}, and ${\rm VI}(\bar{F},\Delta_{mn}^+)$ is the transformation of ${\rm VI}(\check{F},\Delta_n^m\times\Delta_m^\circ)$ by changing the variable via $\sigma=\phi(\pi,w)$.
\begin{equation}
  \label{equ_F_simplex}
  \bar{F}_i(\sigma)=\frac{1}{w_i}\left(F_i(\pi)-(\pi_i^\top F_i(\pi))\mathbf{1}\right),\quad{\rm s.t.~}(\pi,w)=\phi^{-1}(\sigma).
\end{equation}

In fact, we can establish the equivalence between ${\rm VI}(\bar{F},\Delta_{mn}^+)$ and ${\rm VI}(F,\Delta_n^m)$ directly, without going through the two-step construction of lifting and change of variables.
\begin{theorem}
  \label{thmequ_gap}
  The gap functions of ${\rm VI}(\bar{F},\Delta_{mn}^+)$ and ${\rm VI}(F,\Delta_n^m)$ satisfy the following equation.
  \begin{equation}
    {\rm gap}_{\bar{F}}(\sigma)=\max_i\frac{{\rm gap}_i(\pi_i,\pi_{\hat{i}})}{w_i},\quad{\rm s.t.}~\sigma=\phi(\pi,w)
  \end{equation}
  Consequently, given $\sigma=\phi(\pi,w)$, ${\rm gap}_{\bar{F}}(\sigma)=0$ if and only if ${\rm gap}(\pi)=0$.
\end{theorem}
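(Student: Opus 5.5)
Fix $\sigma\in\Delta_{mn}^+$ and write $(\pi,w)=\phi^{-1}(\sigma)$, so that $\sigma_i=w_i\pi_i$ and $w\in\Delta_m^\circ$. The approach is to compute ${\rm gap}_{\bar{F}}(\sigma)=\sup_{\sigma'}\langle\bar{F}(\sigma),\sigma-\sigma'\rangle$ explicitly from the formula \eqref{equ_F_simplex} for $\bar{F}$. I take the supremum over the closed simplex $\Delta_{mn}$. This gives the same value as the supremum over $\Delta_{mn}^+$, because the objective is linear in $\sigma'$ and $\Delta_{mn}^+$ is dense in $\Delta_{mn}$.

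\textbf{Step 1: the term $\langle\bar{F}(\sigma),\sigma\rangle$ vanishes.} By construction $\pi_i^\top\bar{F}_i(\sigma)=0$ for every $i$. Hence
\[
\langle\bar{F}(\sigma),\sigma\rangle=\sum_i w_i\,\pi_i^\top\bar{F}_i(\sigma)=0.
\]
Equivalently, $\pi_i^\top\bigl(F_i(\pi)-(\pi_i^\top F_i(\pi))\mathbf{1}\bigr)=0$. Therefore
\[
{\rm gap}_{\bar{F}}(\sigma)=-\min_{\sigma'\in\Delta_{mn}}\langle\bar{F}(\sigma),\sigma'\rangle=-\min_{i,a}\bar{F}_{i,a}(\sigma),
\]
since a linear function is minimized over a simplex at a vertex.

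\textbf{Step 2: evaluate the minimum blockwise.} For a fixed block $i$,
\[
\min_a\bar{F}_{i,a}(\sigma)=\frac{1}{w_i}\Bigl(\min_a F_{i,a}(\pi)-\pi_i^\top F_i(\pi)\Bigr),
\]
where dividing by $w_i>0$ preserves the minimizer. From the proof of Theorem~\ref{thm_decompose}, $\pi_i^\top F_i(\pi)-\min_aF_{i,a}(\pi)={\rm gap}_i(\pi_i,\pi_{\hat{i}})$. So $-\min_a\bar{F}_{i,a}(\sigma)={\rm gap}_i/w_i$. Taking the minimum over $i$ of $\min_a\bar{F}_{i,a}$ turns into the maximum over $i$ of ${\rm gap}_i/w_i$, which gives the claimed formula.

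\textbf{Step 3: the consequence.} Each ${\rm gap}_i\ge 0$ and each $w_i>0$. Hence $\max_i{\rm gap}_i/w_i=0$ if and only if every ${\rm gap}_i=0$. By Theorem~\ref{thm_decompose}, this holds if and only if ${\rm gap}(\pi)=0$.

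The only delicate point is Step 1's handling of the domain. The set $\Delta_{mn}^+$ is not closed, and a vertex of $\Delta_{mn}$ lies in $\Delta_{mn}^+$ only if $m=1$. The supremum therefore may not be attained inside $\Delta_{mn}^+$, and I will state explicitly that it is computed by the linearity and density argument above. The rest is direct algebra.
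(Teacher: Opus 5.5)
Your proof is correct, and it takes a somewhat different route from the paper's.

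The paper pulls the competitor back through $\phi$, writing $\sigma'=\phi(\pi',w')$. It shows that $\langle\bar{F}(\sigma),\sigma'-\sigma\rangle=\sum_i (w_i'/w_i)\langle F_i(\pi),\pi_i'-\pi_i\rangle$. It then takes the inner supremum over each $\pi_i'$, which is legitimate because $w_i'/w_i>0$, and finally the outer supremum of a function linear in $w'$ over $\Delta_m^\circ$.

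You instead stay in $\sigma'$-coordinates. The identity $\sigma^\top\bar{F}(\sigma)=0$ together with vertex minimization gives ${\rm gap}_{\bar{F}}(\sigma)=-\min_{i,a}\bar{F}_{i,a}(\sigma)$, which is exactly $\mathbf{1}^\top\check{\mu}(\sigma)$ from \eqref{equ_section_simplex}. The blockwise minimum is then read off directly from \eqref{equ_F_simplex}.

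What each approach buys:
\begin{itemize}
\item The paper's computation presents the simplex VI as a $w$-weighted aggregation of the blockwise VIs, which matches its change-of-variables narrative.
\item Your version is shorter and ties directly to the later identity ${\rm gap}_{\bar{F}}(\sigma)=\mathbf{1}^\top\check{\mu}(\sigma)$.
\item You make the non-attainment issue explicit: the supremum over the non-closed $\Delta_{mn}^+$ is reached only in the closure. The paper has the same issue with its supremum over the open $\Delta_m^\circ$ but passes over it silently.
\item Your sign convention $\langle\bar{F}(\sigma),\sigma-\sigma'\rangle$ is consistent with the definition of ${\rm gap}$ in Theorem~\ref{thm_decompose}. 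The paper's displayed computation writes $\sigma'-\sigma$ and $\pi_i'-\pi_i$, which is a sign slip your route avoids.
\end{itemize}
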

\begin{proof}
  Using $(\pi,w)=\phi^{-1}(\sigma)$, we have the following deduction.
  \begin{align*}
      & \left\langle\bar{F}(\sigma),\sigma^\prime-\sigma\right\rangle=\sum_i\left\langle\frac{1}{w_i}\left(F_i(\pi)-(\pi_i^\top F_i(\pi))\mathbf{1}\right),w_i^\prime\pi_i^\prime-w_i\pi_i\right\rangle                                     \\
    = & \sum_i\left(\left\langle\frac{1}{w_i}F_i(\pi),w_i^\prime\pi_i^\prime-w_i\pi_i\right\rangle-\frac{\pi_i^\top F_i(\pi)}{w_i}(w_i^\prime-w_i)\right)=\sum_i\frac{w_i^\prime}{w_i}\left\langle F_i(\pi),\pi_i^\prime-\pi_i\right\rangle
  \end{align*}
  Then, the gap function of ${\rm VI}(\bar{F},\Delta_{mn}^+)$ is derived as follows. The second line follows from $w_i^\prime/w_i>0$ for every $i$.
  \begin{align*}
      & {\rm gap}_{\bar{F}}(\sigma)=\sup_{\sigma'\in\Delta_{mn}^+}\left\langle\bar{F}(\sigma),\sigma^\prime-\sigma\right\rangle=\sup_{(\pi',w')\in\Delta_n^m\times\Delta_m^\circ}\sum_i\frac{w_i^\prime}{w_i}\left\langle F_i(\pi),\pi_i^\prime-\pi_i\right\rangle \\
    = & \sup_{w'\in\Delta_m^\circ}\sum_i\frac{w_i^\prime}{w_i}\sup_{\pi_i^\prime\in\Delta_n}\left\langle F_i(\pi),\pi_i^\prime-\pi_i\right\rangle=\max_i\frac{{\rm gap}_i(\pi_i,\pi_{\hat{i}})}{w_i}
  \end{align*}

  Since ${\rm gap}_i(\pi_i,\pi_{\hat{i}})\geq0$, $w_i>0$, and ${\rm gap}_{\bar{F}}(\sigma)\geq0$, we have that ${\rm gap}_{\bar{F}}(\sigma)=0$ if and only if ${\rm gap}_i(\pi_i,\pi_{\hat{i}})=0$ for every $i$, which is equivalently ${\rm gap}(\pi)=0$ by Theorem~\ref{thm_decompose}.
\end{proof}

Therefore, for any $w^\star\in\Delta_m^\circ$, $\pi^\star\in\Delta_n^m$ is a solution of ${\rm VI}(F,\Delta_n^m)$ if and only if $\sigma^\star=\phi(\pi^\star,w^\star)$ is a solution of ${\rm VI}(\bar{F},\Delta_{mn}^+)$. Thus, we have transformed ${\rm VI}(F,\Delta_n^m)$ to ${\rm VI}(\bar{F},\Delta_{mn}^+)$ while preserving the solutions.

The transformation ${\rm VI}(\bar{F},\Delta_{mn}^+)$ is not a smooth VI on the closed simplex, because $\Delta_{mn}^+$ excludes points where $\mathbf{1}^\top\sigma_i=0$, and $\bar{F}(\sigma)$ blows up as $\sigma$ approaches these points. Thus, results about the fixed-point bundle framework in Paper 1 do not apply to ${\rm VI}(\bar{F},\Delta_{mn}^+)$. Nevertheless, we can still substitute $\bar{F}(\sigma)$ into the fixed-point bundle framework in Paper 1 as a mere formality, without invoking any theoretical guarantee provided by the theorems in Paper 1.

In the following developments of this section, we establish the equivalence between the formality-only fixed-point bundle framework of ${\rm VI}(\bar{F},\Delta_{mn}^+)$ and the composite fixed-point bundle framework of ${\rm VI}(F,\Delta_n^m)$, while the final gap from ${\rm VI}(\bar{F},\Delta_{mn}^+)$ to an actual smooth VI on a closed simplex will be filled in the next section.

\subsection{Isomorphism between fixed-point bundles}

First, we establish the equivalence between the fixed-point bundles of ${\rm VI}(F,\Delta_n^m)$ and ${\rm VI}(\bar{F},\Delta_{mn}^+)$. The fixed-point bundle of ${\rm VI}(F,\Delta_n^m)$ is the composite fixed-point bundle $E\to\Delta_n^m\times\Delta_m^\circ\to\Delta_n^m$ defined in equation~\eqref{equ_solution_space}. From Paper 1, the fixed-point bundle of ${\rm VI}(\bar{F},\Delta_{mn}^+)$ has fibers $\bar{B}'(\sigma)$ derived as follows.
\begin{align*}
  \bar{B}'(\sigma) & =\left\{\sigma\circ\bar{F}(\sigma)+v\sigma|v\in\mathbb{R}\right\}                                                                                             \\
                   & =\left\{\mu|\mu_i=w_i\pi_i\circ\frac{1}{w_i}\left(F_i(\pi)-(\pi_i^\top F_i(\pi))\mathbf{1}\right)+v w_i\pi_i,(\pi,w)=\phi^{-1}(\sigma),v\in\mathbb{R}\right\} \\
                   & =\left\{{\rm reshape}(\tilde{\tau}(\pi)+\bar{v}w\odot\pi)|(\pi,w)=\phi^{-1}(\sigma),\bar{v}\in\mathbb{R}\right\}
\end{align*}

We exclude the point given by $\bar{v}=0$ from each fiber $\bar{B}'(\sigma)$ in correspondence with the composite fixed-point bundle of ${\rm VI}(F,\Delta_n^m)$. Then, the fixed-point bundle $\bar{E}\to\Delta_{mn}^+$ of ${\rm VI}(\bar{F},\Delta_{mn}^+)$ is as equation~\eqref{equ_fpbund_simplex} shows.
\begin{equation}
  \label{equ_fpbund_simplex}
  \begin{aligned}
     & \bar{E}\xrightarrow{\bar{\alpha}}\Delta_{mn}^+                                                                                                \\
     & \bar{\alpha}\left((\sigma,\mu)\right)=\sigma                                                                                                  \\
     & \bar{E}=\bigcup_{\sigma\in\Delta_{mn}^+}\{\sigma\}\times \bar{B}(\sigma)                                                                      \\
     & \bar{B}(\sigma)=\left\{{\rm reshape}(\tilde{\tau}(\pi)+\bar{v}w\odot\pi)|(\pi,w)=\phi^{-1}(\sigma),\bar{v}\in\mathbb{R}\setminus\{0\}\right\} \\
  \end{aligned}
\end{equation}

Note that $\bar{B}(\sigma)$ consists of $\mu={\rm reshape}(\tilde{\tau}(\pi)+\bar{v}w\odot\pi)$, while $\tilde{B}(\pi,w)$ consists of $\tau=\tilde{\tau}(\pi)+\bar{v}w\odot\pi$. This indicates that the bundles $E\to\Delta_n^m\times\Delta_m^\circ$ and $\bar{E}\to\Delta_{mn}^+$ have very similar fibers. We can show that there is a homeomorphism between $E$ and $\bar{E}$, given by $\Phi$ as equation~\eqref{equ_Phi_total} shows.
\begin{equation}
  \label{equ_Phi_total}
  \begin{aligned}
    \Phi(\pi,\tau)=(\sigma,\mu),      & \quad{\rm s.t.}~\sigma=\phi(\pi,{\rm norm}([\mathbf{1}^\top\tau_i])),\mu_i=\tau_i \\
    \Phi^{-1}(\sigma,\mu)=(\pi,\tau), & \quad{\rm s.t.}~\pi=\phi^{-1}_\pi(\sigma),\tau_i=\mu_i                            \\
  \end{aligned}
\end{equation}

It can be verified that $\Phi$ and $\Phi^{-1}$ are indeed inverse maps of each other between $E$ and $\bar{E}$. Then, $\Phi$ is a homeomorphism between $E$ and $\bar{E}$ if it is continuous. However, we aim to prove further that $\Phi$ is a homeomorphism between larger sets containing $E$ and $\bar{E}$. Recall that for every $(\pi,\tau)\in E$, we have $\tau\in\mathbb{R}^{m\times n}_{\pm}$. Since $\mu$ in $\bar{E}$ and $\tau$ in $E$ are related by a reshape, we introduce another set
$$\mathbb{R}^{mn}_{\pm}:=\{\mu\in\mathbb{R}^{mn}|(\mathbf{1}^\top\mu_i)(\mathbf{1}^\top\mu_j)>0,~\forall i,j\}={\rm reshape}(\mathbb{R}^{m\times n}_{\pm}),$$
and study the homeomorphism $\Phi$ with $\tau\in\mathbb{R}^{m\times n}_{\pm}$ and $\mu\in\mathbb{R}^{mn}_{\pm}$.

\begin{theorem}
  \label{thmequ_totalspace}
  For $(\pi,\tau)\in\Delta_n^m\times\mathbb{R}^{m\times n}_{\pm}$ and $(\sigma,\mu)\in\Delta_{mn}^+\times\mathbb{R}^{mn}_{\pm}$, the maps $\Phi$ and $\Phi^{-1}$ satisfy
  \begin{equation}
    \begin{aligned}
      \left(\Phi^{-1}\circ\Phi\right)(\pi,\tau)   & =(\pi,\tau),                                                                                              \\
      \left(\Phi\circ\Phi^{-1}\right)(\sigma,\mu) & =\left({\rm reshape}\left({\rm norm}([\mathbf{1}^\top\mu_i])\odot\phi^{-1}_\pi(\sigma)\right),\mu\right). \\
    \end{aligned}
  \end{equation}
  Consequently, $\Phi$ is a real-analytic homeomorphism such that
  \begin{equation}
    \label{equ_homeo}
    \Delta_n^m\times\mathbb{R}^{m\times n}_{\pm}\cong\left(\Delta_{mn}^+\times\mathbb{R}^{mn}_{\pm}\right)\cap\left\{(\sigma,\mu)|\phi^{-1}_w(\sigma)={\rm norm}([\mathbf{1}^\top\mu_i])\right\}.
  \end{equation}
\end{theorem}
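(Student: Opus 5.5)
The plan is to compute the two compositions directly from the definitions of $\Phi$, $\Phi^{-1}$ in equation~\eqref{equ_Phi_total} and of $\phi$, $\phi^{-1}$ in equation~\eqref{equ_phi_base}. Throughout, write $\phi^{-1}=(\phi^{-1}_\pi,\phi^{-1}_w)$ for the two components.

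\textbf{Well-definedness.} For $\tau\in\mathbb{R}^{m\times n}_{\pm}$, all sums $\mathbf{1}^\top\tau_i$ are nonzero and share a common sign. Hence $\sum_j\mathbf{1}^\top\tau_j\neq0$, and every entry of ${\rm norm}([\mathbf{1}^\top\tau_i])$ is a positive ratio summing to $1$. So ${\rm norm}([\mathbf{1}^\top\tau_i])\in\Delta_m^\circ$, and $\phi(\pi,{\rm norm}([\mathbf{1}^\top\tau_i]))\in\Delta_{mn}^+$ is defined. Since $\mu=\tau$ up to reshape, $\mu\in\mathbb{R}^{mn}_{\pm}$. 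In the other direction, $\Phi^{-1}$ only needs $\phi^{-1}_\pi$, which is defined on $\Delta_{mn}^+$, and again $\tau\in\mathbb{R}^{m\times n}_{\pm}$.

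\textbf{The two compositions.}
\begin{itemize}
  \item For $\Phi^{-1}\circ\Phi$, use $\phi^{-1}\circ\phi={\rm id}$ on $\Delta_n^m\times\Delta_m^\circ$. This gives $\phi^{-1}_\pi(\phi(\pi,w))=\pi$ with $w={\rm norm}([\mathbf{1}^\top\tau_i])$, while the $\tau$ component passes through unchanged. So $\Phi^{-1}\circ\Phi={\rm id}$.
  \item For $\Phi\circ\Phi^{-1}$, set $\pi=\phi^{-1}_\pi(\sigma)$ and $\tau=\mu$. Applying $\Phi$ gives $\sigma'=\phi(\pi,{\rm norm}([\mathbf{1}^\top\mu_i]))$. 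By the formula $\sigma_{i,a}=w_i\pi_{i,a}$, this equals ${\rm reshape}({\rm norm}([\mathbf{1}^\top\mu_i])\odot\phi^{-1}_\pi(\sigma))$, which is the stated formula.
\end{itemize}

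\textbf{Image of $\Phi$.} I identify it with the set $S$ on the right of equation~\eqref{equ_homeo}.
\begin{itemize}
  \item $\Phi$ maps into $S$: applying $\phi^{-1}_w$ to $\phi(\pi,{\rm norm}([\mathbf{1}^\top\tau_i]))$ returns ${\rm norm}([\mathbf{1}^\top\tau_i])$, and $\mu=\tau$.
  \item On $S$, $\Phi\circ\Phi^{-1}={\rm id}$: the constraint $\phi^{-1}_w(\sigma)={\rm norm}([\mathbf{1}^\top\mu_i])$ gives ${\rm norm}([\mathbf{1}^\top\mu_i])\odot\phi^{-1}_\pi(\sigma)=\phi^{-1}_w(\sigma)\odot\phi^{-1}_\pi(\sigma)$. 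Reshaped, this is $\phi(\phi^{-1}(\sigma))=\sigma$.
\end{itemize}
Together with $\Phi^{-1}\circ\Phi={\rm id}$, $\Phi$ is a bijection from $\Delta_n^m\times\mathbb{R}^{m\times n}_{\pm}$ onto $S$ with inverse $\Phi^{-1}|_S$.

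\textbf{Regularity.} I expect this to be the only subtle point. The map $\tau\mapsto{\rm norm}([\mathbf{1}^\top\tau_i])$ is a rational function whose denominator $\sum_j\mathbf{1}^\top\tau_j$ never vanishes on $\mathbb{R}^{m\times n}_{\pm}$, so it is real-analytic there. Composing with the real-analytic $\phi$ shows $\Phi$ is real-analytic. Likewise $\phi^{-1}_\pi$ is real-analytic on $\Delta_{mn}^+$, because $\sum_a\sigma_{i,a}>0$, so $\Phi^{-1}$ is real-analytic and in particular continuous on $S$. A bijective continuous map with continuous inverse is a homeomorphism, which yields equation~\eqref{equ_homeo}.

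On domains with boundary, real-analyticity is understood in the paper's sense of restrictions of analytic maps defined on open neighborhoods. Those neighborhoods are available here because the denominators remain bounded away from zero locally.
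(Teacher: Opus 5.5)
Your proposal is correct and follows essentially the same route as the paper. Both compute the two compositions directly, show that $\Phi$ maps into the constrained set on which $\Phi\circ\Phi^{-1}$ is the identity, and obtain real-analyticity from elementary operations with nonvanishing denominators. Two points in your version are slightly more careful than the paper's: you note that $\sum_j\mathbf{1}^\top\tau_j$ is merely nonzero on $\mathbb{R}^{m\times n}_{\pm}$ (it can be negative, not positive as the paper states), and you explicitly check that ${\rm norm}([\mathbf{1}^\top\tau_i])\in\Delta_m^\circ$.
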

\begin{proof}
  For each $(\pi,\tau)\in\Delta_n^m\times\mathbb{R}^{m\times n}_{\pm}$, under the map $\Phi^{-1}\circ\Phi$, we have
  \begin{align*}
    (\pi,\tau)\mapsto\left(\phi(\pi,{\rm norm}([\mathbf{1}^\top\tau_i])),{\rm reshape}(\tau)\right)\mapsto & \left(\phi^{-1}_\pi\left(\phi(\pi,{\rm norm}([\mathbf{1}^\top\tau_i]))\right),{\rm reshape}({\rm reshape}(\tau))\right) \\
                                                                                                           & \quad=(\pi,\tau).
  \end{align*}

  For each $(\sigma,\mu)\in\Delta_{mn}^+\times\mathbb{R}^{mn}_{\pm}$, under the map $\Phi\circ\Phi^{-1}$, we have
  \begin{align*}
    (\sigma,\mu)\mapsto\left(\phi^{-1}_\pi(\sigma),{\rm reshape}(\mu)\right)\mapsto & \left(\phi\left(\phi^{-1}_\pi(\sigma),{\rm norm}([\mathbf{1}^\top\mu_i])\right),{\rm reshape}({\rm reshape}(\mu))\right) \\
                                                                                    & \quad=\left({\rm reshape}\left({\rm norm}([\mathbf{1}^\top\mu_i])\odot\phi^{-1}_\pi(\sigma)\right),\mu\right).
  \end{align*}

  $\Phi^{-1}\circ\Phi$ is always an identity map, whereas $\Phi\circ\Phi^{-1}$ is an identity map if and only if ${\rm norm}([\mathbf{1}^\top\mu_i])=\phi^{-1}_w(\sigma)$. To prove that $\Phi$ is a bijection between the LHS (left-hand side) and RHS (right-hand side) in equation~\eqref{equ_homeo}, we further need to show that $\Phi({\rm LHS})\subset{\rm RHS}$ and $\Phi^{-1}({\rm RHS})\subset{\rm LHS}$.

  For every $(\sigma,\mu)=\Phi(\pi,\tau)$ with $(\pi,\tau)\in\Delta_n^m\times\mathbb{R}^{m\times n}_{\pm}$ (belonging to the LHS), by definition of $\Phi$, we have
  \begin{align*}
    \phi^{-1}_w(\sigma)=\phi^{-1}_w\left(\phi(\pi,{\rm norm}([\mathbf{1}^\top\tau_i]))\right)={\rm norm}([\mathbf{1}^\top\tau_i])={\rm norm}([\mathbf{1}^\top\mu_i]).
  \end{align*}
  Then, $(\sigma,\mu)$ belongs to the RHS. Thus, $\Phi({\rm LHS})\subset{\rm RHS}$. For every $(\pi,\tau)=\Phi^{-1}(\sigma,\mu)$ with $(\sigma,\mu)$ belonging to the RHS, we trivially have $(\pi,\tau)\in\Delta_n^m\times\mathbb{R}^{m\times n}_{\pm}$ (belonging to the LHS). Thus, $\Phi^{-1}({\rm RHS})\subset{\rm LHS}$. Therefore, we have proved that $\Phi$ is a bijection between the LHS and RHS in equation~\eqref{equ_homeo}.

  The maps $\phi$ and $\phi^{-1}$ in equation~\eqref{equ_phi_base} are real-analytic because they consist only of elementary operations and the denominator $\sum_a\sigma_{i,a}>0$ on $\Delta_{mn}^+$. The maps $\Phi$ and $\Phi^{-1}$ in equation~\eqref{equ_Phi_total} are also real-analytic because they consist only of elementary operations and the real-analytic $\phi$ and $\phi^{-1}$, and the denominator $\sum_i \mathbf{1}^\top\tau_i>0$ on $\mathbb{R}^{m\times n}_{\pm}$. Therefore, $\Phi$ is a real-analytic homeomorphism between the LHS and RHS in equation~\eqref{equ_homeo}.
\end{proof}

Theorem~\ref{thmequ_totalspace} shows that $\Phi$ is a homeomorphism beyond $E$ and $\bar{E}$. This is useful in establishing the equivalence between the three equivalent characterizations of ${\rm VI}(F,\Delta_n^m)$ and ${\rm VI}(\bar{F},\Delta_{mn}^+)$, because $E$ and $\bar{E}$ are only solutions of the characterizations, but with $\Phi$ in equation~\eqref{equ_homeo}, we can compare the function values of the characterizations whether on solutions or not.

Next, we show that $E\to\Delta_n^m\times\Delta_m^\circ$ and $\bar{E}\to\Delta_{mn}^+$ are isomorphic as fiber bundles. Given general fiber bundles $E\xrightarrow{\alpha}B$ and $E'\xrightarrow{\alpha'}B'$, a bundle isomorphism is a pair of homeomorphisms $(\psi:B\to B',\Psi:E\to E')$ such that $\alpha'\circ\Psi=\psi\circ\alpha$.

\begin{theorem}
  \label{thmequ_bundle}
  $(\Phi,\phi)$ is a bundle isomorphism such that
  $$E\to\Delta_n^m\times\Delta_m^\circ\cong\bar{E}\to\Delta_{mn}^+.$$
\end{theorem}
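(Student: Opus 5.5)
We need three things: $\phi:\Delta_n^m\times\Delta_m^\circ\to\Delta_{mn}^+$ is a homeomorphism, $\Phi$ restricts to a homeomorphism $E\to\bar{E}$, and the diagram commutes, $\bar{\alpha}\circ\Phi=\phi\circ\alpha_1$ on $E$. The first was already established in the construction of $\phi$ in equation~\eqref{equ_phi_base}. For the second, I would restrict the real-analytic homeomorphism of Theorem~\ref{thmequ_totalspace}. This requires checking that $E$ sits inside the left-hand side of \eqref{equ_homeo}, and that $\Phi(E)=\bar{E}$ exactly.

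\textbf{Step 1 (containment).} For $(\pi,\tau)\in E$ we have $\tau=\tilde{\tau}(\pi)+\bar{v}w\odot\pi$ with $w\in\Delta_m^\circ$ and $\bar{v}\neq0$. Since $\mathbf{1}^\top\tilde{\tau}_i(\pi)=0$ and $\mathbf{1}^\top\pi_i=1$, we get $\mathbf{1}^\top\tau_i=\bar{v}w_i$. All these have the sign of $\bar{v}$, so $\tau\in\mathbb{R}^{m\times n}_\pm$. Moreover ${\rm norm}([\mathbf{1}^\top\tau_i])=w$, because the normalization divides by $\sum_i\bar{v}w_i=\bar{v}$. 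This identity is what makes everything fit.

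\textbf{Step 2 ($\Phi(E)\subseteq\bar{E}$).} Step 1 gives
\[
\Phi(\pi,\tau)=(\phi(\pi,w),{\rm reshape}(\tau)).
\]
Set $\sigma=\phi(\pi,w)$, so $\phi^{-1}(\sigma)=(\pi,w)$. Then ${\rm reshape}(\tau)={\rm reshape}(\tilde{\tau}(\pi)+\bar{v}w\odot\pi)$ with $\bar{v}\neq0$, which is an element of $\bar{B}(\sigma)$ by \eqref{equ_fpbund_simplex}.

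\textbf{Step 3 ($\bar{E}\subseteq\Phi(E)$).} Given $(\sigma,\mu)\in\bar{E}$, write $(\pi,w)=\phi^{-1}(\sigma)$ and $\mu={\rm reshape}(\tilde{\tau}(\pi)+\bar{v}w\odot\pi)$ with $\bar{v}\neq0$. The computation of Step 1 gives ${\rm norm}([\mathbf{1}^\top\mu_i])=w=\phi^{-1}_w(\sigma)$ and $\mu\in\mathbb{R}^{mn}_\pm$. So $(\sigma,\mu)$ lies in the right-hand side of \eqref{equ_homeo}, where $\Phi\circ\Phi^{-1}$ is the identity by Theorem~\ref{thmequ_totalspace}. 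Also $\Phi^{-1}(\sigma,\mu)=(\pi,\tilde{\tau}(\pi)+\bar{v}w\odot\pi)\in E$. Hence $\Phi|_E:E\to\bar{E}$ is a bijection, and its continuous inverse is $\Phi^{-1}|_{\bar{E}}$. Restrictions of homeomorphisms are homeomorphisms in the subspace topology, so $\Phi|_E$ is a homeomorphism.

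\textbf{Step 4 (commutativity).} For $(\pi,\tau)\in E$,
\[
\bar{\alpha}(\Phi(\pi,\tau))=\phi(\pi,{\rm norm}([\mathbf{1}^\top\tau_i]))=\phi(\alpha_1(\pi,\tau)),
\]
directly from \eqref{equ_Phi_total} and \eqref{equ_comp_fpbund}. Fibers are then carried onto fibers: $\Phi(\{\pi\}\times\tilde{B}(\pi,w))=\{\phi(\pi,w)\}\times\bar{B}(\phi(\pi,w))$.

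The main obstacle is Step 3, the surjectivity onto $\bar{E}$. The composite $\Phi\circ\Phi^{-1}$ is the identity only on the constraint set ${\rm norm}([\mathbf{1}^\top\mu_i])=\phi^{-1}_w(\sigma)$, so we must verify that every point of $\bar{E}$ automatically satisfies it. The trace identity $\mathbf{1}^\top\tilde{\tau}_i=0$ together with $\bar{v}\neq0$ handles this.
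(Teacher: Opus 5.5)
Your proposal is correct and takes essentially the same route as the paper. Both arguments restrict the homeomorphism of Theorem~\ref{thmequ_totalspace} after placing $E$ and $\bar{E}$ inside the two sides of \eqref{equ_homeo}, then check $\Phi(E)\subseteq\bar{E}$ and $\Phi^{-1}(\bar{E})\subseteq E$, and finally verify $\bar{\alpha}\circ\Phi=\phi\circ\alpha_1$ directly; your use of $\mathbf{1}^\top\tilde{\tau}_i(\pi)=0$ to get ${\rm norm}([\mathbf{1}^\top\tau_i])=w$ just makes explicit a step the paper leaves implicit.
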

\begin{proof}
  First, we prove that $\Phi$ is a homeomorphism between $E$ and $\bar{E}$. We have $E\subset\Delta_n^m\times\mathbb{R}^{m\times n}_{\pm}$ trivially. By the definition of $\bar{B}(\sigma)$, every $\mu\in\bar{B}(\sigma)$ for some $\sigma\in\Delta_{mn}^+$ satisfies $\mathbf{1}^\top\mu_i=\bar{v}w_i$ with $w=\phi^{-1}_w(\sigma)$. Thus, $\bar{E}\subset(\Delta_{mn}^+\times\mathbb{R}^{mn}_{\pm})\cap\{(\sigma,\mu)|\phi^{-1}_w(\sigma)={\rm norm}([\mathbf{1}^\top\mu_i])\}$. We further need to show $\Phi(E)\subset\bar{E}$ and $\Phi^{-1}(\bar{E})\subset E$.

  For every $(\pi,\tau)\in E$, we have $\tau=\tilde{\tau}(\pi)+\bar{v}w\odot\pi$ for some $w\in\Delta_m^\circ$ and $\bar{v}\neq0$. Then, under the map $\Phi$, we have
  \begin{align*}
    (\pi,\tilde{\tau}(\pi)+\bar{v}w\odot\pi)\mapsto & \left(\phi(\pi,w),{\rm reshape}(\tilde{\tau}(\pi)+\bar{v}w\odot\pi)\right)                                            \\
                                                    & \quad=\left(\sigma,{\rm reshape}(\tilde{\tau}(\pi)+\bar{v}w\odot\pi)\right)\quad{\rm s.t.}~(\pi,w)=\phi^{-1}(\sigma),
  \end{align*}
  which indicates that $\Phi(\pi,\tau)\in\bar{E}$. For every $(\sigma,\mu)\in \bar{E}$, we have $\mu={\rm reshape}(\tilde{\tau}(\pi)+\bar{v}w\odot\pi)$ for $(\pi,w)=\phi^{-1}(\sigma)$ and $\bar{v}\neq0$. Then, under the map $\Phi^{-1}$, we have
  \begin{align*}
    (\sigma,{\rm reshape}(\tilde{\tau}(\pi)+\bar{v}w\odot\pi))\mapsto & \left(\phi^{-1}_\pi(\sigma),\tilde{\tau}(\pi)+\bar{v}w\odot\pi\right)                               \\
                                                                      & \quad=\left(\pi,\tilde{\tau}(\pi)+\bar{v}w\odot\pi\right)\quad{\rm s.t.}~(\pi,w)=\phi^{-1}(\sigma),
  \end{align*}
  which indicates that $\Phi^{-1}(\sigma,\mu)\in E$. Therefore, we have $\Phi(E)\subset\bar{E}$ and $\Phi^{-1}(\bar{E})\subset E$, so $\Phi$ is a homeomorphism between $E$ and $\bar{E}$.

  We already have that $\phi$ is a homeomorphism between $\Delta_n^m\times\Delta_m^\circ$ and $\Delta_{mn}^+$. For $(\Phi,\phi)$ to constitute a bundle isomorphism, we additionally need $\bar{\alpha}\circ\Phi=\phi\circ\alpha_1$, where $\alpha_1$ and $\bar{\alpha}$ are bundle projections.

  For each $(\pi,\tau)\in E$, under the maps $\Phi:E\to\bar{E}$ and $\bar{\alpha}:\bar{E}\to\Delta_{mn}^+$, we have
  $$(\pi,\tau)\mapsto(\phi(\pi,{\rm norm}([\mathbf{1}^\top\tau_i])),{\rm reshape}(\tau))\mapsto\phi(\pi,{\rm norm}([\mathbf{1}^\top\tau_i])),$$
  and under the maps $\alpha_1:E\to\Delta_n^m\times\Delta_m^\circ$ and $\phi:\Delta_n^m\times\Delta_m^\circ\to\Delta_{mn}^+$, we have
  $$(\pi,\tau)\mapsto(\pi,{\rm norm}([\mathbf{1}^\top\tau_i]))\mapsto\phi(\pi,{\rm norm}([\mathbf{1}^\top\tau_i])).$$

  Therefore, we obtain $\bar{\alpha}\circ\Phi=\phi\circ\alpha_1$, so $(\Phi,\phi)$ is a bundle isomorphism between $E\to\Delta_n^m\times\Delta_m^\circ$ and $\bar{E}\to\Delta_{mn}^+$.
\end{proof}

\subsection{Equivalence between characterizations}

Having obtained the homeomorphism in equation~\eqref{equ_homeo}, we then use it to establish the equivalence between the characterizations of ${\rm VI}(F,\Delta_n^m)$ and ${\rm VI}(\bar{F},\Delta_{mn}^+)$. First, we introduce the objects for which the equivalence is to be studied, where we replace the FPKKTs with the fixed-point bundle equations, and we add sections of the fixed-point bundles into the study.

\noindent\textbf{Sections.}
The sections $\check{\tau}(\pi)$ and $\tilde{\tau}(\pi)$ of the bundle $E\cup\{(\pi,\tilde{\tau}(\pi))|\pi\in\Delta_n^m\}\to\Delta_n^m$ are as equation~\eqref{equ_section} shows. The sections $\check{\mu}(\sigma)$ and $\tilde{\mu}(\sigma)$ of the bundle $\bar{E}\cup\{(\sigma,{\rm reshape}(\tilde{\tau}(\phi^{-1}_\pi(\sigma))))|\sigma\in\Delta_{mn}^+\}\to\Delta_{mn}^+$ are from Paper 1, shown in equation~\eqref{equ_section_simplex}.
\begin{equation}
  \label{equ_section_simplex}
  \begin{aligned}
    \check{\mu}(\sigma)= & \sigma\circ\left(\bar{F}(\sigma)-\left(\min_{i,a} \bar{F}_{i,a}(\sigma)\right)\mathbf{1}\right)                                                                         \\
    \tilde{\mu}(\sigma)= & \sigma\circ\left(\bar{F}(\sigma)-\left(\sigma^\top \bar{F}(\sigma)\right)\mathbf{1}\right)=\left(I-\sigma\mathbf{1}^\top\right)\left(\sigma\circ \bar{F}(\sigma)\right) \\
  \end{aligned}
\end{equation}

\noindent\textbf{Fixed-point bundle equations.}
The fixed-point bundle equation of $E\to\Delta_n^m$ is as equation~\eqref{equ_g} shows. The fixed-point bundle equation of $\bar{E}\to\Delta_{mn}^+$ is $\bar{G}(\sigma,\mu)=0$ from Paper 1, where
\begin{equation}
  \bar{G}(\sigma,\mu)=\left(I-\sigma\mathbf{1}^\top\right)(\sigma\circ \bar{F}(\sigma)-\mu).
\end{equation}

\noindent\textbf{Brouwer functions.}
The Brouwer function $(\hat{\pi},r,v)=M(\pi,\tau)$ characterizing ${\rm VI}(F,\Delta_n^m)$ is as equation~\eqref{equ_brouwer} shows. The Brouwer function characterizing ${\rm VI}(\bar{F},\Delta_{mn}^+)$ is from Paper 1. In fact, $(\hat{\pi},r,v)=M(\pi,\tau)$ in this paper can only be shown equivalent to an index-wise variant $(\hat{\sigma},\bar{r},\bar{v})=\bar{M}(\sigma,\mu)$ as shown in equation~\eqref{equ_kkt_simplex}, instead of the original Brouwer function in Paper 1.
\begin{equation}
  \label{equ_kkt_simplex}
  (\hat{\sigma},\bar{r},\bar{v})=\bar{M}(\sigma,\mu),\quad{\rm s.t.~}
  \begin{bmatrix}
    \hat{\sigma}_i\circ \bar{r}_i-\mu_i                   \\
    r_i-\bar{F}_i(\sigma)-\bar{v}_i\mathbf{1}             \\
    \mathbf{1}^\top\hat{\sigma}_i-\mathbf{1}^\top\sigma_i \\
  \end{bmatrix}=0
\end{equation}

The index-wise variant $(\hat{\sigma},\bar{r},\bar{v})=\bar{M}(\sigma,\mu)$ has two modifications compared to the original one. First, in Paper 1, all the components of $\bar{r}$ depend on a single scalar $\bar{v}$, whereas in equation~\eqref{equ_kkt_simplex}, each $\bar{r}_i$ depends on an independent $\bar{v}_i$. Second, the constraint $\sum_i\mathbf{1}^\top\hat{\sigma}_i=1$ in Paper 1 changes to the index-wise constraint $\mathbf{1}^\top\hat{\sigma}_i-\mathbf{1}^\top\sigma_i$ in equation~\eqref{equ_kkt_simplex}. This variant essentially uses the index-wise $\bar{v}_i$ to control each $\mathbf{1}^\top\hat{\sigma}_i$ instead of the entire $\sum_i\mathbf{1}^\top\hat{\sigma}_i$.

\noindent\textbf{MCPs.}
The two MCPs characterizing ${\rm VI}(F,\Delta_n^m)$ and ${\rm VI}(\bar{F},\Delta_{mn}^+)$ are as equation~\eqref{equ_mcp_obj} shows. ${\rm mcp}(\pi,\tau)$ originates from MCP~\eqref{equ_mcp}, and $(r,v)$ is additionally constrained by the Brouwer function $(\hat{\pi},r,v)=M(\pi,\tau)$ to make $\pi$ the only optimization variable. ${\rm mcp}(\sigma,\mu)$ originates from the MCP in Paper 1, where $(\bar{r},\bar{v})$ is originally additionally constrained by the Brouwer function in Paper 1 to make $\sigma$ the only optimization variable, but we modify $(\bar{r},\bar{v})$ to be additionally constrained by the index-wise variant $(\hat{\sigma},\bar{r},\bar{v})=\bar{M}(\sigma,\mu)$.
\begin{equation}
  \label{equ_mcp_obj}
  \begin{aligned}
    {\rm mcp}(\pi,\tau)   & =\sum_i\left(\pi_i^\top r_i-\tau_i^\top\ln\pi_i-\tau_i^\top \ln r_i\right),\quad{\rm s.t.~}(\hat{\pi},r,v)=M(\pi,\tau)         \\
    {\rm mcp}(\sigma,\mu) & =\sigma^\top \bar{r}-\mu^\top\ln\sigma-\mu^\top \ln \bar{r},\quad{\rm s.t.~}(\hat{\sigma},\bar{r},\bar{v})=\bar{M}(\sigma,\mu) \\
  \end{aligned}
\end{equation}

\noindent\textbf{Equivalence.}
The sections $\check{\tau}(\pi)$ and $\check{\mu}(\sigma)$ do not have a useful relation themselves, but since ${\rm gap}_i(\pi_i,\pi_{\hat{i}})=\mathbf{1}^\top\check{\tau}_i(\pi)$ and ${\rm gap}_{\bar{F}}(\sigma)=\mathbf{1}^\top\check{\mu}(\sigma)$, they are related through the gap function relation in Theorem~\ref{thmequ_gap}. The relations between the remaining characterizations are as the following proposition shows.

\begin{proposition}
  \label{thmequ_chara}
  For $(\pi,\tau)\in\Delta_n^m\times\mathbb{R}^{m\times n}_{\pm}$ and $(\sigma,\mu)\in\Delta_{mn}^+\times\mathbb{R}^{mn}_{\pm}$, let $\phi^{-1}_w(\sigma)={\rm norm}([\mathbf{1}^\top\mu_i])$ and $(\sigma,\mu)=\Phi(\pi,\tau)$, then the following equalities hold.
  \begin{enumerate}
    \item $\tilde{\mu}_i(\sigma)=\tilde{\tau}_i(\pi)$.
    \item $\bar{G}_i(\sigma,\mu)=G_i(\pi,\tau)$.
    \item $\hat{\sigma}_i=w_i\hat{\pi}_i$ and $\bar{r}_i=r_i/w_i$, where $(\hat{\pi},r,v)=M(\pi,\tau)$ and $(\hat{\sigma},\bar{r},\bar{v})=\bar{M}(\sigma,\mu)$.
    \item ${\rm mcp}(\sigma,\mu)={\rm mcp}(\pi,\tau)$.
  \end{enumerate}
\end{proposition}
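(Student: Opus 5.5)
The plan is to work entirely in coordinates. Set $(\pi,w)=\phi^{-1}(\sigma)$, so that $\sigma_i=w_i\pi_i$. By the hypothesis $\phi^{-1}_w(\sigma)={\rm norm}([\mathbf{1}^\top\mu_i])$ and by $(\sigma,\mu)=\Phi(\pi,\tau)$, we also have $\mu_i=\tau_i$ and $w={\rm norm}([\mathbf{1}^\top\tau_i])$. By Theorem~\ref{thmequ_totalspace}, this correspondence is consistent. Each item then follows by substituting equation~\eqref{equ_F_simplex} for $\bar{F}_i(\sigma)$. The key identities are
\begin{equation*}
\sigma_i\circ\bar{F}_i(\sigma)=\pi_i\circ\left(F_i(\pi)-(\pi_i^\top F_i(\pi))\mathbf{1}\right)=\tilde{\tau}_i(\pi)
\quad\text{and}\quad
\sigma^\top\bar{F}(\sigma)=\sum_i\pi_i^\top\left(F_i-(\pi_i^\top F_i)\mathbf{1}\right)=0 .
\end{equation*}

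\textbf{Item (i).} Because $\sigma^\top\bar{F}(\sigma)=0$, we get $\tilde{\mu}(\sigma)=\sigma\circ\bar{F}(\sigma)$, and its $i$-th block equals $\tilde{\tau}_i(\pi)$.

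\textbf{Item (ii).} Write $\bar{G}(\sigma,\mu)=(I-\sigma\mathbf{1}^\top)(\sigma\circ\bar{F}(\sigma)-\mu)$. Using (i), the $i$-th block of the inner vector is $\tilde{\tau}_i-\tau_i$, and its total sum is $-\sum_j\mathbf{1}^\top\tau_j$. So $\bar{G}_i=\tilde{\tau}_i-\tau_i+w_i\pi_i\sum_j\mathbf{1}^\top\tau_j$. On the other side, $G_i=(I-\pi_i\mathbf{1}^\top)(\pi_i\circ F_i-\tau_i)$. Since $(I-\pi_i\mathbf{1}^\top)(\pi_i\circ F_i)=\tilde{\tau}_i$, this gives $G_i=\tilde{\tau}_i-\tau_i+\pi_i\mathbf{1}^\top\tau_i$. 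Equality of the two expressions reduces to $w_i\sum_j\mathbf{1}^\top\tau_j=\mathbf{1}^\top\tau_i$, which is exactly the normalization $w={\rm norm}([\mathbf{1}^\top\tau_i])$. This is the step where the constraint in the hypothesis is genuinely used.

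\textbf{Item (iii).} Take the solution $(\hat{\pi},r,v)$ of equation~\eqref{equ_kkt} and propose the candidate $\hat{\sigma}_i=w_i\hat{\pi}_i$, $\bar{r}_i=r_i/w_i$, and $\bar{v}_i=(v_i+\pi_i^\top F_i(\pi))/w_i$. Then check the three rows of equation~\eqref{equ_kkt_simplex} in turn.
\begin{itemize}
\item First row: $\hat{\sigma}_i\circ\bar{r}_i=\hat{\pi}_i\circ r_i=\tau_i=\mu_i$.
\item Second row: $\bar{F}_i+\bar{v}_i\mathbf{1}=(F_i-(\pi_i^\top F_i)\mathbf{1}+(v_i+\pi_i^\top F_i)\mathbf{1})/w_i=r_i/w_i$.
\item Third row: $\mathbf{1}^\top\hat{\sigma}_i=w_i=\mathbf{1}^\top\sigma_i$.
\end{itemize}
I expect this to be the main obstacle, because it requires $\bar{M}$ to be well defined, i.e.\ its solution must be unique. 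My approach is to argue index-wise, as for the bisection~\eqref{equ_bisection}. The system decouples over $i$. For each $i$, the quantity $\mathbf{1}^\top\mu_i/(\bar{F}_{i,a}+\bar{v}_i)$ is strictly monotone in $\bar{v}_i$ on the admissible interval where $\bar{r}_i>0$. The condition that the components sum to $w_i$ therefore pins down a unique $\bar{v}_i$. With $\bar{v}_i$ fixed, $\bar{r}_i$ and $\hat{\sigma}_i$ are determined, so the candidate is the solution. The sign issue for $\mu\in\mathbb{R}^{mn}_\pm$ is handled the same way as in Paper 1, so I would simply cite that treatment.

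\textbf{Item (iv).} Substitute (iii) together with $\sigma_i=w_i\pi_i$ and $\mu_i=\tau_i$:
\begin{equation*}
\sigma_i^\top\bar{r}_i=\pi_i^\top r_i,\qquad
\mu_i^\top\ln\sigma_i+\mu_i^\top\ln\bar{r}_i=\tau_i^\top(\ln w_i+\ln\pi_i+\ln r_i-\ln w_i).
\end{equation*}
The $\ln w_i$ terms cancel, and summing over $i$ yields ${\rm mcp}(\sigma,\mu)={\rm mcp}(\pi,\tau)$.
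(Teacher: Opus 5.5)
Your proof is correct and follows the paper's argument essentially step for step. You use the identity $\sigma_i\circ\bar{F}_i(\sigma)=\tilde{\tau}_i(\pi)$ with $\sum_j\mathbf{1}^\top\tilde{\tau}_j(\pi)=0$ for (i), the normalization $w_i\sum_j\mathbf{1}^\top\tau_j=\mathbf{1}^\top\tau_i$ for (ii), the reparametrization $v_i=w_i\bar{v}_i-\pi_i^\top F_i(\pi)$ for (iii), and $\sigma_i\circ\bar{r}_i=\pi_i\circ r_i$ for (iv). The only cosmetic difference is in (iii): the paper observes $\bar{q}_i(\bar{v}_i)=w_i\,q_i(w_i\bar{v}_i-\pi_i^\top F_i(\pi))$, so the zeros and even the bisection intervals correspond directly, whereas you verify a candidate and then invoke uniqueness of the zero, which holds for $\mu>0$ just as in the bisection argument.
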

\begin{proof}
  (i) Substituting $\sigma=\phi(\pi,w)$ and $\bar{F}(\sigma)$, we have
  $$\left(\sigma\circ\bar{F}(\sigma)\right)_i=w_i\pi_i\circ\frac{1}{w_i}\left(F_i(\pi)-(\pi_i^\top F_i(\pi))\mathbf{1}\right)=\tilde{\tau}_i(\pi).$$
  Then, since $\mathbf{1}^\top\tilde{\tau}_i(\pi)=0$ for every index $i$, we obtain the equation
  \begin{align*}
    \tilde{\mu}_i(\sigma)=\left(\sigma\circ\left(\bar{F}(\sigma)-\left(\sigma^\top \bar{F}(\sigma)\right)\mathbf{1}\right)\right) _i=\tilde{\tau}_i(\pi)-\left(\textstyle\sum_j\mathbf{1}^\top\tilde{\tau}_j(\pi)\right)\sigma_i=\tilde{\tau}_i(\pi).
  \end{align*}

  (ii) Substituting $(\sigma,\mu)=\Phi(\pi,\tau)$ and $\sigma=\phi(\pi,w)$, using $\tilde{\mu}_i(\sigma)=\tilde{\tau}_i(\pi)$ in the first line, and using $w=\phi^{-1}_w(\sigma)={\rm norm}([\mathbf{1}^\top\mu_i])={\rm norm}([\mathbf{1}^\top\tau_i])$ in the second line, we have
  \begin{align*}
    \bar{G}_i(\sigma,\mu) & =\tilde{\mu}_i(\sigma)-\left(\mu_i-(\mathbf{1}^\top\mu)\sigma_i\right)=\tilde{\tau}_i(\pi)-\left(\tau_i-(\textstyle\sum_i\mathbf{1}^\top\tau_i)w_i\pi_i\right) \\
                          & =\tilde{\tau}_i(\pi)-\left(\tau_i-(\mathbf{1}^\top\tau_i)\pi_i\right)=G_i(\pi,\tau).
  \end{align*}

  (iii) In $(\hat{\pi},r,v)=M(\pi,\tau)$, each $v_i$ is given by the zero of $q_i(v_i)=0$ as equation~\eqref{equ_bisection} shows. In $(\hat{\sigma},\bar{r},\bar{v})=\bar{M}(\sigma,\mu)$, each $\bar{v}_i$ is given by the zero of $\bar{q}_i(\bar{v}_i)=0$ via a similar derivation from equation~\eqref{equ_kkt_simplex}.
  \begin{align*}
    q_i(v_i)             & =\mathbf{1}^\top\frac{\tau_i}{F_i(\pi)+v_i\mathbf{1}}-1                                                                                                                                                    \\
    \bar{q}_i(\bar{v}_i) & =\mathbf{1}^\top\frac{\mu_i}{\bar{F}_i(\sigma)+\bar{v}_i\mathbf{1}}-\mathbf{1}^\top\sigma_i=w_i\left(\mathbf{1}^\top\frac{\mu_i}{F_i(\pi)-(\pi_i^\top F_i(\pi))\mathbf{1}+\bar{v}_iw_i\mathbf{1}}-1\right)
  \end{align*}
  Comparing $q_i(v_i)$ and $\bar{q}_i(\bar{v}_i)$, it can be directly observed that when $\sigma=\phi(\pi,w)$ and $\mu_i=\tau_i$, their zeros $v_i$ and $\bar{v}_i$ satisfy $v_i=\bar{v}_iw_i-\pi_i^\top F_i(\pi)$. Then, we obtain $\bar{r}_i=r_i/w_i$ at this time from $r_i=F_i(\pi)+v_i\mathbf{1}$ and $\bar{r}_i=\bar{F}_i(\sigma)+\bar{v}_i\mathbf{1}$. Then, $\hat{\sigma}_i=w_i\hat{\pi}_i$ follows from $\hat{\pi}_i=\tau_i/r_i$ and $\hat{\sigma}_i=\mu_i/\bar{r}_i$.

  (iv) By $\bar{r}_i=r_i/w_i$ from the Brouwer functions and $\sigma_i=w_i\pi_i$, we have $\sigma_i\circ\bar{r}_i=\pi_i\circ r_i$, then it follows that ${\rm mcp}(\sigma,\mu)={\rm mcp}(\pi,\tau)$.
\end{proof}

Proposition~\ref{thmequ_chara} will be used to transform the computational formula of the formality-only path-following algorithm for ${\rm VI}(\bar{F},\Delta_{mn}^+)$ into the computational formula of the path-following algorithm for ${\rm VI}(F,\Delta_n^m)$, and we will deal with the difference between the index-wise variants and the original Brouwer functions and MCPs.

\subsection{Equivalence between differential properties}

In Paper 1, to maintain the constraint $\sigma\in\Delta$, $\sigma$ is represented by the softmax function such that $\sigma={\rm softmax}(\theta)$. Then, the differentiable properties are measured by differentiating with respect to $\theta$ instead of $\sigma$. In this subsection, we derive the equivalence between the differential properties of the two fixed-point bundles. First, we also use the softmax function to represent $\pi$ to maintain the constraint $\pi\in\Delta_n^m$.

\begin{theorem}
  \label{thm_softmax}
  For any $\sigma\in\Delta_{mn}^+$, $\pi\in\Delta_n^m$, and $w\in\Delta_m^\circ$, $\sigma=\phi(\pi,w)$ if and only if there exists $\theta$ such that
  \begin{equation}
    \label{equ_softmax}
    \sigma={\rm softmax}(\theta)=\frac{\exp(\theta)}{\sum_i\mathbf{1}^\top\exp(\theta_i)},~w_i=\frac{\mathbf{1}^\top\exp(\theta_i)}{\sum_i\mathbf{1}^\top\exp(\theta_i)},~\pi_i={\rm softmax}(\theta_i)=\frac{\exp(\theta_i)}{\mathbf{1}^\top\exp(\theta_i)}.
  \end{equation}
\end{theorem}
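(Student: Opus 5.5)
The proof splits into the two directions of the equivalence, each a direct computation from the definition of $\phi$ in equation~\eqref{equ_phi_base}. The one point that needs care is positivity: softmax produces strictly positive vectors, but $\Delta_{mn}^+$ only requires $\sum_a\sigma_{i,a}>0$.

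\emph{The ``if'' direction.} Assume $\theta$ is given and $\sigma,w,\pi$ are defined by equation~\eqref{equ_softmax}. Write $S=\sum_j\mathbf{1}^\top\exp(\theta_j)$. The following facts are immediate:
\begin{itemize}
  \item each $\pi_i$ lies in $\Delta_n$;
  \item $w$ lies in $\Delta_m^\circ$, since every $w_i>0$;
  \item $\sigma$ lies in $\Delta_{mn}^+$, since every $\sum_a\sigma_{i,a}=w_i>0$.
\end{itemize}
Then $w_i\pi_{i,a}=\frac{\mathbf{1}^\top\exp(\theta_i)}{S}\cdot\frac{\exp(\theta_{i,a})}{\mathbf{1}^\top\exp(\theta_i)}=\sigma_{i,a}$, so $\sigma=\phi(\pi,w)$.

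\emph{The ``only if'' direction.} Assume $\sigma=\phi(\pi,w)$. The natural choice is $\theta_{i,a}=\ln\pi_{i,a}+\ln w_i$, i.e.\ $\theta=\ln\sigma$. Then $\exp(\theta_{i,a})=w_i\pi_{i,a}=\sigma_{i,a}$, and:
\begin{itemize}
  \item $S=\sum_{i,a}\sigma_{i,a}=1$, so ${\rm softmax}(\theta)=\sigma$;
  \item $\mathbf{1}^\top\exp(\theta_i)=w_i$, which recovers $w_i$ after dividing by $S=1$;
  \item $\exp(\theta_i)/\mathbf{1}^\top\exp(\theta_i)=\pi_i$.
\end{itemize}
This is consistent with $\phi^{-1}$ in equation~\eqref{equ_phi_base}.

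\emph{The main obstacle.} The logarithm requires $\pi_{i,a}>0$. If some $\pi_{i,a}=0$, then $\sigma_{i,a}=0$, and no finite $\theta$ has softmax equal to $\sigma$. So the ``only if'' direction as literally stated fails on the boundary, and two repairs are possible:
\begin{itemize}
  \item Restrict $\pi$ to the relative interior of $\Delta_n^m$, equivalently $\sigma>0$. Paper 1 handles the softmax parametrization the same way, and path-following with barrier $\tau>0$ keeps iterates interior.
  \item Allow $\theta$ with entries in $\mathbb{R}\cup\{-\infty\}$, using the conventions $\exp(-\infty)=0$ and $\ln 0=-\infty$. Every $\mathbf{1}^\top\exp(\theta_i)=w_i>0$, so all denominators stay positive and both computations go through verbatim.
\end{itemize}
I would state the convention explicitly and then carry out the two computations above.
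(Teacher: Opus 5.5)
Your proof is correct and follows the paper's own argument: take $\theta=\ln\sigma$ to pass from $\sigma=\phi(\pi,w)$ to the softmax representation, and verify $\sigma_i=w_i\pi_i$ directly for the converse. Your boundary caveat is also valid. The paper's one-line proof tacitly needs $\sigma>0$ (equivalently, $\pi$ in the relative interior of $\Delta_n^m$) for $\ln\sigma$ to be finite, which is harmless in context because the algorithm's softmax updates keep all iterates strictly positive.
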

\begin{proof}
  Given $\sigma=\phi(\pi,w)$, we have the softmax representations with $\theta=\ln\sigma$. Given the softmax representations, $\sigma_i=w_i\pi_i$ can be directly verified.
\end{proof}

Theorem~\ref{thm_softmax} shows that if we represent $\sigma,\pi,w$ using softmax functions, then $\sigma=\phi(\pi,w)$ is automatically satisfied. If we additionally represent $\mu={\rm reshape}(\tau)$, then $(\sigma,\mu)=\Phi(\pi,\tau)$ is satisfied as long as $w(\theta)={\rm norm}([\mathbf{1}^\top\tau_i])$. Consequently, viewed as functions in $\theta$, the sections $\tilde{\tau}(\pi)$ and $\tilde{\mu}(\sigma)$ in Proposition~\ref{thmequ_chara}~(i) are in fact the same functions. Viewed as functions in $(\theta,\tau)$, the fixed-point bundle functions $\bar{G}(\sigma,\mu)$ and $G(\pi,\tau)$ in Proposition~\ref{thmequ_chara}~(ii) are in fact the same functions as long as $w(\theta)={\rm norm}([\mathbf{1}^\top\tau_i])$.

The differentiable properties are measured by differentiating with respect to $\theta$, so we derive the differentials of $\sigma,\pi,w$ with respect to $\theta$.
\begin{align*}
  d\sigma= & \frac{\exp(\theta)\circ d\theta}{\mathbf{1}^\top\exp(\theta)}-\frac{\mathbf{1}^\top\left(\exp(\theta)\circ d\theta\right)}{(\mathbf{1}^\top\exp(\theta))^2} \exp(\theta)=\sigma\circ d\theta-(\sigma^\top d\theta)\sigma                                                                                                \\
  d\pi_i=  & \frac{\exp(\theta_i)\circ d\theta_i}{\mathbf{1}^\top\exp(\theta_i)}-\frac{\mathbf{1}^\top\left(\exp(\theta_i)\circ d\theta_i\right)}{(\mathbf{1}^\top\exp(\theta_i))^2} \exp(\theta_i)=\pi_i\circ d\theta_i-(\pi_i^\top d\theta_i)\pi_i                                                                                 \\
  dw_i=    & \frac{\mathbf{1}^\top\left(\exp(\theta_i)\circ d\theta_i\right)}{\sum_j\mathbf{1}^\top\exp(\theta_j)}-\frac{\sum_j\mathbf{1}^\top\left(\exp(\theta_j)\circ d\theta_j\right)}{(\sum_j\mathbf{1}^\top\exp(\theta_j))^2}(\mathbf{1}^\top\exp(\theta_i))=w_i\pi_i^\top d\theta_i-w_i\textstyle\sum_jw_j\pi_j^\top d\theta_j
\end{align*}

The derivation shows that the differentials $d\sigma,d\pi,dw$ are as equation~\eqref{equ_differential} shows.
\begin{equation}
  \label{equ_differential}
  d\sigma/\sigma=\left(I-\mathbf{1}\sigma^\top\right)d\theta,~d\pi_i/\pi_i=\left(I-\mathbf{1}\pi_i^\top\right)d\theta_i,~dw/w=\left(I-\mathbf{1}w^\top\right)[\pi_i^\top d\theta_i]
\end{equation}

Note that ${\rm softmax}(\theta+k\mathbf{1})={\rm softmax}(\theta)$, indicating that softmax is invariant under adding a scaled $\mathbf{1}$, and adding a scaled $\mathbf{1}$ to $d\theta$ does not change the differentials $d\sigma$ or $d\pi$. Thus, we denote $\overline{d\theta}$ and $\widetilde{d\theta}$ in equation~\eqref{equ_dtheta} such that $\sigma^\top\overline{d\theta}=0$ and $\pi_i^\top\widetilde{d\theta}_i=0$, where $\overline{d\theta}$ has already appeared in Paper 1.
\begin{equation}
  \label{equ_dtheta}
  \overline{d\theta}:=\left(I-\mathbf{1}\sigma^\top\right)d\theta,\quad\widetilde{d\theta}_i:=\left(I-\mathbf{1}\pi_i^\top\right)d\theta_i
\end{equation}

As in Paper 1, instead of using the plain $d\theta$, we actually use $\overline{d\theta}$ and $\widetilde{d\theta}$ as the differentials measuring the differentiable properties of the fixed-point bundles. First, we study their relation.

\begin{lemma}
  \label{thm_dtheta}
  Let $(\sigma,\pi,w)$ be subject to equation~\eqref{equ_softmax}, then for any $d\theta$, the following statements hold.
  \begin{enumerate}
    \item $\sigma^\top\widetilde{d\theta}=0$.
    \item $\pi_i^\top\overline{d\theta}_i=0$ for every index $i$ if and only if $\widetilde{d\theta}=\overline{d\theta}$, which is equivalently $dw=0$.
  \end{enumerate}
\end{lemma}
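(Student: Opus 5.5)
Both parts follow from writing everything blockwise and using the identities $\sigma_i=w_i\pi_i$, $\mathbf{1}^\top\pi_i=1$, $\sum_i w_i=1$ from Theorem~\ref{thm_softmax}. The plan is to express $\overline{d\theta}_i$ in terms of $d\theta_i$ and the scalar $s:=\sigma^\top d\theta=\sum_j w_j\pi_j^\top d\theta_j$. By equation~\eqref{equ_dtheta}, $\overline{d\theta}_i=d\theta_i-s\mathbf{1}$ and $\widetilde{d\theta}_i=d\theta_i-(\pi_i^\top d\theta_i)\mathbf{1}$.

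For (i), compute $\sigma^\top\widetilde{d\theta}=\sum_i w_i\pi_i^\top\widetilde{d\theta}_i$. Each term vanishes because $\pi_i^\top\widetilde{d\theta}_i=\pi_i^\top d\theta_i-(\pi_i^\top d\theta_i)(\pi_i^\top\mathbf{1})=0$ using $\mathbf{1}^\top\pi_i=1$. This is immediate.

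For (ii), first compute $\pi_i^\top\overline{d\theta}_i=\pi_i^\top d\theta_i-s$. The condition $\pi_i^\top\overline{d\theta}_i=0$ for all $i$ is therefore $\pi_i^\top d\theta_i=s$ for all $i$.

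\textbf{Equivalence with $\widetilde{d\theta}=\overline{d\theta}$.} The difference is $\widetilde{d\theta}_i-\overline{d\theta}_i=(s-\pi_i^\top d\theta_i)\mathbf{1}$. It vanishes for every $i$ exactly when $\pi_i^\top d\theta_i=s$ for all $i$, which is the condition just obtained.

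\textbf{Equivalence with $dw=0$.} By equation~\eqref{equ_differential}, $dw_i/w_i=\pi_i^\top d\theta_i-\sum_j w_j\pi_j^\top d\theta_j=\pi_i^\top d\theta_i-s$. Since $w_i>0$ on $\Delta_m^\circ$, $dw=0$ if and only if $\pi_i^\top d\theta_i=s$ for all $i$, which closes the chain.

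The only point needing care is recognizing that the scalar $\sigma^\top d\theta$ equals the $w$-weighted average of the $\pi_i^\top d\theta_i$. It is also worth checking that the condition ``all $\pi_i^\top d\theta_i$ are equal'' coincides with ``all equal $s$''. This holds automatically, because if all $\pi_i^\top d\theta_i$ equal a common value $c$, then $s=\sum_j w_jc=c$ since $\sum_j w_j=1$. Otherwise the argument is a direct computation, with no real obstacle.
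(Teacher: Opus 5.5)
Your proposal is correct and matches the paper's proof essentially step for step. For (i) you write $\sigma^\top\widetilde{d\theta}=\sum_j w_j\pi_j^\top\widetilde{d\theta}_j=0$, and for (ii) you reduce all three conditions to $\pi_i^\top d\theta_i=\sum_j w_j\pi_j^\top d\theta_j$ for every $i$; your explicit appeal to $w_i>0$ for the $dw=0$ equivalence is a small detail the paper leaves implicit.
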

\begin{proof}
  (i) For $\sigma^\top\widetilde{d\theta}$, we can derive $\sigma^\top\widetilde{d\theta}=\textstyle\sum_jw_j\pi_j^\top\widetilde{d\theta}_j$. Then, we have $\sigma^\top\widetilde{d\theta}=0$ since $\pi_j^\top\widetilde{d\theta}_j=0$.

  (ii) For $\pi_i^\top\overline{d\theta}_i$, $\overline{d\theta}-\widetilde{d\theta}$, and $dw$, we can derive the following three equations.
  \begin{align*}
    \pi_i^\top\overline{d\theta}_i             & =\pi_i^\top\left(d\theta_i-(\sigma^\top d\theta)\mathbf{1}\right)=\pi_i^\top d\theta_i-\textstyle\sum_jw_j\pi_j^\top d\theta_j                                               \\
    \overline{d\theta}_i-\widetilde{d\theta}_i & =d\theta_i-(\sigma^\top d\theta)\mathbf{1}-(d\theta_i-(\pi_i^\top d\theta_i)\mathbf{1})=(\pi_i^\top d\theta_i)\mathbf{1}-(\textstyle\sum_jw_j\pi_j^\top d\theta_j)\mathbf{1} \\
    dw/w                                       & =\left(I-\mathbf{1}w^\top\right)[\pi_i^\top d\theta_i]=\left[\pi_i^\top d\theta_i-\textstyle\sum_jw_j\pi_j^\top d\theta_j\right].
  \end{align*}

  Therefore, $\pi_i^\top\overline{d\theta}_i=0$ for every index $i$, $\widetilde{d\theta}=\overline{d\theta}$, and $dw=0$ are all equivalent to $\pi_i^\top d\theta_i=\sum_jw_j\pi_j^\top d\theta_j$ for every index $i$.
\end{proof}

The differentiable properties of the fixed-point bundle are governed by the section $\tilde{\mu}(\sigma)$ and $\tilde{\tau}_i(\pi)$, such that their differentials appear in the singular manifold, differential equation, Newton equation, and gradient of related functions. In Paper 1, $d\tilde{\mu}(\sigma)$ is measured with respect to $\overline{d\theta}$. Next, we measure $d\tilde{\tau}_i(\pi)$ with respect to $\widetilde{d\theta}$ and derive the relation to $d\tilde{\mu}(\sigma)$ since $\tilde{\mu}(\sigma)=\tilde{\tau}_i(\pi)$. In the following equations, let $\delta_{ij}$ be the Kronecker delta, such that $\delta_{ij}=1$ when $i=j$ and $\delta_{ij}=0$ when $i\neq j$.

\begin{theorem}
  \label{thmequ_diff}
  Let $\sigma=\phi(\pi,w)$, then the differentials of $\tilde{\tau}(\pi)$ and $\tilde{\mu}(\sigma)$ satisfy
  \begin{equation}
    d\tilde{\tau}_i(\pi)=d\tilde{\mu}_i(\sigma)=\sigma_i\circ\bar{J}_i(\sigma)\overline{d\theta}=\sigma_i\circ\bar{J}_i(\sigma)\widetilde{d\theta}=\pi_i\circ J_i(\pi)\widetilde{d\theta},
  \end{equation}
  where $J(\pi)$ and $\bar{J}(\sigma)$ are given by the following equations.
  \begin{equation}
    \label{equ_j}
    \begin{aligned}
      J_{ij}(\pi)     & :=\left(I-\mathbf{1}\pi_i^\top\right)\left(\frac{\partial F_i(\pi)}{\partial\pi_j}\circ\pi_j+\delta_{ij}\left(\left(I-\mathbf{1}\pi_i^\top\right)F_i(\pi)\right)\circ I\right)\left(I-\mathbf{1}\pi_j^\top\right)+kw_iw_j\mathbf{1}\pi_j^\top    \\
      \bar{J}(\sigma) & :=\left(I-\mathbf{1}\sigma^\top\right)\left(\frac{\partial \bar{F}(\sigma)}{\partial\sigma}\circ\sigma+\left(\left(I-\mathbf{1}\sigma^\top\right)\bar{F}(\sigma)\right)\circ I\right)\left(I-\mathbf{1}\sigma^\top\right)+k\mathbf{1}\sigma^\top \\
    \end{aligned}
  \end{equation}
  Consequently, $w_i\bar{J}_{ij}(\sigma)=J_{ij}(\pi)$.
\end{theorem}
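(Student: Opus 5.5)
The plan is to compute both differentials in the common coordinate $\theta$ of Theorem~\ref{thm_softmax} and then compare the resulting matrices block by block. The first equality $d\tilde{\tau}_i(\pi)=d\tilde{\mu}_i(\sigma)$ needs no computation. By Theorem~\ref{thm_softmax} and Proposition~\ref{thmequ_chara}~(i), $\tilde{\tau}(\pi(\theta))$ and $\tilde{\mu}(\sigma(\theta))$ are the same function of $\theta$, so their differentials coincide. The second equality $d\tilde{\mu}(\sigma)=\sigma\circ\bar{J}(\sigma)\overline{d\theta}$ I take from Paper 1, applied as a formal computation to $\bar{F}$ on $\Delta_{mn}^+$. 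This is legitimate because it is only a chain-rule identity and uses nothing global. The term $k\mathbf{1}\sigma^\top$ contributes nothing there, since $\sigma^\top\overline{d\theta}=0$.

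Next I replace $\overline{d\theta}$ by $\widetilde{d\theta}$ inside $\bar{J}$. By the proof of Lemma~\ref{thm_dtheta}~(ii), the difference $\overline{d\theta}-\widetilde{d\theta}$ is block-constant, equal to $c_i\mathbf{1}$ on block $i$ with $c_i=\pi_i^\top d\theta_i-\sum_jw_j\pi_j^\top d\theta_j$, and it satisfies $\sigma^\top(\overline{d\theta}-\widetilde{d\theta})=\sum_iw_ic_i=0$. So it suffices to show that $\bar{J}(\sigma)$ annihilates every vector $d=[c_i\mathbf{1}]$ with $\sum_iw_ic_i=0$.
\begin{itemize}
\item The right factor $(I-\mathbf{1}\sigma^\top)$ leaves $d$ unchanged, and the term $k\mathbf{1}\sigma^\top d$ vanishes.
\item Applying $\frac{\partial\bar{F}}{\partial\sigma}\circ\sigma$ to $d$ is the directional derivative of $\bar{F}$ along $d\sigma_i=c_iw_i\pi_i$. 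This direction keeps $\pi$ fixed and moves only $w$, with $dw_i=c_iw_i$.
\item By \eqref{equ_F_simplex}, along this direction $\bar{F}_i$ scales like $1/w_i$, so its change is $-c_i\bar{F}_i(\sigma)$.
\item The diagonal term is $((I-\mathbf{1}\sigma^\top)\bar{F})\circ d$. Because $\sigma^\top\bar{F}(\sigma)=\sum_iw_i\pi_i^\top\bar{F}_i=0$, it reduces to $c_i\bar{F}_i(\sigma)$.
\end{itemize}
The last two contributions cancel, so $\bar{J}d=0$. This gives $\sigma_i\circ\bar{J}_i\overline{d\theta}=\sigma_i\circ\bar{J}_i\widetilde{d\theta}$.

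For the last equality I differentiate $\tilde{\tau}_i(\pi)=(I-\pi_i\mathbf{1}^\top)(\pi_i\circ F_i(\pi))$ directly, using $d\pi_j=\pi_j\circ\widetilde{d\theta}_j$ from \eqref{equ_differential}. Factoring out $\pi_i\circ$ and using $\pi_j^\top\widetilde{d\theta}_j=0$ should produce exactly the bracketed expression in $J_{ij}(\pi)$. The right projector $(I-\mathbf{1}\pi_j^\top)$ and the term $kw_iw_j\mathbf{1}\pi_j^\top$ can be inserted freely, since they act trivially on $\widetilde{d\theta}_j$.

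The consequence $w_i\bar{J}_{ij}=J_{ij}$ is the step I expect to be the main obstacle. It is an identity of matrices, not merely of their action on the differentials above, and the two sides carry different projectors: $(I-\mathbf{1}\sigma^\top)$ for $\bar{J}$ versus $(I-\mathbf{1}\pi_j^\top)$ for $J$. I would proceed as follows.
\begin{enumerate}
\item Compute $\partial\bar{F}_i/\partial\sigma_j$ by the chain rule through $\phi^{-1}$, writing $\sigma_j=w_j\pi_j$ so that the $1/w_i$ prefactor and the $w$-dependence separate.
\item Substitute into $\bar{J}_{ij}$ and use $\sigma^\top\bar{F}=0$ to simplify the diagonal part.
\item Use the annihilation result for block-constant vectors to trade $(I-\mathbf{1}\sigma^\top)$ for $(I-\mathbf{1}\pi_j^\top)$ on the right.
\item Match the $k$ terms: $w_ik\mathbf{1}\sigma_j^\top=kw_iw_j\mathbf{1}\pi_j^\top$.
\end{enumerate}
If an exact matrix identity fails on the complementary subspace spanned by $\mathbf{1}$-directions, I would instead settle for the statement as the identity $w_i\bar{J}_{ij}\widetilde{d\theta}_j=J_{ij}\widetilde{d\theta}_j$ on tangent directions. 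That weaker identity is what the later sections need.
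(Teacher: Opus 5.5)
Your first and last equalities follow the paper's proof. You identify $\tilde{\tau}$ and $\tilde{\mu}$ as the same function of $\theta$, and you differentiate $\tilde{\tau}_i$ directly with $d\pi_j=\pi_j\circ\widetilde{d\theta}_j$. The other two steps take a different route, and both of yours are valid.

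For $\sigma_i\circ\bar{J}_i\overline{d\theta}=\sigma_i\circ\bar{J}_i\widetilde{d\theta}$, the paper argues by invariance. The differential $d\tilde{\tau}_i$ depends on $d\theta$ only through $d\pi$, and $d\theta$ and $\widetilde{d\theta}$ give the same $d\pi$. So $\sigma_i\circ\bar{J}_i(I-\mathbf{1}\sigma^\top)(\cdot)$ may be evaluated at $\widetilde{d\theta}$, and $\sigma^\top\widetilde{d\theta}=0$ then removes the projector. Your explicit check that $\bar{J}$ kills $[c_i\mathbf{1}]$ whenever $\sum_iw_ic_i=0$ is correct. It gives the slightly stronger $\bar{J}\overline{d\theta}=\bar{J}\widetilde{d\theta}$, without the factor $\sigma_i$.

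For the consequence, you worry that it is a matrix identity rather than a statement about differentials. The paper's shortcut removes that worry. Write $\bar{J}_0$ and $J_0$ for the $k=0$ parts. The differential identities hold for every $d\theta\in\mathbb{R}^{mn}$. By idempotence, $\bar{J}_0=\bar{J}_0(I-\mathbf{1}\sigma^\top)$ and $J_{0ij}=J_{0ij}(I-\mathbf{1}\pi_j^\top)$, and these same projectors kill the $k$-terms. Hence $\sigma_i\circ\sum_j\bar{J}_{0ij}d\theta_j=\pi_i\circ\sum_jJ_{0ij}d\theta_j$ for all $d\theta$. Cancelling $\pi_i>0$ gives $w_i\bar{J}_{0ij}=J_{0ij}$, and the $k$-terms match as in your item 4.

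Your direct symbolic route also closes, and more easily than the paper suggests; the paper calls it very complicated and only checks it numerically. Take arbitrary $x$ and set $\bar{x}=(I-\mathbf{1}\sigma^\top)x$.
\begin{itemize}
\item Along $d\sigma=\sigma\circ\bar{x}$ you get $dw_j=w_j\pi_j^\top\bar{x}_j$ and $d\pi_j=\pi_j\circ(I-\mathbf{1}\pi_j^\top)\bar{x}_j$.
\item Using $\pi_i^\top(I-\mathbf{1}\pi_i^\top)F_i=0$ and $\sigma^\top\bar{F}=0$, the $i$-th block of the middle factor of $\bar{J}_0$ applied to $\bar{x}$ collapses to $\frac{1}{w_i}\sum_jJ_{0ij}\bar{x}_j$.
\item That vector is orthogonal to $\sigma$, so the left projector acts trivially.
\item Finally $J_{0ij}\bar{x}_j=J_{0ij}x_j$ because $J_{0ij}\mathbf{1}=0$.
\end{itemize}
Equivalently for your item 3, your annihilation result together with $\bar{J}_0\mathbf{1}=0$ shows that $\bar{J}_0$ kills all block-constant vectors. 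So the exact matrix identity holds. Your fallback to a tangent-direction identity is never needed, and it would prove less than the theorem asserts.
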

\begin{proof}
  (1) Proving $d\tilde{\tau}_i(\pi)=d\tilde{\mu}_i(\sigma)=\sigma_i\circ\bar{J}_i(\sigma)\overline{d\theta}$.

  In Paper 1, we have $d\tilde{\mu}_i(\sigma)=\sigma_i\circ\bar{J}_i(\sigma)\overline{d\theta}$. Proposition~\ref{thmequ_chara} shows that $\tilde{\tau}_i(\pi)=\tilde{\mu}_i(\sigma)$, thus $d\tilde{\tau}_i(\pi)=d\tilde{\mu}_i(\sigma)$.

  (2) Proving $d\tilde{\tau}_i(\pi)=\sigma_i\circ\bar{J}_i(\sigma)\overline{d\theta}=\sigma_i\circ\bar{J}_i(\sigma)\widetilde{d\theta}$.

  Note that $d\theta$ and $\widetilde{d\theta}$ lead to the same $d\pi$ in equation~\eqref{equ_differential} since $I-\mathbf{1}\pi_i^\top$ is idempotent. Then, substituting $d\theta$ with $\widetilde{d\theta}$ does not change $d\tilde{\tau}_i(\pi)$ since $\tilde{\tau}_i(\pi)$ depends only on $\pi$. Thus, we have the following derivation, where $(I-\mathbf{1}\sigma^\top)\widetilde{d\theta}=\widetilde{d\theta}$ by Lemma~\ref{thm_dtheta}~(i).
  \begin{align*}
    d\tilde{\tau}_i(\pi)=\sigma_i\circ\bar{J}_i(\sigma)\overline{d\theta}=\sigma_i\circ\bar{J}_i(\sigma)\left(I-\mathbf{1}\sigma^\top\right)d\theta=\sigma_i\circ\bar{J}_i(\sigma)\left(I-\mathbf{1}\sigma^\top\right)\widetilde{d\theta}=\sigma_i\circ\bar{J}_i(\sigma)\widetilde{d\theta}
  \end{align*}

  (3) Proving $d\tilde{\tau}_i(\pi)=\pi_i\circ J_i(\pi)\widetilde{d\theta}$.

  The differential $d\tilde{\tau}(\pi)$ is derived as follows. In the third line, we use $\mathbf{1}^\top d\pi_i=0$. In the last line, we use $d\pi_i/\pi_i=\widetilde{d\theta}_i=(I-\mathbf{1}\pi_i^\top)\widetilde{d\theta}_i$.
  \begin{align*}
     & d\tilde{\tau}_i(\pi) =\left(I-\pi_i\mathbf{1}^\top\right)\left(\pi_i\circ\sum_j\frac{\partial F_i(\pi)}{\partial\pi_j}d\pi_j+F_i(\pi)\circ d\pi_i\right)-\mathbf{1}^\top\left(\pi_i\circ F_i(\pi)\right)d\pi_i                                                                                                \\
     & =\left(I-\pi_i\mathbf{1}^\top\right)\left(\pi_i\circ\sum_j\frac{\partial F_i(\pi)}{\partial\pi_j}d\pi_j+\left(F_i(\pi)\circ I-(\pi_i^\top F_i(\pi)) I\right)d\pi_i\right)-\left(\pi_i^\top F_i(\pi)\right)\pi_i\mathbf{1}^\top d\pi_i                                                                         \\
     & =\pi_i\circ\left(I-\mathbf{1}\pi_i^\top\right)\left(\sum_j\left(\frac{\partial F_i(\pi)}{\partial\pi_j}\circ\pi_j\right)\frac{d\pi_j}{\pi_j}+\left(\left(I-\mathbf{1}\pi_i^\top\right)F_i(\pi)\right)\circ\frac{d\pi_i}{\pi_i}\right)                                                                         \\
     & =\pi_i\circ\left(I-\mathbf{1}\pi_i^\top\right)\left(\sum_j\left(\frac{\partial F_i(\pi)}{\partial\pi_j}\circ\pi_j\right)\left(I-\mathbf{1}\pi_j^\top\right)\widetilde{d\theta}_j+\left(\left(I-\mathbf{1}\pi_i^\top\right)F_i(\pi)\right)\circ\left(I-\mathbf{1}\pi_i^\top\right)\widetilde{d\theta}_i\right)
  \end{align*}
  Substituting $J_{ij}(\pi)$ and using $\pi_i^\top\widetilde{d\theta}_i=0$, we obtain $d\tilde{\tau}_i(\pi)=\pi_i\circ J_i(\pi)\widetilde{d\theta}$.

  (4) Proving $w_i\bar{J}_{ij}(\sigma)=J_{ij}(\pi)$.

  Let $\bar{J}_{0ij}(\sigma)$ and $J_{0ij}(\pi)$ be the corresponding matrices with the arbitrary scalar $k$ set to $0$. Then, we have the following derivation, where $k\mathbf{1}\sigma^\top$ and $kw_iw_j\mathbf{1}\pi_j^\top$ are eliminated by $(I-\mathbf{1}\sigma^\top)$ and $(I-\mathbf{1}\pi_j^\top)$, and $(I-\mathbf{1}\sigma^\top)$ and $(I-\mathbf{1}\pi_j^\top)$ are absorbed into $\bar{J}_{0ij}(\sigma)$ and $J_{0ij}(\pi)$ by idempotence.
  \begin{align*}
    \sigma_i\circ\bar{J}_i(\sigma)\widetilde{d\theta} & =\sigma_i\circ\bar{J}_i(\sigma)\left(I-\mathbf{1}\sigma^\top\right)d\theta=\sigma_i\circ\textstyle\sum_j \bar{J}_{0ij}(\sigma)d\theta_j \\
    \pi_i\circ J_i(\pi)\widetilde{d\theta}            & =\pi_i\circ\textstyle\sum_j J_{ij}(\pi)\left(I-\mathbf{1}\pi_j^\top\right)d\theta_j=\pi_i\circ\textstyle\sum_j J_{0ij}(\pi)d\theta_j
  \end{align*}

  Then, by $\sigma_i\circ\bar{J}_i(\sigma)\widetilde{d\theta}=\pi_i\circ J_i(\pi)\widetilde{d\theta}$ for every $d\theta$, we have $w_i\bar{J}_{0ij}(\sigma)=J_{0ij}(\pi)$. From $w_i(k\mathbf{1}\sigma^\top)_{ij}=w_i(k\mathbf{1}\sigma_j^\top)=kw_iw_j\mathbf{1}\pi_j^\top$, we obtain $w_i\bar{J}_{ij}(\sigma)=J_{ij}(\pi)$.
\end{proof}

Theorem~\ref{thmequ_diff} establishes the equivalence between the differentials of $\tilde{\tau}(\pi)$ and $\tilde{\mu}(\sigma)$, which govern the differential properties of the fixed-point bundles.
Furthermore, we obtain $w_i\bar{J}_{ij}(\sigma)=J_{ij}(\pi)$ between the two Jacobian-like matrices, which play a central role in the predictor-corrector algorithm.
The derivative of $\bar{F}(\sigma)$ is given by
\begin{equation}
  \frac{\partial\bar{F}_i(\sigma)}{\partial\sigma_j}=\frac{1}{w_iw_j}\left(I-\mathbf{1}\pi_i^\top\right)\frac{\partial F_i(\pi)}{\partial\pi_j}\left(I-\mathbf{1}\pi_i^\top\right)^\top-\delta_{ij}\frac{1}{w_i}\left(\bar{F}_i(\sigma)\mathbf{1}^\top+\mathbf{1}\bar{F}_i(\sigma)^\top\right).
\end{equation}
It would be very complicated to directly verify $w_i\bar{J}_{ij}(\sigma)=J_{ij}(\pi)$ via symbolic derivation from equation~\eqref{equ_j}, but we have verified in numerical experiments that no significant difference can be found between ${\rm Diag}[w_iI]\bar{J}(\sigma)$ and $J(\pi)$ with $k=0$.

\section{Path-following on the fixed-point bundles}

\subsection{Transforming the predictor-corrector framework}

Paper 1 adopts the predictor-corrector framework to perform path-following on the fixed-point bundle. This framework alternates between predictor steps and corrector steps. A predictor step updates along the tangent direction of the fixed-point bundle, which is given by the differential equations of $\bar{G}(\sigma,\mu)=0$ or $G(\pi,\tau)=0$. A corrector step updates onto the fixed-point bundle, which can be given by the Newton equations of $\bar{G}(\sigma,\mu)=0$ or $G(\pi,\tau)=0$, or by the gradients of ${\rm mcp}(\sigma,\mu)$ and ${\rm mcp}(\pi,\tau)$. We have the formality-only predictor and corrector for $\bar{G}(\sigma,\mu)=0$ from Paper 1. In this subsection, we use the equivalence established previously to obtain the predictor and corrector for $G(\pi,\tau)=0$.

\noindent\textbf{Differential equations and Newton equations of $\bar{G}(\sigma,\mu)=0$ and $G(\pi,\tau)=0$.}
In Paper 1, the differential equation and Newton equation of $\bar{G}(\sigma,\mu)=0$ are given by equations~\eqref{equ_diff_simplex} and~\eqref{equ_newton_simplex}.
\begin{equation}
  \label{equ_diff_simplex}
  \left(\bar{J}(\sigma)+(\mathbf{1}^\top\mu)I\right)\overline{d\theta}=\left(I-\mathbf{1}\sigma^\top\right)(d\mu/\sigma)
\end{equation}
\begin{equation}
  \label{equ_newton_simplex}
  \begin{aligned}
    \left(\bar{J}(\sigma)+(\mathbf{1}^\top\mu)I\right)\overline{d\theta}=-\bar{G}(\sigma,\mu)/\sigma
  \end{aligned}
\end{equation}

\begin{proposition}
  \label{thm_diff_newton}
  Let $(\sigma,\mu)=\Phi(\pi,\tau)$ and $\phi^{-1}_w(\sigma)={\rm norm}([\mathbf{1}^\top\mu_i])$. In both differential equation~\eqref{equ_diff_simplex} and Newton equation~\eqref{equ_newton_simplex}, we have $\pi_i^\top\overline{d\theta}_i=0$ for every index $i$. Consequently, $\overline{d\theta}=\widetilde{d\theta}$ and $dw=0$.
\end{proposition}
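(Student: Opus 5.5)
The plan is to test each block of the linear system against $\pi_i$: left-multiply the $i$-th block row of \eqref{equ_diff_simplex} and of \eqref{equ_newton_simplex} by $\pi_i^\top$. I will show that every term except $(\mathbf{1}^\top\mu)\,\pi_i^\top\overline{d\theta}_i$ vanishes. Since $\mu\in\mathbb{R}^{mn}_{\pm}$ gives $\mathbf{1}^\top\mu\neq0$, this forces $\pi_i^\top\overline{d\theta}_i=0$ for every $i$. Lemma~\ref{thm_dtheta}~(ii) then gives $\overline{d\theta}=\widetilde{d\theta}$ and $dw=0$.

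\textbf{Step 1: the Jacobian term.} I will not expand $\bar{J}$ directly. Instead I use Theorem~\ref{thmequ_diff}, which gives $w_i\bar{J}_{ij}(\sigma)=J_{ij}(\pi)$. Each $J_{ij}(\pi)$ has left factor $(I-\mathbf{1}\pi_i^\top)$, and $\pi_i^\top(I-\mathbf{1}\pi_i^\top)=0$. Hence
$$w_i\,\pi_i^\top\bar{J}_i(\sigma)\overline{d\theta}=k w_i\textstyle\sum_j w_j\pi_j^\top\overline{d\theta}_j=k w_i\,\sigma^\top\overline{d\theta}=0,$$
because $\sigma^\top\overline{d\theta}=0$ by \eqref{equ_dtheta}. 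The left-hand side of block $i$ therefore reduces to $(\mathbf{1}^\top\mu)\pi_i^\top\overline{d\theta}_i$.

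\textbf{Step 2: the right-hand sides.} For any vector $y$, block $i$ of $(I-\mathbf{1}\sigma^\top)y$, tested against $\pi_i$, equals $\pi_i^\top y_i-\sigma^\top y$.

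For the Newton equation \eqref{equ_newton_simplex}, take $y=-\bar{G}/\sigma$. Then $\pi_i^\top(\bar{G}_i/\sigma_i)=\mathbf{1}^\top\bar{G}_i/w_i$ and $\sigma^\top(\bar{G}/\sigma)=\mathbf{1}^\top\bar{G}$. By Proposition~\ref{thmequ_chara}~(ii), $\bar{G}_i=G_i=(I-\pi_i\mathbf{1}^\top)(\cdots)$, so $\mathbf{1}^\top\bar{G}_i=0$ for each $i$. Both terms therefore vanish.

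For the differential equation \eqref{equ_diff_simplex}, take $y=d\mu/\sigma$. The right-hand side becomes
$$\frac{\mathbf{1}^\top d\mu_i}{w_i}-\mathbf{1}^\top d\mu .$$
This vanishes exactly when $\mathbf{1}^\top d\mu_i=w_i\,\mathbf{1}^\top d\mu$, i.e.\ when the perturbation $d\mu$ preserves ${\rm norm}([\mathbf{1}^\top\mu_i])$.

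\textbf{Main obstacle.} The delicate point is justifying this last condition on $d\mu$. It cannot be derived from $dw=0$, since that is the conclusion. It has to come from the hypothesis $\phi^{-1}_w(\sigma)={\rm norm}([\mathbf{1}^\top\mu_i])$ together with the form of the predictor's barrier direction in Paper 1. I plan to check that the directions used there are admissible: $d\mu\propto\mu$ gives $\mathbf{1}^\top d\mu_i=c\,\mathbf{1}^\top\mu_i=c\,w_i\mathbf{1}^\top\mu$, and $d\mu\propto\sigma$ gives $\mathbf{1}^\top d\mu_i=c\,w_i$. Both keep $w$ fixed, so the argument goes through for them.

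If Paper 1 instead uses a more general $d\mu$, I will restrict the statement to variations tangent to the slice $\{{\rm norm}([\mathbf{1}^\top\mu_i])=w\}$. This is the natural reading of the hypothesis as a constraint along the path, and it is the case needed for the path-following on the slice fixed by $w_{\rm init}$.
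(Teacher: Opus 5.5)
Your proposal is correct and is essentially the paper's argument. The paper first multiplies both systems by ${\rm Diag}[w_iI]$ and rewrites them via $w_i\bar{J}_{ij}(\sigma)=J_{ij}(\pi)$ before testing the $i$-th block row against $\pi_i^\top$, whereas you test directly. You are also more careful with the $k\,w_iw_j\mathbf{1}\pi_j^\top$ term: the paper's claim ``$\pi_i^\top J_i(\pi)=0$'' only holds once it is applied to $\overline{d\theta}$ and combined with $\sigma^\top\overline{d\theta}=0$, as you do.

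Your condition $\mathbf{1}^\top d\mu_i=w_i\,\mathbf{1}^\top d\mu$ for the differential equation is not a gap in your proof. It is the hypothesis the paper uses silently when it turns ${\rm Diag}[w_iI](I-\mathbf{1}\sigma^\top)(d\mu/\sigma)$ into $[(I-\mathbf{1}\pi_i^\top)(d\tau_i/\pi_i)]$. That step, and indeed the conclusion $dw=0$, fails for general $d\mu$. The condition is satisfied by the barrier-reduction direction $d\mu=-\eta\mu$ used in Theorem~\ref{thmequ_update}, so your restricted reading is the right one.
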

\begin{proof}
  Differential equation~\eqref{equ_diff_simplex} and Newton equation~\eqref{equ_newton_simplex} can be derived as follows. We first multiply both sides by ${\rm Diag}[w_i I]$ in the first lines, and then substitute $w_i\bar{J}_{ij}(\sigma)=J_{ij}(\pi)$ and $w={\rm norm}([\mathbf{1}^\top\tau_i])$ in the second lines.
  \begin{align*}
    {\rm Diag}\left[w_i I\right]\left(\bar{J}(\sigma)+(\mathbf{1}^\top\mu)I\right)\overline{d\theta} & ={\rm Diag}\left[w_i I\right]\left(I-\mathbf{1}\sigma^\top\right)(d\mu/\sigma) \\
    \left(J(\pi)+{\rm Diag}\left[(\mathbf{1}^\top\tau_i)I\right]\right)\overline{d\theta}            & =\left[\left(I-\mathbf{1}\pi_i^\top\right)(d\tau_i/\pi_i)\right]
  \end{align*}
  \begin{align*}
    {\rm Diag}\left[w_i I\right]\left(\bar{J}(\sigma)+(\mathbf{1}^\top\mu)I\right)\overline{d\theta} & =-{\rm Diag}\left[w_i I\right]\left(\bar{G}(\sigma,\mu)/\sigma\right) \\
    \left(J(\pi)+{\rm Diag}\left[(\mathbf{1}^\top\tau_i)I\right]\right)\overline{d\theta}            & =-\left[G_i(\pi,\tau)/\pi_i\right]
  \end{align*}

  Then, multiplying the $i$-th row blocks on both sides by $\pi_i^\top$ and using $\pi_i^\top J_i(\pi)=0$, we obtain $\pi_i^\top\overline{d\theta}_i=0$. Then, $\overline{d\theta}=\widetilde{d\theta}$ and $dw=0$ follow from Lemma~\ref{thm_dtheta}~(ii).
\end{proof}

Proposition~\ref{thmequ_chara} already shows that $\bar{G}(\sigma,\mu)=0$ and $G(\pi,\tau)=0$ are in fact the same function with the same variable and the same value. Then, from the proof of Proposition~\ref{thm_diff_newton} and $\overline{d\theta}=\widetilde{d\theta}$, it immediately follows that the differential equation and Newton equation of $G(\pi,\tau)=0$ are given by equations~\eqref{equ_diff} and~\eqref{equ_newton}.
\begin{equation}
  \label{equ_diff}
  \left(J(\pi)+{\rm Diag}\left[(\mathbf{1}^\top\tau_i)I\right]\right)\widetilde{d\theta}=\left[\left(I-\mathbf{1}\pi_i^\top\right)(d\tau_i/\pi_i)\right]
\end{equation}
\begin{equation}
  \label{equ_newton}
  \left(J(\pi)+{\rm Diag}\left[(\mathbf{1}^\top\tau_i)I\right]\right)\widetilde{d\theta}=\left[\left(I-\mathbf{1}\pi_i^\top\right)(F_i(\pi)-\tau_i/\pi_i)\right]
\end{equation}

These equations also indicate that $(\pi,\tau)\in E$ is a singular point if and only if $\sum_i\mathbf{1}^\top\tau_i$ is an eigenvalue of $-[(1/w_i)J_{ij}(\pi)]$. In Paper 1, $(\sigma,\mu)\in\bar{E}$ is a singular point if and only if $\mathbf{1}^\top\mu$ is an eigenvalue of $-\bar{J}(\sigma)$. This aligns with the homeomorphism between $(\sigma,\mu)$ and $(\pi,\tau)$.

\noindent\textbf{Gradients of ${\rm mcp}(\sigma,\mu)$ and ${\rm mcp}(\pi,\tau)$.}
Recall that ${\rm mcp}(\sigma,\mu)$ is a variant of the original MCP characterizing ${\rm VI}(\bar{F},\Delta_{mn}^+)$ in Paper 1. Denote the original MCP as $\overline{\rm mcp}(\sigma,\mu)$. In Paper 1, the gradient of $\overline{\rm mcp}(\sigma,\mu)$ is derived as the first equality in the following equation.
\begin{align*}
  d\overline{\rm mcp}(\sigma,\mu)= & \left(\sigma-\hat{\sigma}\right)^\top \left(\bar{J}(\sigma)+\left(\bar{r}-\left(I-\mathbf{1}\sigma^\top\right)\bar{F}(\sigma)\right)\circ I\right)\overline{d\theta}=d{\rm mcp}(\sigma,\mu)
\end{align*}

In the first equality, $(\sigma-\hat{\sigma})^\top\mathbf{1}d\bar{v}=0$ is used to eliminate $d\bar{v}$ in the derivation in Paper 1. From $d\overline{\rm mcp}(\sigma,\mu)$ to $d{\rm mcp}(\sigma,\mu)$, the elimination of $d\bar{v}_i$ remains as the following equation shows. Thus, we have the second equality in the above equation, indicating that $d{\rm mcp}(\sigma,\mu)$ also yields that expression.
\begin{align*}
  (\sigma-\mu/\bar{r})^\top [d\bar{v}_i\mathbf{1}]= & \sum_i(\sigma_i-\hat{\sigma}_i)^\top\mathbf{1}d\bar{v}_i=\sum_i(w_i\pi_i-w_i\hat{\pi}_i)^\top \mathbf{1}d\bar{v}_i=0
\end{align*}

Then, we have the following derivation from that expression. In the first line, we use ${\rm mcp}(\pi,\tau)={\rm mcp}(\sigma,\mu)$ in Proposition~\ref{thmequ_chara}. In the second line, we replace $\overline{d\theta}=(I-\mathbf{1}\sigma^\top)d\theta$ with $(I-\mathbf{1}\sigma^\top)\widetilde{d\theta}$, and use $\sigma^\top\widetilde{d\theta}=0$ in Lemma~\ref{thm_dtheta}~(i). The substitution is valid because $d\theta$ and $\widetilde{d\theta}$ lead to the same $d\pi$ in equation~\eqref{equ_differential} since $I-\mathbf{1}\pi_i^\top$ is idempotent, which in turn leads to the same $d{\rm mcp}(\pi,\tau)$. In the third line, we use $\bar{r}_i=r_i/w_i$ and $\hat{\sigma}_i=w_i\hat{\pi}_i$ in Proposition~\ref{thmequ_chara}~(iii), $w_i\bar{J}_{ij}(\sigma)=J_{ij}(\pi)$ in Theorem~\ref{thmequ_diff}, as well as $\sigma^\top\bar{F}(\sigma)=0$ and $w_i\bar{F}_i(\sigma)=F_i(\pi)-(\pi_i^\top F_i(\pi))\mathbf{1}$. In the fourth line, we use $r_i=F_i(\pi)+v_i\mathbf{1}$ and $\hat{\pi}_i^\top F_i(\pi)+v_i=\mathbf{1}^\top\tau_i$.
\begin{align*}
    & d{\rm mcp}(\pi,\tau)=d{\rm mcp}(\sigma,\mu)                                                                                                                                         \\
  = & \left(\sigma-\hat{\sigma}\right)^\top \left(\bar{J}(\sigma)+\left(\bar{r}-\left(I-\mathbf{1}\sigma^\top\right)\bar{F}(\sigma)\right)\circ I\right)\widetilde{d\theta}               \\
  = & \sum_{i,j}\left(\pi_i-\hat{\pi}_i\right)^\top\left(J_{ij}(\pi)+\delta_{ij}\left(r_i-\left(F_i(\pi)-(\pi_i^\top F_i(\pi))\mathbf{1}\right)\right)\circ I\right)\widetilde{d\theta}_j \\
  = & \sum_{i,j}\left(\pi_i-\hat{\pi}_i\right)^\top\left(J_{ij}(\pi)+\delta_{ij}\left(\mathbf{1}^\top\tau_i+(\pi_i-\hat{\pi}_i)^\top F_i(\pi)\right)I\right)\widetilde{d\theta}_j
\end{align*}

Ignoring the higher-order term $((\pi_i-\hat{\pi}_i)^\top F_i(\pi))(\pi_i-\hat{\pi}_i)$, we obtain the inexact gradient of ${\rm mcp}(\pi,\tau)$ in equation~\eqref{equ_grad}.
\begin{equation}
  \label{equ_grad}
  \nabla{\rm mcp}(\pi,\tau)=\left(J(\pi)+{\rm Diag}\left[(\mathbf{1}^\top\tau_i)I\right]\right)^\top\left[\pi_i-\hat{\pi}_i\right]
\end{equation}

\noindent\textbf{Regularized Newton equations.}
In Paper 1, the standard Newton corrector is used to prove the convergence rate of the overall predictor-corrector path-following algorithm, and two regularized Newton correctors are provided for practical use. The regularized Newton equations turn the coefficient matrix positive semidefinite and control its condition number using a regularization parameter, so that the iteration is stabilized and the update steps do not blow up when approaching singularities. Here, we derive the two corresponding regularized Newton correctors for ${\rm VI}(F,\Delta_n^m)$. We denote $J_G:=J(\pi)+{\rm Diag}[(\mathbf{1}^\top\tau_i)I]$ as a shorthand notation.

From the standard Newton equation~\eqref{equ_newton}, by multiplying both sides by $J_G$ and adding a regularization parameter $\delta_H$ to the coefficient, we obtain the regularized Newton equation~\eqref{equ_regu_kkt}.
\begin{equation}
  \label{equ_regu_kkt}
  \begin{aligned}
    \left(J_G^\top J_G+\delta_H I\right)d\theta= & -J_G^\top\left[\left(I-\mathbf{1}\pi_i^\top\right)(F_i(\pi)-\tau_i/\pi_i)\right] \\
    \widetilde{d\theta}_i=                       & \left(I-\mathbf{1}\pi_i^\top\right)d\theta_i                                     \\
  \end{aligned}
\end{equation}

In Paper 1, the right-hand side of the standard Newton equation~\eqref{equ_newton} can be transformed into $\sigma-\hat{\sigma}$ as the first line of the following equation shows, where $v_{gb}\mathbf{1}$ is changed to $[v_{gb,i}\mathbf{1}]$ to reflect the change from the original Brouwer function to the index-wise variant $(\hat{\sigma},\bar{r},\bar{v})=\bar{M}(\sigma,\mu)$. Given $(\sigma,\mu)=\Phi(\pi,\tau)$ and $\phi^{-1}_w(\sigma)={\rm norm}([\mathbf{1}^\top\mu_i])$, this transformation can be further transformed as follows.
\begin{align*}
  \frac{\sigma}{\bar{r}}\circ\left(\bar{J}(\sigma)+(\mathbf{1}^\top\mu)I\right)\left(\overline{d\theta}+[v_{gb,i}\mathbf{1}]\right)                                 & =-\left(\sigma-\hat{\sigma}\right)     \\
  \frac{w_i\pi_i}{r_i/w_i}\circ\sum_j\left((1/w_i)J_{ij}(\pi)+\delta_{ij}(1/w_i)(\mathbf{1}^\top\tau_i)I\right)\left(\overline{d\theta}_i+v_{gb,i}\mathbf{1}\right) & =-\left(w_i\pi_i-w_i\hat{\pi}_i\right) \\
  \frac{\pi_i}{r_i}\circ\sum_j\left(J_{ij}(\pi)+\delta_{ij}(\mathbf{1}^\top\tau_i)I\right)\left(\overline{d\theta}_i+v_{gb,i}\mathbf{1}\right)                      & =-\left(\pi_i-\hat{\pi}_i\right)
\end{align*}

Then, by multiplying both sides by $J_G$ and adding a regularization parameter $\delta_H$ to the coefficient, we obtain the regularized Newton equation~\eqref{equ_regu_barr}.
\begin{equation}
  \label{equ_regu_barr}
  \begin{aligned}
    \left(J_G^\top\circ\left[\frac{\pi_i}{r_i}\right]\circ J_G+\delta_H I\right)d\theta= & -J_G^\top\left[\pi_i-\hat{\pi}_i\right]      \\
    \widetilde{d\theta}_i=                                                               & \left(I-\mathbf{1}\pi_i^\top\right)d\theta_i \\
  \end{aligned}
\end{equation}

\subsection{Transferring the convergence results}

So far, we have completed the formality-only fixed-point bundle framework of ${\rm VI}(\bar{F},\Delta_{mn}^+)$ and the composite fixed-point bundle framework of ${\rm VI}(F,\Delta_n^m)$. Now, we are finally ready to fill the final gap from ${\rm VI}(\bar{F},\Delta_{mn}^+)$ to an actual smooth VI on a closed simplex, such that the theoretical guarantees in Paper 1 apply.

As in Paper 1, the path-following iteration consists of four parts: (i) reducing $\mu>0$ or $\tau>0$ toward $0$; (ii) avoiding singular points along the fibers by adding $\beta\sigma$ or $\beta{\rm norm}([\mathbf{1}^\top\tau_i])\odot\pi$ such that $\mathbf{1}^\top\mu+\beta$ avoids the eigenvalues of $-\bar{J}(\sigma)$, or $\sum_i\mathbf{1}^\top\tau_i+\beta$ avoids the eigenvalues of $-{\rm Diag}[(1/w_i)I]J(\pi)$; (iii) a predictor step along the tangent direction of $\bar{G}(\sigma,\mu)=0$ or $G(\pi,\tau)=0$; (iv) a corrector step toward $\bar{G}(\sigma,\mu)=0$ or $G(\pi,\tau)=0$. First, we establish the equivalence between the path-following algorithms for ${\rm VI}(\bar{F},\Delta_{mn}^+)$ and ${\rm VI}(F,\Delta_n^m)$ with standard Newton correctors.

\begin{theorem}
  \label{thmequ_update}
  Let the starting points be $(\sigma_{\rm init},v_{\rm init}\sigma_{\rm init})$ and $(\pi_{\rm init},v_{\rm init}w_{\rm init}\odot\pi_{\rm init})$ such that $\sigma_{\rm init}=\phi(\pi_{\rm init},w_{\rm init})$. Let the iteration consist of the following three kinds of updates.
  \begin{enumerate}
    \item $\mu'=(1-\eta)\mu$ and $\tau'=(1-\eta)\tau$.
    \item $\hat{\mu}=\mu+\beta\sigma$ and $\hat{\tau}=\tau+\beta{\rm norm}([\mathbf{1}^\top\tau_i])\odot\pi$.
    \item $\sigma'_i=\phi^{-1}_{w,i}(\sigma){\rm softmax}(\ln\phi^{-1}_{\pi,i}(\sigma)+\overline{d\theta}_i)$ and $\pi'_i={\rm softmax}(\ln\pi_i+\widetilde{d\theta}_i)$ with $\overline{d\theta}$ and $\widetilde{d\theta}$ subject to the differential equations or the Newton equations of $\bar{G}(\sigma,\mu)=0$ and $G(\pi,\tau)=0$.
  \end{enumerate}
  Then, $(\sigma,\mu)=\Phi(\pi,\tau)$ and $w_{\rm init}=\phi^{-1}_w(\sigma)={\rm norm}([\mathbf{1}^\top\mu_i])$ hold constantly during the iteration.
\end{theorem}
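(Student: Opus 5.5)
The proof goes by induction on the iteration. The invariant is
$$(\sigma,\mu)=\Phi(\pi,\tau),\qquad w_{\rm init}=\phi^{-1}_w(\sigma)={\rm norm}([\mathbf{1}^\top\mu_i]),$$
and we show that each of the three kinds of update preserves it.

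\textbf{Base case.} Since $\mathbf{1}^\top\pi_{{\rm init},i}=1$, the second starting point satisfies $\mathbf{1}^\top\tau_i=v_{\rm init}w_{{\rm init},i}$. Hence ${\rm norm}([\mathbf{1}^\top\tau_i])=w_{\rm init}$, and $\tau\in\mathbb{R}^{m\times n}_{\pm}$ when $v_{\rm init}\neq0$. By the definition~\eqref{equ_Phi_total}, $\Phi(\pi_{\rm init},\tau_{\rm init})=(\phi(\pi_{\rm init},w_{\rm init}),\tau)$. The first component is $\sigma_{\rm init}$ by assumption. The second is $\mu=v_{\rm init}\sigma_{\rm init}$, because $\sigma_{\rm init}=w_{\rm init}\odot\pi_{\rm init}$ up to reshape. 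Finally, $\phi^{-1}_w(\sigma_{\rm init})=w_{\rm init}={\rm norm}([\mathbf{1}^\top\mu_i])$.

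\textbf{Update (i).} The scaling $\mu'=(1-\eta)\mu$ and $\tau'=(1-\eta)\tau$ leaves $\sigma$ and $\pi$ unchanged. It also preserves $\mu'={\rm reshape}(\tau')$. Because ${\rm norm}$ is invariant under positive scaling (with $\eta<1$), ${\rm norm}([\mathbf{1}^\top\mu'_i])$ is unchanged, and the invariant holds.

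\textbf{Update (ii).} We use the induction hypothesis ${\rm norm}([\mathbf{1}^\top\tau_i])=w_{\rm init}$. Then $\beta\,{\rm norm}([\mathbf{1}^\top\tau_i])\odot\pi=\beta\,w_{\rm init}\odot\pi=\beta\,\phi(\pi,w_{\rm init})=\beta\sigma$ up to reshape, so $\hat\mu={\rm reshape}(\hat\tau)$. Next, $\mathbf{1}^\top\hat\tau_i=\mathbf{1}^\top\tau_i+\beta w_{{\rm init},i}=(\sum_j\mathbf{1}^\top\tau_j+\beta)\,w_{{\rm init},i}$. As long as $\sum_j\mathbf{1}^\top\tau_j+\beta\neq0$ (which we assume as the non-degeneracy condition, so that we stay in $\mathbb{R}^{m\times n}_{\pm}$), the normalization is still $w_{\rm init}$.

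\textbf{Update (iii).} This is the substantive step. By the invariant, the hypotheses of Proposition~\ref{thm_diff_newton} hold. Hence the solution $\overline{d\theta}$ of \eqref{equ_diff_simplex} or \eqref{equ_newton_simplex} satisfies $\pi_i^\top\overline{d\theta}_i=0$, and $\overline{d\theta}=\widetilde{d\theta}$. The proof of Proposition~\ref{thm_diff_newton} (multiplication by ${\rm Diag}[w_iI]$, together with $w_i\bar J_{ij}=J_{ij}$ from Theorem~\ref{thmequ_diff} and parts (ii) of Proposition~\ref{thmequ_chara}) shows that this same vector solves \eqref{equ_diff} or \eqref{equ_newton}, respectively.

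The main obstacle is uniqueness: we need the $\widetilde{d\theta}$ used for $G$ to be exactly this vector. I would handle it by assuming nonsingular coefficient matrices. Away from singular points (which step (ii) is designed to avoid), both systems have unique solutions in the subspaces $\{\sigma^\top x=0\}$ and $\{\pi_i^\top x_i=0\}$. Since $\overline{d\theta}$ lies in both subspaces (Lemma~\ref{thm_dtheta}), the two solutions coincide.

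With $\overline{d\theta}=\widetilde{d\theta}$ and $\phi^{-1}(\sigma)=(\pi,w_{\rm init})$, the two updates read
$$\sigma'_i=w_{{\rm init},i}\,{\rm softmax}(\ln\pi_i+\widetilde{d\theta}_i)=w_{{\rm init},i}\,\pi'_i,$$
that is, $\sigma'=\phi(\pi',w_{\rm init})$. In particular, $\sigma'$ lies in $\Delta^+_{mn}$, $\phi^{-1}_w(\sigma')=w_{\rm init}$, and $\sigma'$ sums to one. Step (iii) leaves $\mu$ and $\tau$ fixed, and a $\tau$-update in a predictor would be applied identically to both, preserving reshape and normalization as in (i)--(ii). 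Therefore ${\rm norm}([\mathbf{1}^\top\mu_i])=w_{\rm init}$ and $\Phi(\pi',\tau)=(\phi(\pi',w_{\rm init}),\mu)=(\sigma',\mu)$.

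This completes the induction, and both claimed identities hold throughout the iteration.
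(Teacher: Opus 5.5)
Your proposal is correct and follows the paper's proof essentially step for step. You verify the invariant at the starting points, check that updates (i) and (ii) preserve both the reshape relation and the normalization, and use Proposition~\ref{thm_diff_newton} ($\overline{d\theta}=\widetilde{d\theta}$, hence $dw=0$) to obtain $\sigma'=\phi(\pi',w_{\rm init})$ in update (iii). Your additional care fills in points the paper's proof leaves implicit: the nondegeneracy conditions $v_{\rm init}\neq0$, $\eta<1$, and $\sum_j\mathbf{1}^\top\tau_j+\beta\neq0$, and the nonsingularity assumption ensuring that the $\widetilde{d\theta}$ solving the $G$-system is the same vector as the $\overline{d\theta}$ solving the $\bar G$-system.
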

\begin{proof}
  The first equality $(\sigma,\mu)=\Phi(\pi,\tau)$ holds if and only if $\sigma=\phi(\pi,w)$, $\mu={\rm reshape}(\tau)$, and $w={\rm norm}([\mathbf{1}^\top\mu_i])$. Given the first equality, the second equality holds if and only if $w=w_{\rm init}$. Thus, we use these four equalities to verify that the two conditions are preserved during the iteration.

  At the starting points $(\sigma_{\rm init},v_{\rm init}\sigma_{\rm init})$ and $(\pi_{\rm init},v_{\rm init}w_{\rm init}\odot\pi_{\rm init})$, from $\sigma_{\rm init}=\phi(\pi_{\rm init},w_{\rm init})$, we can verify $\sigma=\phi(\pi,w)$, $\mu={\rm reshape}(\tau)$, $\phi^{-1}_w(\sigma)=w_{\rm init}$, and ${\rm norm}([\mathbf{1}^\top\mu_i])=w_{\rm init}$. Thus, the conditions $(\sigma,\mu)=\Phi(\pi,\tau)$ and $w_{\rm init}=\phi^{-1}_w(\sigma)={\rm norm}([\mathbf{1}^\top\mu_i])$ hold at the starting points. Next, it remains to prove that given that the two conditions hold, the updates preserve them.

  Update (i): Since $\sigma$ and $\pi$ remain unchanged, we have $\sigma=\phi(\pi,w)$ and $w=w_{\rm init}$.For $\mu'$, we have ${\rm norm}([\mathbf{1}^\top\mu'_i])={\rm norm}([\mathbf{1}^\top\mu_i])$, so $w={\rm norm}([\mathbf{1}^\top\mu'_i])$ given $w={\rm norm}([\mathbf{1}^\top\mu_i])$. For $\mu'$ and $\tau'$, we have $\mu'={\rm reshape}(\tau')$ given $\mu={\rm reshape}(\tau)$. Thus, the two conditions are preserved.

  Update (ii): Since $\sigma$ and $\pi$ remain unchanged, we have $\sigma=\phi(\pi,w)$ and $w=w_{\rm init}$. For $\hat{\mu}$, we have ${\rm norm}([\mathbf{1}^\top\hat{\mu}_i])={\rm norm}([\mathbf{1}^\top\mu_i+\beta w_{{\rm init},i}])=w_{\rm init}$ given $\mathbf{1}^\top\sigma_i=w_i$ and $w={\rm norm}([\mathbf{1}^\top\mu_i])$. For $\hat{\mu}$ and $\hat{\tau}$, we have $\sigma={\rm reshape}({\rm norm}([\mathbf{1}^\top\tau_i])\odot\pi)$ given $\mu={\rm reshape}(\tau)$ and ${\rm norm}([\mathbf{1}^\top\mu_i])=w$, then we have $\hat{\mu}={\rm reshape}(\hat{\tau})$. Thus, the two conditions are preserved.

  Update (iii): The update $\sigma'_i=\phi^{-1}_{w,i}(\sigma){\rm softmax}(\ln\phi^{-1}_{\pi,i}(\sigma)+\overline{d\theta}_i)$ preserves $\phi^{-1}_w(\sigma)=w$ such that $\phi^{-1}_w(\sigma')=w$ as well, and then $w=w_{\rm init}$ is also preserved. Since $\mu$ and $\tau$ remain unchanged, we have $\mu={\rm reshape}(\tau)$ and $w={\rm norm}([\mathbf{1}^\top\mu_i])$. Proposition~\ref{thm_diff_newton} shows that $\overline{d\theta}=\widetilde{d\theta}$ for either the differential equations or the Newton equations, then ${\rm softmax}(\ln\phi^{-1}_{\pi,i}(\sigma)+\overline{d\theta}_i)=\pi'_i$ given $\sigma=\phi(\pi,w)$, which indicates $\sigma'=\phi(\pi',w)$. Thus, the two conditions are preserved.

  Therefore, the conditions hold at the starting points, and the updates preserve them, so they hold constantly during the iteration.
\end{proof}

In Theorem~\ref{thmequ_update}~(iii), $\sigma$ is updated by $\sigma'_i=\phi^{-1}_{w,i}(\sigma){\rm softmax}(\ln\phi^{-1}_{\pi,i}(\sigma)+\overline{d\theta}_i)$, instead of the Paper 1 version $\sigma'={\rm softmax}(\ln\sigma+\overline{d\theta})$. In fact, the former is a second-order correction of the latter to enforce $w$ as an exact constant on top of the $dw=0$ implied by Proposition~\ref{thm_diff_newton}. The difference between the two updates for $\sigma$ is derived as follows, where $d\pi_i=\pi_i\circ\widetilde{d\theta}_i$ in equation~\eqref{equ_differential} and $\overline{d\theta}=\widetilde{d\theta}$ in Proposition~\ref{thm_diff_newton} are used in the second line.
\begin{align*}
    & {\rm softmax}_i(\ln\sigma+\overline{d\theta})-\phi^{-1}_{w,i}(\sigma){\rm softmax}(\ln\phi^{-1}_{\pi,i}(\sigma)+\overline{d\theta}_i)                                                                                    \\
  = & \sigma_i+\sigma_i\circ\overline{d\theta}_i+o(\overline{d\theta}_i)-\phi^{-1}_{w,i}(\sigma)(\phi^{-1}_{\pi,i}(\sigma)+\phi^{-1}_{\pi,i}(\sigma)\circ\overline{d\theta}_i+o(\overline{d\theta}_i))=o(\overline{d\theta}_i)
\end{align*}
A second-order correction $o(\overline{d\theta}_i)$ at each update does not change the convergence rate of the original predictor-corrector framework \cite{allgower2003introduction}.

Theorem~\ref{thmequ_update} shows that the predictor-corrector path-following algorithms on $\bar{E}\to\Delta_{mn}^+$ and $E\to\Delta_m^\circ\times\Delta_n^m\to\Delta_n^m$ generate sequences of $(\sigma,\mu)$ and $(\pi,\tau)$ that correspond exactly via the homeomorphism $\Phi$. Furthermore, $w$ remains constant during the entire iteration. This can be used to fill the final gap by constructing a smooth ${\rm VI}(\tilde{F},\Delta_{mn})$ on the closed simplex such that all results in Paper 1 apply. Specifically, we want a map $\tilde{F}$ that is real-analytic on the closed simplex $\Delta_{mn}$, such that $\tilde{F}$ and $\bar{F}$ have the same value on the subset $W(w_{\rm init})=\{\sigma\in\Delta_{mn}|\phi^{-1}_{w,i}(\sigma)=w_{\rm init}\}$, which is given by equation~\eqref{equ_smooth_vi}.
\begin{equation}
  \label{equ_smooth_vi}
  \begin{aligned}
    \tilde{F}_i(\sigma)      = & \frac{1}{w_{{\rm init},i}}\left(F_i(\tilde{\pi})-(\tilde{\pi}_i^\top F_i(\tilde{\pi}))\mathbf{1}\right)                                         \\
    {\rm s.t.~}                & \tilde{\pi}_i(\sigma)=\left(1-\frac{\phi^{-1}_{w,i}(\sigma)}{\tilde{w}_i(\sigma)}\right)\pi_{{\rm init},i}+\frac{\sigma_i}{\tilde{w}_i(\sigma)} \\
                               & \tilde{w}(\sigma)        =\phi^{-1}_w(\sigma)+(w_{\rm init}-\phi^{-1}_w(\sigma))^2                                                              \\
  \end{aligned}
\end{equation}

For every $\sigma\in\Delta_{mn}$, we have $\tilde{\pi}(\sigma)\in\Delta_n^m$, because there is $\mathbf{1}^\top\tilde{\pi}_i(\sigma)=1$ from $\mathbf{1}^\top\sigma_i=\phi^{-1}_{w,i}(\sigma)$ and $\mathbf{1}^\top\pi_{{\rm init},i}=1$, and $\tilde{\pi}_i(\sigma)\geq0$ from $\tilde{w}(\sigma)\geq\phi^{-1}_w(\sigma)$. Considering also that $F$ is real-analytic on $\Delta_n^m$ and the denominators $w_{\rm init}>0$ and $\tilde{w}(\sigma)>0$, we obtain that $\tilde{F}(\sigma)$ is real-analytic on the simplex $\Delta_{mn}$. Furthermore, when $\phi^{-1}_w(\sigma)=w_{\rm init}$, we have $\tilde{w}(\sigma)=w_{\rm init}$ and $\tilde{\pi}(\sigma)=\phi^{-1}_\pi(\sigma)$, then $\tilde{F}(\sigma)=\bar{F}(\sigma)$. Thus, $\tilde{F}$ and $\bar{F}$ have the same value on the subset $W(w_{\rm init})$.

We also need to compare the derivatives of $\tilde{F}$ and $\bar{F}$ in addition to their function values.
\begin{proposition}
  \label{thm_final}
  On $W(w_{\rm init})$, the differential equation~\eqref{equ_diff_simplex} or Newton equation~\eqref{equ_newton_simplex} remains the same when the operator $\bar{F}$ is replaced by $\tilde{F}$.
\end{proposition}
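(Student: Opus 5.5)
The plan is to inspect which quantities built from the operator actually enter equations~\eqref{equ_diff_simplex} and~\eqref{equ_newton_simplex}. The coefficient matrix $\bar{J}(\sigma)+(\mathbf{1}^\top\mu)I$ involves $\bar{F}(\sigma)$ and its Jacobian $\partial\bar{F}/\partial\sigma$, always contracted on the right by $(I-\mathbf{1}\sigma^\top)$ and multiplied by $\overline{d\theta}$. The Newton right-hand side involves $\bar{G}(\sigma,\mu)/\sigma$, which depends only on the value $\bar{F}(\sigma)$. Equality of values on $W(w_{\rm init})$ was already shown after equation~\eqref{equ_smooth_vi}. So I would reduce the claim to one statement: the derivatives of $\tilde{F}$ and $\bar{F}$ agree along every direction that is tangent to $W(w_{\rm init})$ and can occur in these equations.

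\textbf{Step 1 (restricting the directions).} Through the softmax parametrization, the relevant direction is $d\sigma=\sigma\circ\overline{d\theta}$. I would argue that only $\overline{d\theta}$ with $\pi_i^\top\overline{d\theta}_i=0$ for every $i$ matter. Proposition~\ref{thm_diff_newton} gives this for the $\bar{F}$ system: multiplying by ${\rm Diag}[w_iI]$ and using $\pi_i^\top J_i(\pi)=0$ forces it. By Lemma~\ref{thm_dtheta}~(ii), such directions satisfy $dw=0$, so $d\sigma$ is tangent to $W(w_{\rm init})$.

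\textbf{Step 2 (tangential derivatives agree).} If $\tilde{F}=\bar{F}$ on the smooth submanifold $W(w_{\rm init})$ (relative to its interior part in $\Delta_{mn}^+$), then the two directional derivatives agree along every tangent direction. One can also check this directly. Along $dw=0$ we have $d\tilde{w}=d\phi^{-1}_w+2(w_{\rm init}-w)\,d(\cdot)=0$ at $w=w_{\rm init}$, and $d\tilde{\pi}_i=d\sigma_i/w_{{\rm init},i}=d\pi_i$. Hence $d\tilde{F}_i=\frac{1}{w_{{\rm init},i}}d\big(F_i(\pi)-(\pi_i^\top F_i(\pi))\mathbf{1}\big)=d\bar{F}_i$. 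It follows that $\frac{\partial\tilde{F}}{\partial\sigma}(\sigma\circ\overline{d\theta})=\frac{\partial\bar{F}}{\partial\sigma}(\sigma\circ\overline{d\theta})$ for all admissible $\overline{d\theta}$.

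\textbf{Step 3 (main obstacle: consistency of the $\tilde{F}$ system).} Step 1 used the structure of $\bar{F}$, so it does not yet show that solutions of the $\tilde{F}$ system satisfy the same restriction. The $\tilde{F}$ Jacobian may differ from the $\bar{F}$ Jacobian in the normal directions ($dw\neq0$), where the quadratic term in $\tilde{w}$ and the $\pi_{\rm init}$ correction are active. To handle this, I would first try to show that $\tilde{\mu}(\sigma)$ computed with $\tilde{F}$ still satisfies $\mathbf{1}^\top\tilde{\mu}_i=0$ blockwise on $W$. This would give the analogue of $\pi_i^\top\bar{J}_i=0$ after scaling, namely $\pi_i^\top\tilde{J}_i=0$ row-block-wise on $W$. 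The same argument as Proposition~\ref{thm_diff_newton} then forces $\pi_i^\top\overline{d\theta}_i=0$. The hard part is the columns: in $\tilde{J}$, normal directions can enter through $\partial\tilde{F}/\partial\sigma$ applied to non-tangent $\overline{d\theta}$. I would argue as follows. The block-row projection already kills the normal component on the left. Therefore the linear system restricted to the invariant subspace $\{\pi_i^\top\overline{d\theta}_i=0\}$ is identical for the two operators by Step 2, and the solution is unique whenever the coefficient matrix is nonsingular. If this is correct, the two equations coincide on $W(w_{\rm init})$.
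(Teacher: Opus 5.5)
Your Steps 1 and 2 are exactly the paper's proof. The paper takes $dw=0$ from Proposition~\ref{thm_diff_newton} and observes that $d\tilde F=d\bar F$ along such directions because the two operators agree on $W(w_{\rm init})$. It then concludes, since the equations see the operator only through $\bar F(\sigma)$ and $(\partial\bar F/\partial\sigma)\circ\sigma\circ\overline{d\theta}$.

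Your Step 3 addresses a real point the paper skips: Step 1 uses the structure of $\bar F$, not of $\tilde F$. However, its key inference fails as stated. Since $\sigma_i^\top\tilde J_i(\sigma)\overline{d\theta}=\mathbf{1}^\top d\tilde\mu_i$, the row-block identity $\pi_i^\top\tilde J_i\overline{d\theta}=0$ for all $\overline{d\theta}$ with $\sigma^\top\overline{d\theta}=0$ requires the differential of $\mathbf{1}^\top\tilde\mu_i$ to vanish in \emph{every} direction. This includes the normal ones, spanning $N=\{[c_i\mathbf{1}]:\sum_iw_ic_i=0\}$, along which $dw\neq0$. Knowing $\mathbf{1}^\top\tilde\mu_i=0$ on $W(w_{\rm init})$ only controls tangential columns, and that much already follows from Step 2: the two coefficient matrices coincide on $T=\{\pi_i^\top\overline{d\theta}_i=0\ \forall i\}$, and the $\bar F$ one maps $T$ into $T$. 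So in the splitting $T\oplus N$ the $\tilde F$ matrix is block upper triangular with the correct $TT$ block, but its $NN$ block is not determined by what you wrote. Hence ``the block-row projection kills the normal component'' is unjustified, and as far as your argument goes, the $\tilde F$ system could be singular where the $\bar F$ system is not.

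The missing ingredient is second-order, and it comes from the squared term in $\tilde w$. Write $w_i=\phi^{-1}_{w,i}(\sigma)$. From equation~\eqref{equ_smooth_vi} we have $\sigma_i=\tilde w_i\tilde\pi_i-(\tilde w_i-w_i)\pi_{{\rm init},i}$ and $\tilde\pi_i^\top\tilde F_i(\sigma)=0$, hence
\begin{equation*}
\sigma_i^\top\tilde F_i(\sigma)=-(w_{{\rm init},i}-w_i)^2\,\pi_{{\rm init},i}^\top\tilde F_i(\sigma).
\end{equation*}
Therefore $\mathbf{1}^\top\tilde\mu_i=\sigma_i^\top\tilde F_i-w_i\sum_j\sigma_j^\top\tilde F_j$ vanishes to second order on $W(w_{\rm init})$, and its full differential is zero there. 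This gives $\pi_i^\top\tilde J_i\overline{d\theta}=0$ on the whole of $\{\sigma^\top\overline{d\theta}=0\}$. The induced map on the $N$ part then becomes multiplication by $\mathbf{1}^\top\mu$ for both operators, and your triangular argument shows that the two systems have the same solution and the same singular set.

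Two further remarks:
\begin{itemize}
\item If you simply assume the $\tilde F$ coefficient matrix is nonsingular, Step 3 is unnecessary. The $\bar F$ solution lies in $T$ by Step 1, and the matrices agree on $T$ by Step 2, so it solves the $\tilde F$ system and is therefore its unique solution.
\item What the repaired Step 3 adds beyond the paper is that singularity avoidance based on $J(\pi)$ also avoids the singular points of ${\rm VI}(\tilde F,\Delta_{mn})$.
\end{itemize}
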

\begin{proof}
  Proposition~\ref{thm_diff_newton} shows that $dw=0$ in differential equation~\eqref{equ_diff_simplex} and Newton equation~\eqref{equ_newton_simplex}. For the differentials $d\tilde{F}(\sigma)$ and $d\bar{F}(\sigma)$, if $\sigma\in W(w_{\rm init})$ and $dw=0$, we have $d\tilde{F}(\sigma)=d\bar{F}(\sigma)$ since $\tilde{F}(\sigma)$ and $\bar{F}(\sigma)$ have the same value on $W(w_{\rm init})$. Then, expanding $d\tilde{F}(\sigma)$ and $d\bar{F}(\sigma)$, we have
  $$\frac{\partial\tilde{F}(\sigma)}{\partial\sigma}\circ\sigma\circ\overline{d\theta}=\frac{\partial\bar{F}(\sigma)}{\partial\sigma}\circ\sigma\circ\overline{d\theta}.$$
  Note that the differential equations and Newton equations depend on the operator $\bar{F}$ only through its value $\bar{F}(\sigma)$ and its differential $(\partial\tilde{F}(\sigma)/\partial\sigma)\circ\sigma\circ\overline{d\theta}$. Thus, we obtain that on $W(w_{\rm init})$, the differential equations or Newton equations are the same for operators $\bar{F}$ and $\tilde{F}$.
\end{proof}

Theorem~\ref{thmequ_update} shows that the update of the predictor-corrector path-following algorithm keeps $\sigma$ constantly on $W(w_{\rm init})$. Theorem~\ref{thm_final} shows that on $W(w_{\rm init})$, the differential equations or Newton equations are the same for operators $\bar{F}$ and $\tilde{F}$. Therefore, the predictor-corrector path-following algorithms applied to ${\rm VI}(\bar{F},\Delta_{mn}^+)$ and ${\rm VI}(\tilde{F},\Delta_{mn})$ produce exactly the same iteration sequence.

The problem ${\rm VI}(\tilde{F},\Delta_{mn})$ is a real-analytic VI on the closed simplex such that all the results in Paper 1 apply to it. Then, the theoretical guarantees of the path-following on the fixed-point bundle transfer from ${\rm VI}(\tilde{F},\Delta_{mn})$ to ${\rm VI}(\bar{F},\Delta_{mn}^+)$, and then to ${\rm VI}(F,\Delta_n^m)$.

\subsection{The complete path-following algorithm}

In the last subsection, we adopt a singularity avoidance mechanism for ${\rm VI}(F,\Delta_n^m)$ corresponding to the one for ${\rm VI}(\bar{F},\Delta_{mn}^+)$ in Paper 1, which is setting $\hat{\tau}=\tau+\beta w\odot\pi$ with $w={\rm norm}([\mathbf{1}^\top\tau_i])$ such that $\sum_i\mathbf{1}^\top\tau_i+\beta$ avoids the eigenvalues of $-{\rm Diag}[(1/w_i)I]J(\pi)$. Now that we have shown that $w$ can remain unaffected during the iteration process, we can add a mechanism to arbitrarily modify $w$ in singularity avoidance. Specifically, we reallocate $\mathbf{1}^\top\tau_i$ to change $w={\rm norm}([\mathbf{1}^\top\tau_i])$.

Similar to Paper 1, $\tau$ must satisfy $\tau>0$ to ensure path-following leading to solutions of ${\rm VI}(F,\Delta_n^m)$. Since $\check{\tau}(\pi)$ is a section of the fixed-point bundle, for any $(\pi,\tau)\in E$, we have $\tau_i=\check{\tau}_i(\pi)+k_i\pi_i$. Considering $\min_a\check{\tau}_{i,a}(\pi)=0$, we have $\tau>0$ if and only if $\mathbf{1}^\top\tau_i>\mathbf{1}^\top\check{\tau}_i(\pi)$. Thus, we have $\hat{\tau}>0$ and $\sum_i\mathbf{1}^\top\hat{\tau}_i=\sum_i\mathbf{1}^\top\tau_i+\beta$ if and only if there exists $\tilde{w}_i\in\Delta_m^\circ$ such that $\hat{\tau}$ satisfies
\begin{equation}
  \label{equ_singu_avoid}
  \mathbf{1}^\top\hat{\tau}_i=\tilde{w}_i\left(\sum_i\mathbf{1}^\top\tau_i+\beta-\sum_i\mathbf{1}^\top\check{\tau}_i(\pi)\right)+\mathbf{1}^\top\check{\tau}_i(\pi).
\end{equation}

In singularity avoidance, we select $\tilde{w}\in\Delta_m^\circ$ and $\beta$ such that $\sum_i\mathbf{1}^\top\tau_i+\beta$ avoids the eigenvalues of $-{\rm Diag}[(1/w_i)I]J(\pi)$ with $w_i=\mathbf{1}^\top\hat{\tau}_i/(\sum_j\mathbf{1}^\top\hat{\tau}_j)$ given by equation~\eqref{equ_singu_avoid}. Then, the $\hat{\tau}$ after singularity avoidance is $\hat{\tau}=\tilde{\tau}(\pi)+(\sum_i\mathbf{1}^\top\tau_i+\beta)w\odot\pi$. Our experimental results are based on this singularity avoidance mechanism with $\tilde{w}\in\Delta_m^\circ$ randomly selected, which has better performance than keeping $w$ fixed.

The path-following algorithm for ${\rm VI}(F,\Delta_n^m)$ is given in Algorithm~\ref{algo}. Apart from its singularity avoidance step, it corresponds to Algorithm 1 in Paper 1 via the equivalence established in this paper.

\begin{algorithm}
  \caption{Path-following on the composite fixed-point bundle}
  \label{algo}
  \begin{algorithmic}[1]
    \Require A smooth map $F:\Delta_n^m\to\mathbb{R}^{m\times n}$ and its derivative $[\partial F_i/\partial\pi_j]$, a designated starting point $\pi_{\rm init}\in\Delta_n^m$, and a desired precision $\epsilon>0$
    \State Set $(\pi_0,\tau_0)=(\pi_{\rm init},v_{\rm init}w_{\rm init}\odot\pi_{\rm init})$ for a $w_{\rm init}\in\Delta_m^\circ$ and a sufficiently large $v_{\rm init}$
    \Repeat
    \Repeat
    \State Compute $(\hat{\pi}_k,r_k,v_k)=M(\pi_k,\tau_t)$ by solving bisection problem \eqref{equ_bisection}
    \State Construct the matrix $J(\pi_k)$ in equation~\eqref{equ_j}
    \State Solve regularized Newton equation~\eqref{equ_regu_kkt} or~\eqref{equ_regu_barr} for $\widetilde{d\theta}_k$ with $\delta_H=\lVert G(\pi_k,\tau_t)\rVert/(mn)$
    \State Update $\pi_{k+1,i}={\rm softmax}(\ln\pi_{k,i}+\widetilde{d\theta}_{k,i})$
    \Until{$\lVert G(\pi_k,\tau_t)\rVert\leq\epsilon/(mn)$}
    \State Set $\hat{\tau}=\tilde{\tau}(\pi)+(\sum_i\mathbf{1}^\top\tau_i+\beta)w\odot\pi$ with $w_i=\mathbf{1}^\top\hat{\tau}_i/(\sum_j\mathbf{1}^\top\hat{\tau}_j)$ given by equation~\eqref{equ_singu_avoid}, such that $\sum_i\mathbf{1}^\top\tau_{t,i}+\beta_t$ avoids the nonzero eigenvalues of $-{\rm Diag}[(1/w_i)I]J(\pi_k)$
    \State Update and truncate $\tau_{t+1}=((1-\eta_t)\hat{\tau}_t).{\rm clip}(\min=\epsilon/(mn))$ with a sufficiently small $\eta_t$
    \State Solve differential equation~\eqref{equ_diff} for $\widetilde{d\theta}_t$ with $d\tau_t=\tau_{t+1}-\hat{\tau}_t$
    \State Set $\pi_{0,i}={\rm softmax}(\ln\pi_{k,i}+\widetilde{d\theta}_{t,i})$
    \Until{$\textstyle\sum_i\mathbf{1}^\top\check{\tau}_i(\pi_k)\leq\epsilon$}\\
    \Return $\pi_k$ as an approximate solution of ${\rm VI}(F,\Delta_n^m)$
  \end{algorithmic}
\end{algorithm}

The corrector can be either regularized Newton equation~\eqref{equ_regu_kkt} or regularized Newton equation~\eqref{equ_regu_barr}. Similar to Paper 1, the former mixes the standard Newton equation~\eqref{equ_newton} and the inexact gradient $J_G^\top[G_i(\pi,\tau)/\pi_i]$ of $\lVert[G_i(\pi,\tau)/\pi_i]\rVert$, while the latter mixes the standard Newton equation~\eqref{equ_newton} and the inexact gradient $J_G^\top[\pi_i-\hat{\pi}_i]$ of ${\rm mcp}(\pi,\tau)$.

A difference from Paper 1 is that the performance of regularized Newton corrector~\eqref{equ_regu_kkt} is not ideal for solving ${\rm VI}(F,\Delta_n^m)$. This is because the $\tau/\pi$ term in corrector~\eqref{equ_regu_kkt} frequently causes overflow in problems with large dimension, whereas the similar term $\tau/r$ in corrector~\eqref{equ_regu_barr} does not, because it is subject to the bisection problem~\eqref{equ_bisection}. The overflow could cause $G(\pi,\tau)$ to fail to be corrected to $0$, triggering singularity avoidance that practically lets the algorithm try to approach the solution in another direction where $\pi$ has more even components. This leads to much higher iteration cost for ${\rm VI}(F,\Delta_n^m)$ on simplex product than for VIs on simplex in Paper 1, because ${\rm VI}(F,\Delta_n^m)$ has much more nonlinearity, which can be seen from the rapid increase in the number of singularity encounters as $m$ increases. Thus, we only test the algorithm with corrector~\eqref{equ_regu_barr} in experiments.

\section{Experiments}

We test the algorithm on randomly generated instances of ${\rm VI}(F,\Delta_n^m)$ with $F$ represented by two different models: a neural network and a payoff tensor. The neural network representation models VI problems with a general real-analytic operator. In our experiments, we use a neural network with architecture $[mn,50,mn]$ and tanh activation to represent $F$, with parameters generated from a uniform distribution.

The payoff tensor representation models finite normal-form games. For $m$-player $n$-action finite normal-form games, the payoff function is a multilinear function given by an $m$-th order payoff tensor of shape $(n,\dots,n)$, which consists of $n^m$ elements, growing exponentially with $m$. We use the classical low-rank CP (canonical polyadic) representation as a practical alternative to the full tensor representation \cite{kolda2009tensor}. Specifically, $F$ is represented by $F_i(\pi)=\nabla_{\pi_i}f_i(\pi)$, where
\begin{equation}
  f_i(\pi)=\sum_r\prod_{j=1}^m(U_{r,i,j}\pi_j)
\end{equation}
is the low-rank CP payoff function. Each $U_i$ is a third-order tensor of shape $(R,m,n)$, reducing the total storage requirement from $n^m$ to $Rmn$. In our experiments, each element $u\in U$ is generated from a normal distribution with mean $1$ and variance $1/m$, so that $\prod_{j=1}^mu_j$ has mean $1$ and variance near $e-1$, preventing overflow or underflow for very large $m$. We use $R=50$ in the experiments.

\begin{table}[htbp]
  \centering
  \caption{Statistics (mean / median) solving ${\rm VI}(F,\Delta_n^m)$}
  \label{table}
  \begin{tabular}{lcccc}
    \toprule
    \multirow{2}{*}{}
            & \multicolumn{2}{c}{Neural network F}
            & \multicolumn{2}{c}{Payoff F}                                                                                                               \\
    \cmidrule(lr){2-3} \cmidrule(lr){4-5}
    $m,n$   & Iteration count                      & \makecell{Singularity\\avoidance times} & Iteration count & \makecell{Singularity\\avoidance times} \\
    \midrule
    $8,2$   & 375 / 353                            & 0.9 / 0.0                               & 370 / 359       & 0.3 / 0.0                               \\
    $8,4$   & 500 / 409                            & 5.0 / 0.0                               & 451 / 440       & 2.1 / 0.0                               \\
    $8,8$   & 658 / 553                            & 10.8 / 7.0                              & 507 / 436       & 3.8 / 0.0                               \\
    $8,16$  & 839 / 704                            & 18.4 / 13.5                             & 579 / 511       & 6.3 / 1.0                               \\
    $8,32$  & 1191 / 1006                          & 34.8 / 28.0                             & 735 / 641       & 12.3 / 9.0                              \\
    \midrule
    $2,8$   & 381 / 363                            & 1.0 / 0.0                               & 422 / 413       & 0.3 / 0.0                               \\
    $4,8$   & 442 / 377                            & 3.5 / 0.0                               & 446 / 415       & 1.4 / 0.0                               \\
    $8,8$   & 658 / 553                            & 10.8 / 7.0                              & 507 / 436       & 3.8 / 0.0                               \\
    $16,8$  & 1666 / 1301                          & 46.3 / 33.5                             & 634 / 552       & 8.9 / 5.5                               \\
    $32,8$  & 7481 / 4902                          & 218.1 / 147.5                           & 1124 / 937      & 27.7 / 21.0                             \\
    \midrule
    $2,128$ & 480 / 401                            & 5.3 / 0.0                               & 492 / 455       & 1.7 / 0.0                               \\
    $4,64$  & 701 / 626                            & 15.5 / 11.5                             & 575 / 482       & 5.4 / 0.0                               \\
    $8,32$  & 1191 / 1006                          & 34.8 / 28.0                             & 735 / 641       & 12.3 / 9.0                              \\
    $16,16$ & 3178 / 2013                          & 101.2 / 58.0                            & 964 / 841       & 21.7 / 17.0                             \\
    $32,8$  & 7481 / 4902                          & 218.1 / 147.5                           & 1124 / 937      & 27.7 / 21.0                             \\
    $64,4$  & 9676 / 5811                          & 274.5 / 171.0                           & 1387 / 1064     & 37.7 / 26.0                             \\
    $128,2$ & 5217 / 3934                          & 152.8 / 117.0                           & 839 / 712       & 17.8 / 12.5                             \\
    \bottomrule
  \end{tabular}
\end{table}

The starting point $\pi_{\rm init}$ is also randomly generated, and the required precision is $\epsilon=10^{-5}$. We solve the problems until ${\rm gap}_i(\pi)<\epsilon$ rather than ${\rm gap}(\pi)=\sum_i{\rm gap}_i(\pi)<\epsilon$. For dimensions, we fix the player number $m=8$ and vary the action number $n$ from $2$ to $32$, and fix the action number $n=8$ and vary the player number $m$ from $2$ to $32$. Since the algorithm operates in dimension $mn$, we also fix $mn=256$ and test $(m,n)$ ranging from $(2,128)$ to $(128,2)$. For each data point, we run 200 instances, totaling 5600 instances. We test the algorithm only with the regularized Newton corrector~\eqref{equ_regu_barr}. Table~\ref{table} reports the mean and median iteration counts and singularity avoidance times.

The algorithm succeeds on all 5600 instances. When $m$ is fixed, the iteration count and singularity avoidance count increase mildly as $n$ increases; whereas when $n$ is fixed, they increase dramatically as $m$ increases. When $mn$ is fixed, the peak counts appear in the middle rather than at the endpoints $(2,128)$ or $(128,2)$, indicating that both $m$ and $n$ affect problem difficulty for a fixed total dimension $mn$. In addition, solving ${\rm VI}(F,\Delta_n^m)$ with payoff $F$ is significantly easier than with neural network $F$, because neural networks produce greater nonlinearity and a more twisted singular manifold, causing much more singularity avoidance.

\section{Conclusion}

This paper generalizes the fixed-point bundle framework in Paper 1 for VIs with simplex domain to VIs with product-of-simplex domain. The fixed-point bundle $E\to\Delta_n^m\times\Delta_m^\circ\to\Delta_n^m$ of ${\rm VI}(F,\Delta_n^m)$ has a composite fiber bundle structure.

We construct a ${\rm VI}(\bar{F},\Delta_{mn}^+)$ with simplex domain from ${\rm VI}(F,\Delta_n^m)$ that preserves the solutions. The central innovation of this paper is to establish the equivalence between their fixed-point bundle frameworks, based on $\bar{E}\to\Delta_{mn}^+$ and $E\to\Delta_n^m\times\Delta_m^\circ$, via the bundle isomorphism $(\Phi:E\to\bar{E},\phi:\Delta_n^m\times\Delta_m^\circ\to\Delta_{mn}^+)$.

The path-following algorithms on the fixed-point bundles keep $w$ constant. The map $\bar{F}$ restricted to the slice $w=w_{\rm init}$ can be extended to a real-analytic map $\tilde{F}$ on the closed simplex $\Delta_{mn}$. Then, all the results in Paper 1 apply to ${\rm VI}(\tilde{F},\Delta_{mn})$. Thus, the path-following algorithm for ${\rm VI}(F,\Delta_n^m)$ has the same convergence guarantee as in Paper 1. Numerical experiments on 5600 randomly generated problems with dimensions ranging from 2-player 128-action to 128-player 2-action confirm universal success.

The singularity avoidance in the framework involves arbitrarily selecting a new $w$, and we currently adopt the most basic random selection method. Although simple and effective in experiments, computing $w$ based on the state of the path-following process could further improve practical performance.

\bibliographystyle{unsrt}

\begin{thebibliography}{10}

\bibitem{nash1951non}
John Nash.
\newblock Non-cooperative games.
\newblock {\em Annals of mathematics}, 54(2):286--295, 1951.

\bibitem{kreps1990game}
David~M Kreps.
\newblock {\em Game theory and economic modelling}.
\newblock Oxford University Press, 1990.

\bibitem{han2012game}
Zhu Han.
\newblock {\em Game theory in wireless and communication networks: theory, models, and applications}.
\newblock Cambridge university press, 2012.

\bibitem{hazra2022applications}
Tanmoy Hazra and Kushal Anjaria.
\newblock Applications of game theory in deep learning: a survey.
\newblock {\em Multimedia Tools and Applications}, 81(6):8963--8994, 2022.

\bibitem{dual_gap}
Francisco Facchinei and Jong-Shi Pang.
\newblock {\em Finite-dimensional variational inequalities and complementarity problems}.
\newblock Springer, 2003.

\bibitem{facchinei2007generalized}
Francisco Facchinei, Andreas Fischer, and Veronica Piccialli.
\newblock On generalized nash games and variational inequalities.
\newblock {\em Operations Research Letters}, 35(2):159--164, 2007.

\bibitem{belgioioso2018projected}
Giuseppe Belgioioso and Sergio Grammatico.
\newblock Projected-gradient algorithms for generalized equilibrium seeking in aggregative games are preconditioned forward-backward methods.
\newblock {\em arXiv preprint arXiv:1803.10441}, 2018.

\bibitem{yi2018distributed}
Peng Yi and Lacra Pavel.
\newblock Distributed generalized nash equilibria computation of monotone games via double-layer preconditioned proximal-point algorithms.
\newblock {\em IEEE Transactions on Control of Network Systems}, 6(1):299--311, 2018.

\bibitem{dreves2013globalized}
Axel Dreves, Anna von Heusinger, Christian Kanzow, and Masao Fukushima.
\newblock A globalized newton method for the computation of normalized nash equilibria.
\newblock {\em Journal of Global Optimization}, 56(2):327--340, 2013.

\bibitem{dreves2011solution}
Axel Dreves, Francisco Facchinei, Christian Kanzow, and Simone Sagratella.
\newblock On the solution of the kkt conditions of generalized nash equilibrium problems.
\newblock {\em SIAM Journal on Optimization}, 21(3):1082--1108, 2011.

\bibitem{herings2010homotopy}
P~Jean-Jacques Herings and Ronald Peeters.
\newblock Homotopy methods to compute equilibria in game theory.
\newblock {\em Economic Theory}, 42(1):119--156, 2010.

\bibitem{yi2019operator}
Peng Yi and Lacra Pavel.
\newblock An operator splitting approach for distributed generalized nash equilibria computation.
\newblock {\em Automatica}, 102:111--121, 2019.

\bibitem{vinh2019inertial}
Nguyen~The Vinh and Le~Dung Muu.
\newblock Inertial extragradient algorithms for solving equilibrium problems.
\newblock {\em Acta Mathematica Vietnamica}, 44(3):639--663, 2019.

\bibitem{franci2022stochastic}
Barbara Franci and Sergio Grammatico.
\newblock Stochastic generalized nash equilibrium seeking under partial-decision information.
\newblock {\em Automatica}, 137:110101, 2022.

\bibitem{mine}
Hongbo Sun.
\newblock A path-following framework on fiber bundle for variational inequalities, 2026.

\bibitem{convex_body_approximation}
Peter~M Gruber.
\newblock {\em Convex and discrete geometry}.
\newblock Springer, 2007.

\bibitem{krantz2001function}
Steven~George Krantz.
\newblock {\em Function theory of several complex variables}, volume 340.
\newblock American Mathematical Soc., 2001.

\bibitem{analytic}
Walter Rudin.
\newblock {\em Real and complex analysis}.
\newblock McGraw-Hill Education, 1974.

\bibitem{normal_cone}
R~Tyrrell Rockafellar and Roger~JB Wets.
\newblock {\em Variational analysis}.
\newblock Springer, 1998.

\bibitem{allgower2003introduction}
Eugene~L Allgower and Kurt Georg.
\newblock {\em Introduction to numerical continuation methods}.
\newblock SIAM, 2003.

\bibitem{kolda2009tensor}
Tamara~G Kolda and Brett~W Bader.
\newblock Tensor decompositions and applications.
\newblock {\em SIAM review}, 51(3):455--500, 2009.

\end{thebibliography}

\end{document}